\documentclass[preprint,compress,3p,12pt]{elsarticle}
%\documentclass[final,1p,times]{elsarticle}
%\usepackage{lineno,hyperref}
%\modulolinenumbers[5]
\usepackage{amsfonts}
\usepackage{amsthm}
\usepackage{mathrsfs}
\usepackage{amsmath}
\usepackage{amscd}
\usepackage{amssymb}
\usepackage{graphicx}
\usepackage{epstopdf}
\usepackage{subfigure}
\usepackage{array}
\usepackage{bm}
\usepackage{subfigure}
\usepackage{color}
\usepackage{booktabs}
\renewcommand{\vec}[1]{\mbox{\boldmath \small $#1$}}
%\linespread{1.6}
%\renewcommand{\baselinestretch}{1.8}
 \usepackage{setspace}\doublespacing

\biboptions{numbers,sort&compress}

\allowdisplaybreaks \numberwithin{equation}{section}

\theoremstyle{plain}

\newtheorem{theorem}{Theorem}[section]
\newtheorem{lemma}[theorem]{Lemma}
\newtheorem{corollary}[theorem]{Corollary}
\newtheorem{remark}{Remark}[section]

\theoremstyle{definition}
\newtheorem{example}{Example}[section]
\newtheorem{definition}{Definition}[section]

\usepackage{geometry}\geometry{left=2cm,right=2cm,top=2.5cm,bottom=2.5cm}
\setlength{\footskip}{1.cm}
\usepackage[%dvipdfm,
                 % pdfstartview=FitH,
                 % CJKbookmarks=true,
                  bookmarksnumbered=true,
                  bookmarksopen=true,
                  colorlinks,
                  pdfborder=001,
                  linkcolor=green,
                  anchorcolor=green,
                  citecolor=green
                  ]{hyperref}
\journal{Elsevier}
\begin{document}

\begin{frontmatter}

\title{A  general class of linear unconditionally energy stable schemes for the gradient flows
%Unconditional energy stability and error analysis of a general class of SAV methods for the gradient flows%\tnoteref{label1}
}

% \tnotetext[label1]{This work is supported by NSFC (Grant No. 11971010).}

\author{Zengqiang Tan}
\ead{tzengqiang@163.com}
\address{Center for Applied Physics and Technology, HEDPS and LMAM,
				School of Mathematical Sciences, Peking University, Beijing 100871, P.R. China}
\author{Huazhong Tang\corref{mycorrespondingauthor}}
\cortext[cor1]{Corresponding author. Fax:~+86-10-62751801.}\ead{hztang@pku.edu.cn}
			\address{Nanchang Hangkong University, Jiangxi Province, Nanchang 330000, P.R. China;  Center for Applied Physics and Technology, HEDPS and LMAM,
				School of Mathematical Sciences, Peking University, Beijing 100871, P.R. China}

\begin{abstract}
 This paper studies a    class of linear unconditionally energy stable schemes for the gradient flows.
%  by using the scalar auxiliary variable (SAV) and the  algebraically stable general linear (GL) time discretization.
Such schemes are built on the SAV technique and %, the gradient flow equation is transformed into an equivalent form along with an equivalent energy dissipation law, and then
the  general linear  time discretization (GLTD) as well as the linearization based on the extrapolation for the nonlinear term, and
%  the GL methods are exploited for the time discretization to derive the semi-discrete SAV-GL schemes with solving the linear part implicitly and using the linearization based on the extrapolation for the nonlinear part.
 may be  arbitrarily high-order accurate and very general,   containing many  existing   SAV schemes and  new SAV schemes.
It is shown that the  semi-discrete-in-time schemes  are  unconditionally energy stable
 when the GLTD is algebraically stable, and
 are convergent  with the order of $\min\{\hat{q},\nu\}$   under the diagonal stability and some suitable regularity and    accurate  starting values, where $\hat{q}$ is the generalized stage order of the GLTD and $\nu$ denotes the number of the extrapolation points in time.
 %then the proposed SAV-GL scheme satisfies a discrete energy dissipation law in the sense of a weighted inner product and norm.
% an error analysis for the  SAV-GL schemes, which indicates under some suitable conditions that
The energy stability results can be easily extended to the  fully discrete schemes, for example, if the Fourier spectral method is employed in space when  the periodic boundary conditions are specified. Some numerical experiments on the Allen-Cahn, Cahn-Hilliard, and phase field crystal    models %with the periodic boundary conditions are provided
%The fully discrete schemes are numerically tested
are conducted to validate those theories as well as the effectiveness, the energy stability and the accuracy of our schemes. %,  where  the FFT are used.
\end{abstract}

\begin{keyword}
\texttt{}Gradient flows\sep scalar auxiliary variable \sep general linear time
discretization \sep  energy stability\sep convergence.
\end{keyword}

\end{frontmatter}

\section{Introduction}\label{intro}

Many physical problems arisen in science and engineering can be modeled by partial differential equations in the form of gradient flows, for example, the interface dynamics \cite{Anderson98,Gurtin96,Yue04}, the crystal growth \cite{Braun97,Kobayashi93,Tong01}, the tumor growth \cite{Oden10,Wise08}, the thin film \cite{Wang03,Karma98}, the polymers \cite{Fraaije03,Fraaije93,Maurits97}, the solidification \cite{Boettinger02,Karma96,Wang93} and so on.
%In general, a gradient flow is determined by not only the driving free energy, but also the
%dissipation mechanism.
The gradient flows are dynamics  determined by not only the driving free energy, but also the dissipation mechanism.
For a given free energy   $\mathcal{F}(u)$,  the gradient flows can be written as
\begin{align}  \label{1.1}
 \frac{\partial u}{\partial t} = \mathcal{G} \mu,
\end{align}
supplemented with suitable boundary conditions (e.g. with periodic or homogeneous Neumann boundary conditions) \cite{ShenJ19},
% subject to suitable boundary conditions, such as the periodic and homogeneous Neumann boundary conditions etc.,
where   $\mu = \delta \mathcal{F}/\delta u$ denotes the variational derivative of $\mathcal{F}(u)$
and
 the operator $\mathcal{G}$ is     non-positive symmetric so that
%  the dissipation mechanism
 the free energy is monotonically decreasing
\begin{align}  \label{1.2}
\frac{d\mathcal{F}(u)}{dt}  = \left(\frac{\delta \mathcal{F}}{\delta u}, \frac{\partial u}{\partial t} \right) = (\mu, \mathcal{G} \mu) \le 0,
\end{align}
here $(\cdot,\cdot)$ denotes the inner product in $L^2(\Omega)$. %, and  its induced $L^2$ norm will be denoted by $\|\cdot\|$ below.
If $\mathcal{G} = -I$ (resp. $\Delta$),  then one has the so-called $L^2$ (resp. $H^{-1}$) gradient flow.

Most of gradient flow equations are nonlinear so that it is hard to obtain their analytical solutions. Hence, studying them numerically is the primary approach.
%Designing their accurate and efficient numerical methods become very popular.
%On the other hand, the energy dissipation law \eqref{1.2} is an important property for gradient flows in physics and mathematics. Naturally, ones hope that numerical methods for gradient flows can preserve the analogous dynamical properties in the discrete sense, and then the schemes are said to be energy stable. In view of this,
Recently, designing efficient and energy stable numerical schemes for  the gradient flows has attracted much attention.
There exist  several efficient and popular  techniques to design energy stable schemes for the gradient flows.
The first is the convex splitting \cite{Elliott93,Eyre98}.
%, introduced %by Elliott and Stuart  in \cite{Elliott93} and then popularized %by Eyre
%in \cite{Eyre98}.
Based on it, one can design unconditionally energy stable and uniquely solvable schemes, but should solve a nonlinear system at each time step generally. Although the convex splitting technique has been developed case-by-case for some problems  \cite{Baskaran13,ShenJ12,WuK14}, it is not available to give an  unified formulation.
The second technique  is the stabilization  \cite{Tang06,ShenJ10b,WangL18}, which treats the nonlinear terms explicitly and adds a stabilization term to relax the time step constraint. It is  {simple} and efficient since the linear equations with constant coefficients are
solved at each time step. However, it is very challenging to design high-order unconditionally energy stable  {schemes}. Some  progresses can be found  in \cite{LiD17}.
The third technique is the invariant energy quadratization (IEQ) \cite{YangX16,YangX17a,YangX17b,YangX20}. It allows one to construct the linear and second-order unconditionally energy stable schemes for a large class of the gradient flows, but needs to solve the linear equations with  variable coefficients, and requires that the free energy density is bounded from below so that  its applications are limited for some physically interesting models, such as the molecular beam epitaxial (MBE) model without slope selection \cite{LiB03}.
The fourth technique is the  scalar auxiliary variable (SAV) \cite{ShenJ18a,ShenJ18b,ShenJ19}.
%, which results in more robust schemes with less restrictions on the energy functionals.
With the help of introducing some SAVs,
 the gradient flow model is reformulated into an equivalent form,
 then  some linear and unconditionally energy stable schemes can be developed by
approximating the reformulated system instead of the original gradient flow model.
It is convenient to construct second-order or higher-order unconditionally energy stable SAV schemes, which  need to solve several linear systems with constant coefficients at each time step.
In addition to the above, there are some other interesting techniques,
including but not limited to the exponential time differencing (ETD) \cite{WangX16,Du19},
the Lagrange multiplier \cite{Badia11,Guillen13}, the energy factorization \cite{KouJ20,WangX20},  and the averaged vector field \cite{Hairer-Lubich2014}  etc. %For more details on them, the readers are referred to the papers mentioned above and the references therein.

Up to now, the SAV technique may result in more robust schemes with less restrictions on the energy functionals %and thus becomes a powerful tool to construct unconditionally energy stable schemes
  and has been successfully applied to many existing gradient flow models  \cite{Akrivis19,Cheng18,Cheng19,Gong20,HouD19,HuangF20,ZhangC20,LiuZ20,LiuZ21,YangZ19,YangJ21}.
However, in those existing works, the time discretizations  are the backward Euler,  the Crank-Nicolson (CN), or the second-order backward differentiation formula (BDF2), except
 the high-order SAV-RK (Runge-Kutta) \cite{Akrivis19,Gong20}.
The aim of this paper  is to study a general class of  arbitrarily high-order linear unconditionally energy stable schemes  for the gradient flows.
Such schemes, abbreviated as the SAV-GL schemes  below for convenience, are based on the SAV technique and the general linear  time
discretization (GLTD) as well as the linearization based on the extrapolation for the nonlinear term.
It is worth noting that our studied SAV-GL schemes contain most of the time integration schemes (e.g. SAV-BDF1, SAV-CN, SAV-BDF2, SAV-RK etc.) for the gradient flows in literature and many  new schemes.
The GLTDs, as multistage multivalue  schemes  proposed in % by Burrage and Butcher
 \cite{Burrage80}, can be considered as a natural generalization of the  Runge-Kutta (RK) and linear multistep time discretizations,
 and are  flexible in developing numerical methods with  better stability and accuracy, except slightly complexity.
 Two special examples provided later are   the so-called one-leg \cite{Dahlquist75B,Dahlquist78} and   multistep Runge-Kutta (MRK) \cite{Burrage87,Li99} time integration  schemes.

%
%The main contribution of this paper is to combine the SAV approach with the  GL time discretization and develop a general class of unconditionally energy stable schemes for   the gradient flows.
%because numerous time discretizations can be written as the form of GL methods, including but not limiting

The rest of this paper is organized as follows. Section \ref{sec:2} briefly reviews
the GLTDs for the ordinary differential equations (ODEs) and  extends them as a  general class of semi-discrete-in-time linear schemes  for  the gradient flow equations by using the SAV
and  the
linearization based on extrapolation.
Specifically, the gradient flow equation is first changed as an equivalent form by using
 the SAV, and then    the reformulated equation is approximated by the   GLTDs,
 in which the linear part is implicitly discretized
 while the nonlinear part is explicitly and linearly dealt with the   extrapolation.
The energy stability  and  the convergence of the SAV-GL schemes are addressed in Sections \ref{sec:3} and   \ref{sec:4}, respectively.
 A discrete energy dissipation law  is obtained    in the sense of the weighted inner product
 and the norm if the GLTDs are algebraically stable,
and the convergence order in time of  $\min\{\hat{q},\nu\}$ is  derived under some suitable regularity and    accurate  starting values, if the GLTDs are diagonal stable, where $\hat{q}$ is the generalized stage order of the {GLTDs} and $\nu$ denotes the number of the extrapolation points in time.
%
 %All of the theoretical results are very general and can be reduced to some special cases of the time discretizations.
Section \ref{sec:5} introduces the Fourier spectral discretization  for the gradient flows with the periodic boundary conditions in order to conduct our numerical validation.  Section  \ref{sec:6} numerically {tests} the fully discrete SAV-GL schemes against  three widely concerned gradient flow equations
(which are the Allen-Cahn, Cahn-Hilliard, and phase field crystal   models)
in order to validate the effectiveness, energy stability and accuracy of our schemes. Some concluding remarks are given in Section \ref{sec:7}.

\section{SAV-GL schemes for gradient flows}  \label{sec:2}

This section briefly reviews the GLTDs \cite{Burrage80,ButcherAN2006,Jackiewicz09} for the ODEs and extends them to   the gradient flows by using the SAV technique \cite{ShenJ18a,ShenJ18b,ShenJ19} as a general class of linear numerical schemes, which will be abbreviated as ``SAV-GL'' below for convenience.

\subsection{A brief review to {GLTDs}}  \label{sec:2.1}
For the first-order  ODE
\begin{align}   \label{2.1.1}
\frac{d u}{dt}=u'(t)  = f(u(t)),~~ t\in(0,T],
\end{align}
subject to the initial data $u(0) = u_0$,
the GLTD is a large family of multistage multivalue  schemes for ODEs, which includes
the linear multistep,  predictor-corrector and Runge-Kutta
 schemes as special cases.
Many peoples have tried their best to  search for the useful GLTDs which do not exist within
the standard special cases, but possess as many of the advantages and as few of the disadvantages as possible. The readers are referred to the review paper \cite{ButcherAN2006} and the monograph \cite{Jackiewicz09} as well as  references therein.

%Its complexity can be specified by two integers $r$, the number of values passed between time steps, and $s$, the number of stages.
 %Its aim  is to obtain good local accuracy expressed in terms of an integer $p$, denoting {\em the order of the method}.
%In practice,  in addition to obtaining methods with a high order, one will also seek methods with a  high {\em stage-order}. %That is we want to obtain accurate values of the stage approximations from which the output results in a step are calculated. We will write $q$ for the stage-order.
%A characteristic feature is that each time step imports $r$ quantities, and exports the same quantities, updated in accordance with the development of the solution, and

Assume that the time interval $[0,T]$ is divided into $K$ equal parts
with   the time stepsize $\tau \!=\! \frac{T}{K}$,  $K\!\in\!\mathbb{Z}^+$.
The GLTDs
can be defined by %\cite{Jackiewicz09}
\begin{align}   \label{2.1.2}
\begin{cases}
%u_i^{(n)}
~~ U_{n,i}= \tau\sum \limits_{j=1}^s d_{ij}^{11} f(U_{n,i}) %\dot{u}_j^{(n)}
                  + \sum\limits_{j=1}^r d_{ij}^{12} u_j^{[n]},~~~ i =1,2,\cdots,s, \\[1 \jot]
u_i^{[n+1]} = \tau\sum\limits_{j=1}^s d_{ij}^{21} f(U_{n,i}) %\dot{u}_j^{(n)}
               + \sum\limits_{j=1}^r d_{ij}^{22} u_j^{[n]},~~~ i =1,2,\cdots,r,
\end{cases}
\end{align}
where $d_{ij}^{\imath\jmath}\in\mathbb R$, $\imath,\jmath = 1,2$, %$\dot{u}_i^{(n)} = f(u_i^{(n)})$,
$U_{n,i}$ is an approximation of
stage order $q$ to $u(t_n +c_i \tau)$, $c_i\in\mathbb R$, $i=1,\cdots,s$, and
each of   $r$ import quantities $\{u_i^{[n]}\}$  is
an approximation of {order $p\ge q$} to the linear combination of the scaled derivatives   of the solution $u$ to
\eqref{2.1.1} at  $t_n$, i.e.
$$u_i^{[n]}=\sum\limits_{j=0}^p w_{ij}\tau^j u^{(j)}(t_n)+{\mathcal O}(h^{p+1}),\ i=1,\cdots,r,\
$$
 with   some scalars $w_{ij}$ and $u^{(j)}=\frac{d^j u}{dt^j}$.
%available  are input values,  with the stage values $u_i^{(n)}$ which are computed to constitute the step, and $u_i^{[n+1]}$ are output values, which will constitute the input at step $n+1$,
%$s$ and $r$ is the number of stage values and input or output values, respectively.
%Generally, $u_i^{(n)}\approx u(t_{n,i})$ with $t_n = n\tau, t_{n,i} = t_n+c_i\tau$ and $c_i$ are real numbers, $u_i^{[n]}\approx u_i(t_n)$, which denotes the linear combination of the derivatives of the exact solution $u(t)$ up to order $q$ at the point $t_n$ and to be more precisely, $u_i(t_n)$   with $q$ being the stage order introduced later and some real parameters $w_{ij}$ for $i=1,2,\ldots,k$ and $j=0,1,\ldots,q$.
% and the external stages  $u_i^{[n]}$ contain the necessary information from previous time steps used to carry out the current step and
% $u_i^{[n]}\approx u_i(t_n)$, which denotes a piece of information about the true solution $u(t)$ of \eqref{2.1.1} and can be given explicitly for specific time discretizations (see Example \ref{exp2.1} and \ref{exp2.2}), and  $d_{ij}^{\imath\jmath}$ and $c_i$ are real numbers, $\imath,\jmath = 1,2$.
Such time discretizations are characterized by   four integers $p,q, r,s$ (being respectively the method order,  the stage order,  the number of external approximations, and
the number of stages or internal approximations),
the abscissa vector $\vec{c} = (c_1,c_2,\cdots,c_s)^T \in \mathbb{R}^s$, the vectors $\vec{w}_j = (w_{1j},w_{2j},\ldots,w_{rj})^T \in \mathbb{R}^r$, $j = 0,1,\ldots,p$, and four coefficient matrices $\vec{D}_{\imath\jmath} = ( d_{ij}^{\imath\jmath})$ for $\imath,\jmath = 1,2$,
where  $\vec{D}_{11}\in\mathbb{R}^{s\times s}, \vec{D}_{12}\in\mathbb{R}^{s\times r}, \vec{D}_{21}\in\mathbb{R}^{r\times s}$ and $\vec{D}_{22}\in\mathbb{R}^{r\times r}$.
 Obviously, the GLTDs
  \eqref{2.1.2} are consistent with \eqref{2.1.1}  if
\begin{align*}    %\label{2.1.3**}
\vec{D}_{21}\vec{e} + \vec{D}_{22}\vec{w}_1 = \vec{w}_0 + \vec{w}_1,
\ \
\vec{D}_{12}\vec{w}_0 = \vec{e},\ \vec{D}_{22}\vec{w}_0 = \vec{w}_0,
\end{align*}
where $\vec{e} = (1,1,\cdots,1)^T\in \mathbb{R}^s$.
Moreover,    for the stage consistency, one needs
\begin{align*}   % \label{2.1.3*}
\vec{D}_{11}\vec{e} + \vec{D}_{12}\vec{w}_1 = \vec{c}.
\end{align*}
%  The readers are referred to {\cite[Definition 2.2.1-2.2.3, pp. 69-70]{Jackiewicz09}} for more details about these conditions.

For the self completeness, we introduce  the definitions of  the algebraical and diagonal stabilities and the generalized stage order,  which will be used later.

\begin{definition}[{\cite[Def. 2.9.8]{Jackiewicz09}}]%Definition 2.9.8, pp.~115
\label{def2.1}
A GLTD is   algebraically stable (also called $G$-stable), if there exists a  symmetric and positive definite matrix $\vec{G} \in \mathbb{R}^{r\times r}$ and a   non-negative definite diagonal matrix $\vec{H} \in \mathbb{R}^{s\times s}$ such that the   matrix
\[  \vec{M} = \!\left[\! \begin{array}{*{4}{c}}\vec{G} - \vec{D}_{22}^T \vec{G} \vec{D}_{22} & \vec{D}_{12}^T\vec{H} - \vec{D}_{22}^T \vec{G} \vec{D}_{21} \vspace{0.1cm}\\ \vec{H}\vec{D}_{12} - \vec{D}_{21}^T \vec{G} \vec{D}_{22} & \vec{D}_{11}^T \vec{H} + \vec{H} \vec{D}_{11} - \vec{D}_{21}^T \vec{G} \vec{D}_{21} \end{array} \!\right]_{(r+s)\times(r+s)} \!,  \]
is non-negative definite.
\end{definition}
The algebraical stability is an important property, and the algebraically stable GLTDs can preserve the long time dynamics of dissipative ODEs \cite{Burrage80}. Next section will show that the algebraically stable GLTDs with SAV  may be unconditionally energy stable for the gradient flows \eqref{1.1}.

\begin{definition}[{\cite[Def. 1.2]{Li99}}]  \label{def2.2}
A GLTD is diagonally stable if there exists a   positive definite diagonal matrix $\tilde{\vec{H}} \in \mathbb{R}^{s\times s}$ such that the matrix $\tilde{\vec{H}}\vec{D}_{11} + \vec{D}_{11}^T\tilde{\vec{H}} \in \mathbb{R}^{s\times s} $ is positive definite.
\end{definition}

The condition in this definition implies the coefficient matrix $\vec{D}_{11}$  is nonsingular \cite{Li89} so that the  diagonally stable GLTD is uniquely solvable.
The diagonal stability will provide us convenience to derive the error estimates of the GLTDs.
%
%\noindent

%Moreover, the concept of generalized stage order is introduced.

\begin{definition}  [{\cite[Def. 1.5]{Li99}}]%(\textbf{see e.g.~\cite{Li99}, Definition 1.5})
\label{def2.4}
 The generalized stage order of the GLTD is $\hat{q}$, if $\hat{q}$ is the largest integer and there exist $\hat{u}_i(t_n)$, $i=1,2,\cdots,r$, such that
\[  \rho_{n,i} \!=\! \mathcal{O}(\tau^{\hat{q}+1}),~ i \!=\! 1,2,\cdots,s;~~ \rho_i^{[n]} \!=\! \mathcal{O}(\tau^{\hat{q}+1}),~u_i(t_n) \!-\! \hat{u}_i(t_n) \!=\! \mathcal{O}(\tau^{\hat{q}}),~~ i \!=\! 1,2,\cdots,r,  \]
where $u_i(t_n) := \sum\limits_{j=0}^p w_{ij}\tau^j u^{(j)}(t_n)$, $\rho_{n,i}$ and $\rho_i^{[n]}$ are the local truncation errors given by
\begin{align}  \label{2.1.4}
\begin{cases}
u(t_{n,i}) = \tau\sum \limits_{j=1}^s d_{ij}^{11} u'(t_{n,j}) + \sum\limits_{j=1}^r d_{ij}^{12} \hat{u}_j(t_n) + \rho_{n,i},~~~ i =1,2,\cdots,s, \\[1 \jot]
\hat{u}_i(t_{n+1})= \tau\sum\limits_{j=1}^s d_{ij}^{21} u'(t_{n,j}) + \sum\limits_{j=1}^r d_{ij}^{22} \hat{u}_j(t_n) + \rho_i^{[n]},~~~ i =1,2,\cdots,r,
\end{cases}
\end{align}
 here $t_{n,j}=t_n+c_j \tau$, and $u(t)$ is the smooth solution of \eqref{2.1.1}.
\end{definition}
\noindent
The generalized stage order of a GLTD is related to the stage order $q$ and the method order $p$ of the GLTD. Specifically, when a GLTD has the stage order $q$ and the method order $p=q$, taking $\hat{u}_i(t_n) = u_i(t_n)$ yields that the generalized stage order $\hat{q}$ is at least equal to $q$.

\begin{remark}
%The concept of the generalized stage order was introduced   in \cite{Li99}.
The generalized stage order of some GLTDs  is one higher than the stage order so that
it can be used to obtain a sharper error estimate \cite{Li99}. In fact, when a GLTD has the stage order $q$ and the method order $p=q+1$,   it means that
\begin{align*}%   \label{2.1.5}
\rho_{n,i} = \mathcal{O}(\tau^{q+1}),~~ i = 1,2,\cdots,s;~~~~ \rho_i^{[n]} = \mathcal{O}(\tau^{q+2}),~~ i =1,2,\cdots,r,
\end{align*}
which are defined by \eqref{2.1.4} with $\hat{u}_i(t_n) = u_i(t_n)$. If there exists a constant $\kappa$ such that
\begin{align}   \label{2.1.6}
\rho_{n,i} - \kappa\tau^{q+1} u^{(q+1)}(t_n)  = \mathcal{O}(\tau^{q+2}),~~~~i= 1,2,\cdots,s,
\end{align}
and  one     chooses
\[ \hat{u}_i(t_n)  = u_i(t_n) + w_{i0} \kappa\tau^{q+1} u^{(q+1)}(t_n), \]
then using the consistency condition yields that the generalized stage order of the GLTD is $\hat{q} = q+1$.
\end{remark}

Before ending this subsection, we introduce two typical examples of the  GLTDs.
%As far as we know, many existing schemes can be written in the form of the GL time discretization \eqref{2.1.2}, and in the following,

\begin{example}   \label{exp2.1}
The first   is the $r$-step one-leg time discretization   \cite{Dahlquist75B,Dahlquist78}, which has the following form for   \eqref{2.1.1}
\begin{align}  \label{2.1.7}
\sum_{j=0}^r \alpha_j u^{n+1-j} = \tau f\left( \sum_{j=0}^r \beta_j u^{n+1-j}\right),
\end{align}
where $u^{n+1-j}\approx u(t_{n+1-j})$, $\alpha_j, \beta_j\in \mathbb R$ satisfy
$\alpha_0\beta_0 \neq 0$  and  the   consistency conditions
\[ \sum_{j=0}^r \alpha_j = 0,~~  \sum_{j=0}^r (1\!-\!j)\alpha_j = \sum_{j=0}^r \beta_j = 1.   \]
If setting $u_i^{[n]} = u^{n+1-i}$ for $i=1,2,\ldots,r$, which can be viewed as an approximation to $u_i(t_n)$ with $\vec{w}_0 = (1,1,\ldots,1)^T \in \mathbb{R}^r$ and $\vec{w}_j = \frac{1}{j!}\big(0,(-1)^j,(-2)^j,\ldots,(1\!-\!r)^j\big)^T \in \mathbb{R}^r$ for $j=1,2,\ldots,p$, and using
\[ U_{n,1} = \sum_{j=0}^r \beta_j u^{n+1-j}  = \sum_{j=1}^r\left(\beta_j \!-\! \frac{\beta_0}{\alpha_0}\alpha_j \right) u^{n+1-j} +  \frac{\tau\beta_0}{\alpha_0} f\left(U_{n,1} \right), \]
to approximate $u(t_n+c_1\tau)$ with the consistency condition $c_1 = \sum\limits_{j=0}^r (1\!-\!j)\beta_j$, then the scheme \eqref{2.1.7}  has been reformulated as a GLTD form  \eqref{2.1.2} with the coefficient matrices $\vec{D}_{\imath\jmath}$, $\imath,\jmath = 1,2$, defined by
\[ \vec{D}_{11} \!=\! \left[\frac{\beta_0}{\alpha_0} \right]_{1\times 1},~~~ \vec{D}_{12} \!=\! \left[\beta_1\!-\!\frac{\beta_0}{\alpha_0}\alpha_1, \beta_2 \!-\! \frac{\beta_0}{\alpha_0}\alpha_2,\cdots,  \beta_{r} \!-\! \frac{\beta_0}{\alpha_0}\alpha_{r} \right]_{1\times r},  \]
\[ \vec{D}_{21} \!=\!\!  \left[\! \begin{array}{*{5}{c}}
 1\\ 0 \\  \vdots   \\ 0 \\ 0  \end{array} \!\right]_{r\times 1},~~~~ \vec{D}_{22} \!=\!\!  \left[ \! \begin{array}{*{25}{c}}
-\frac{\alpha_1}{\alpha_0} & -\frac{\alpha_2}{\alpha_0 }& \cdots & -\frac{\alpha_{r-1}}{\alpha_0} & -\frac{\alpha_{r}}{\alpha_0} \\
 1& 0  & \cdots & 0 & 0 \\ 0 & 1  &  \cdots & 0 & 0 \\ \vdots & \vdots & \ddots & \vdots & \vdots  \\ 0 & 0  & \cdots & 1 & 0 \end{array} \!\right]_{r\times r}. \]
%\textbf{\cite{Dahlquist78} (Theorem 3.3)},
The one-leg time discretization \eqref{2.1.7} is algebraically stable  if and only if it is $A$-stable,  see \cite[Theorem 3.3]{Dahlquist78}.
Besides, if \eqref{2.1.7} is $A$-stable, then   $\beta_0/\alpha_0 \!>\!0$ so that \eqref{2.1.7} is   diagonally stable.
In our numerical experiments, we will use
two   special   one-leg time discretizations.
The first  is % which has the form  for $r=1$
\begin{align}   \label{2.1.8}
u^{n+1} = u^{n} + f\left( \theta u^{n+1} + (1-\theta) u^{n} \right),
\end{align}
where $\theta$ is a parameter. When $\frac{1}{2}\le \theta \le 1$,  \eqref{2.1.8} is $A$-stable, and thus is algebraically stable and diagonally stable \cite{Dahlquist75B}.
When \eqref{2.1.8} is written as a GLTD, the four parameters $\{p,q,r,s\} =\{1,1,1,1\}$ for $\frac{1}{2}<\theta \le 1$ and $\{p,q,r,s\} =\{2,1,1,1\}$ for \eqref{2.1.8} with $\theta =\frac{1}{2}$.
The second  is a class of two-step schemes
\begin{align}   \label{2.1.9}
\frac{1\!+\!\gamma}{2}u^{n+1} =  \gamma u^{n} - \frac{\gamma\!-\!1}{2}u^{n-1} + f\left( \frac{1\!+\!\gamma\!+\!\delta}{4}u^{n+1} + \frac{1\!-\!\delta}{2} u^{n} + \frac{1\!-\!\gamma\!+\!\delta}{4}u^{n-1} \right),
\end{align}
where $\gamma$ and $\delta$ are two parameters.
If $\gamma\ge 0$ and $\delta>0$,  then \eqref{2.1.9} is $A$-stable  \cite{Dahlquist75B},  and thus is algebraically stable and diagonally stable.
Four integers $\{p,q,r,s\} =\{2,1,2,1\}$ for \eqref{2.1.9} as a GLTD. \qed
\end{example}

\begin{example}    \label{exp2.2}
Another important subclass of the GLTDs \eqref{2.1.2} are the multistep Runge-Kutta (MRK) time
discretizations, see e.g. \cite{Burrage87,Li99}.
The $s$-stage and $r$-step MRK schemes   for  \eqref{2.1.1} can be given by
\begin{align}   \label{2.1.10}
\begin{cases}
U_{n,i} = \tau \sum\limits_{j=1}^s a_{ij} f(U_{n,j}) + \sum\limits_{j=1}^r \hat{a}_{ij} u^{n+1-j},~~~ i=1,2,\cdots, s, \\[1 \jot]
u^{n+1} = \tau \sum\limits_{j=1}^s b_j f(U_{n,j}) + \sum\limits_{j=1}^r \hat{b}_j u^{n+1-j},
\end{cases}
\end{align}
 where the coefficients should satisfy  the  consistency conditions
\[  \sum_{j=1}^r \hat{b}_j = 1,~~~\sum_{j=1}^r \hat{a}_{ij} = 1,~~\mbox{for}~~ i=1,2,\cdots,s,~~~\sum_{j=1}^s b_{j} +\sum_{j=1}^r \hat{b}_j(1\!-\!j) = 1, \]
and the stage consistence condition
\[ c_i =   \sum_{j=1}^s a_{ij} + \sum_{j=1}^r (1\!-\!j)\hat{a}_{ij},~~\mbox{for}~~ i=1,2,\cdots,s.
\]
If letting  $u_i^{[n]} = u^{n+1-i}~(i=1,2,\cdots,r)$,
which implies that the vectors $\vec{w}_j$ for  \eqref{2.1.10} have the same form as that of the one-leg time discretization \eqref{2.1.7}, then \eqref{2.1.10} can be written as the form of  \eqref{2.1.2} with the   coefficient matrices $\vec{D}_{\imath\jmath}$, $\imath,\jmath = 1,2$, given by
\[ \vec{D}_{11} \!=\! \vec{A} \!=\! \left(a_{ij}\right)_{s\times s},~~ \vec{D}_{12} \!=\! \hat{\vec{A}} \!=\! \left(\hat{a}_{ij}\right)_{s\times r},~~  \vec{D}_{21} \!=\!  \left[ \begin{array}{*{2}{c}}
\vec{b} \\  \vec{0} \end{array} \right]_{r\times s}, ~~ \vec{D}_{22} \!=\!  \left[ \begin{array}{*{3}{c}}
\hat{\vec{b}} \\
 \vec{I}_{r-1} ~~ \vec{0} \end{array} \right]_{r\times r},   \]
where $\vec{0}$ denotes the zero matrix or vector with appropriate dimensions, $\vec{I}_{r-1} \in \mathbb{R}^{r-1\times r-1}$ is the identity matrix, $\vec{b} = (b_1,b_2,\ldots,b_s) \in \mathbb{R}^s$ and $\hat{\vec{b}} = (\hat{b}_1,\hat{b}_2,\ldots,\hat{b}_r) \in \mathbb{R}^r$.
%
%As for the algebraic stability of MRK time discretizations, the readers are referred to \cite{Li99} for details and
There are six classes of the MRK time discretizations, which are algebraically stable
and diagonally stable with $\vec{G} = \mbox{diag}\big(\hat{b}_1,\hat{b}_1+\hat{b}_2,\cdots,\sum\limits_{i=1}^r\hat{b}_i\big) \!\in \mathbb{R}^{r\times r}$
and $ \vec{H} = \mbox{diag}(\vec{b})\in\mathbb{R}^{s\times s}$, see \cite{Li99}. %[Theorem 2.6]{Li99}. %(\textbf{cf. \cite{Li99}, Theorem 2.6}).
%Moreover, they are also diagonally stable, see \cite[Theorem 2.8]{Li99}. %(\textbf{cf. \cite{Li99}, Theorem 2.8}).
Particularly, when $r = 1$,  \eqref{2.1.10}  reduces to the standard   RK  time discretization (see e.g. \cite{Hairer}), and is algebraically stable if and only if $b_i \ge 0$, $i=1,2,\ldots,s$, and the matrix $\bar{\vec{M}}=(b_ia_{ij} + b_j a_{ji} - b_ib_j)\in\mathbb{R}^{s\times s}$
%with entries $\bar{m}_{ij} =b_ia_{ij} + b_j a_{ji} - b_ib_j ~(i,j = 1,2,\cdots,s)$
is nonnegative definite. % (cf. {\cite[Definition 12.5]{Hairer}}). %(see e.g. \cite{Hairer}, \textbf{Definition 12.5, pp.182}).
Popular families of the  MRK schemes are the Gauss and Radau IIA time integrations
(four integers $\{p,q,r,s\}$  are $\{2s,s,1,s\}$ and $\{2s\!-\!1,s,1,s\}$, respectively), which are
algebraically stable and diagonally stable. % whose coefficients are given in {\cite[Section IV, pp. 72--74]{Hairer}}. %\textbf{\cite{Hairer} (Section IV, pp. 72--74)}.
Moreover, the MRK time discretization \eqref{2.1.10} has the stage $q$ and the method order $p$, when the following simplified conditions {\cite[pp. 363--364]{Hairer}} %(\textbf{see e.g. \cite{Hairer}, pp. 363--364}):
\begin{align}
\label{2.1.11}
& B(p):~~~ l\sum_{j=1}^s b_{j}c_j^{l-1}  +\sum_{j=1}^r \hat{b}_j(1\!-\!j)^l = 1,~~~ l =1,2,\cdots, p; \\
\label{2.1.12}
& C(q):~~~l\sum_{j=1}^s a_{ij} c_j^{l-1} + \sum_{j=1}^r \hat{a}_{ij} (1\!-\!j)^l = c_i^l ,~~~ l = 1,2,\cdots,q,~i=1,2,\cdots,s,
\end{align}
hold.
\qed\end{example}

\subsection{Semi-discrete SAV-GL schemes}   \label{sec:2.2}

Generally, the free energy $\mathcal{F}(u)$ can be split   as
\begin{align}   \label{2.2.3}
  \mathcal{F}(u) = \frac{1}{2} ( \mathcal{L} u, u) + \mathcal{F}_1(u),
  \ \mathcal{F}_1(u) = \int_{\Omega}F(u)d\mathbf{x},
\end{align}
where  $\Omega$ is a bounded open domain, $\mathcal{L}$ is a symmetric non-negative  linear self-adjoint elliptic operator,     $F(u)$ is
a nonlinear potential function, and $\mathcal{F}_1(u)$ is   bounded from below, i.e., $\mathcal{F}_1(u) \ge -C_0 >0$.
%In terms of this kind of energy splitting \eqref{2.2.3},
If introducing the SAV $z(t) := \sqrt{\mathcal{F}_1(u)+C_0}$, then one can rewrite the gradient flow equation \eqref{1.1} as follows
\begin{align}\label{2.2.4}
\begin{aligned}
\frac{\partial u}{\partial t} &= \mathcal{G} \mu,~~ \mu = \mathcal{L} u + z W(u), \\%[2 \jot]
\frac{d z}{dt} &= \frac{1}{2} \left( W(u), \frac{\partial u}{\partial t}\right),
\ W(u):= \frac1{z(t)}
\frac{\delta \mathcal{F}_1}{\delta u},
\end{aligned}
\end{align}
with periodic or homogeneous Neumann boundary conditions.
It is easy to check that \eqref{2.2.4} satisfies the   energy dissipation law
\begin{align*}
\frac{dE}{dt} = \left(\mathcal{L}u, \frac{\partial u}{\partial t}\right) + 2z \frac{dz}{dt} = \left(\mathcal{L}u + z W(u), \frac{\partial u}{\partial t}\right) = \left(\mu, \mathcal{G}\mu \right)  \le 0,
\end{align*}
where the reformulated free energy $E(u)$ is given by
\begin{align*}  %\label{2.2.5}
E(u) = \frac{1}{2}(\mathcal{L}u, u) + z^2 - C_0 \equiv \frac{1}{2} \left(\mathcal{L}u,u\right) + \mathcal{F}_1(u).
\end{align*}
%Then we can construct a general class of numerical schemes for solving the reformulated equation ,
The   energy splitting \eqref{2.2.3} is not unique, so is the SAV $z(t)$.
For example, another energy splitting with a linear and self-adjoint operator was considered in \cite{Gong20}.

For convenience, the gradient flows in this paper are assumed to satisfy suitable boundary conditions so that all boundary terms vanish when integration by parts is performed, such as the periodic boundary conditions or homogeneous Neumann boundary conditions. For any $u,v$ satisfying that kind of boundary conditions on the boundary $\partial\Omega$, the property $(\mathcal{L}u,v) = (u,\mathcal{L}v)$ holds. Specifically, for the case of $\mathcal{L} = -\Delta$, a simple calculation shows  $ (-\Delta u,v) = (u,-\Delta v)$ when $u,v$ are periodic or $\frac{\partial u}{\partial \vec{n}} = \frac{\partial v}{\partial \vec{n}} = 0$, where  $\vec{n}$ is the unit outward normal vector on  $\partial\Omega$.

The semi-discrete SAV-GL schemes are built on discretizing the
reformulated gradient flow equations \eqref{2.2.4} by using the GLTDs
\eqref{2.1.2} in time, and we will show that the positive semi-definiteness of  $\mathcal{L}$ plays an important role to derive their energy stability.
%
%\begin{remark}
%The   energy splitting \eqref{2.2.3} is not unique, so is the SAV $z(t)$.
%since different energy splitting can be chosen.
%For example, %In this paper, the given free energy is split into \eqref{2.2.3} with linear and symmetric non-negative operator $\mathcal{L}$,
% an energy splitting with linear and self-adjoint operator was considered in \cite{Gong20}.
%We remark that the positive semi-definiteness of operator $\mathcal{L}$ plays an important role to derive the energy stability of the schemes, because the schemes are very general and contain many other time discretizaztions.
%\end{remark}
%
%In the following,  is exploited for the reformulated equation \eqref{2.2.4} in time to derive the semi-discrete SAV-GL scheme.
Assume that the quantities $u_i^{[n]}$ and $z_i^{[n]}$ are given, $i=1,2,\cdots,r$.
Extending the GLTDs %time discretizaztion
\eqref{2.1.2}   to the system \eqref{2.2.4} yields
% and then the following scheme for solving  the reformulated equation \eqref{2.2.4} are considered:
\begin{align}  \label{2.2.6}
\begin{cases}
U_{n,i} = \tau \sum\limits_{j=1}^s d_{ij}^{11} \dot{U}_{n,j} + \sum\limits_{j=1}^r d_{ij}^{12} u_j^{[n]},  \\[2 \jot]
Z_{n,i} = \tau \sum\limits_{j=1}^s d_{ij}^{11} \dot{Z}_{n,j} + \sum\limits_{j=1}^r d_{ij}^{12} z_j^{[n]},~~~  i =1,2,\cdots,s,
\end{cases}
\end{align}
\begin{align}   \label{2.2.8}
\begin{cases}
u_i^{[n+1]} = \tau \sum\limits_{j=1}^s d_{ij}^{21} \dot{U}_{n,j} + \sum\limits_{j=1}^r d_{ij}^{22} u_j^{[n]},  \\[2 \jot]
z_i^{[n+1]} = \tau \sum\limits_{j=1}^s d_{ij}^{21} \dot{Z}_{n,j} + \sum\limits_{j=1}^r d_{ij}^{22} z_j^{[n]},~~~ i =1,2,\cdots,r.
\end{cases}
\end{align}
where
\begin{align}  \label{2.2.7}
\dot{U}_{n,i} :=  \mathcal{G} \mu_{n,i},\
\mu_{n,i} = \mathcal{L} U_{n,i} \!+\! Z_{n,i} W(\bar{U}_{n,i}), \
\dot{Z}_{n,i} := \frac{1}{2} \left( W(\bar{U}_{n,i}),  \dot{U}_{n,i}\right),\ i=1,2,\cdots,s,
\end{align}
and  $\bar{U}_{n,i}$ denotes an explicit approximation to $u(\cdot,t_{n,i})$.
%Note that the quantities $\dot{U}_{n,i}$ and $\dot{Z}_{n,i}$ in \eqref{2.2.6} are introduced for notational convenience only.
%Practically, substituting the expressions of $\dot{U}_{n,i}$ and $\dot{Z}_{n,i}$ into \eqref{2.2.6}, a coupled system
%for $(U_{n,i}, Z_{n,i})~(i=1,2,\cdots,s)$ can be given.
Since   $W(\bar{U}_{n,i})$ are   explicitly evaluated,
  \eqref{2.2.6} forms a system of linear equations for the unknown variables $U_{n,i}$ and $Z_{n,i}$, $i=1,2,\cdots,s$,
%
%Once $(U_{n,i}, Z_{n,i})~(i=1,2,\cdots,s)$ are determined, we can derive the quantities $(\dot{U}_{n,i},\dot{Z}_{n,i})~(i=1,2,\cdots,s)$ explicitly from \eqref{2.2.7}. Particularly, when the coefficient matrix $\vec{D}_{11}$ is invertible, the values $(\dot{U}_{n,i},\dot{Z}_{n,i})~(i=1,2,\cdots,s)$ also can be obtained by the scheme \eqref{2.2.6}. Using these derived values, we can further compute $(u_i^{[n+1]}, z_i^{[n+1]} )~(i=1,2,\cdots,r)$ by the following equations:
%
%Hence, the scheme \eqref{2.2.8} combined with \eqref{2.2.6}-\eqref{2.2.7} can be used to solve the reformulated equation \eqref{2.2.4}.
so that the SAV-GL schemes \eqref{2.2.6}-\eqref{2.2.8} can be  efficiently implemented.

 \begin{remark}
The schemes \eqref{2.2.6}-\eqref{2.2.8} are built on the original SAV technique (cf. \cite{ShenJ18a,ShenJ18b,ShenJ19}). Up to now, there exist some extensions of the SAV technique, such as the E-SAV \cite{LiuZ20,LiuZ21}, the G-SAV \cite{HuangF21}, the relaxed SAV \cite{JiangM22}, and the relaxed generalized SAV techniques \cite{ZhangY22} etc. One  can combine the GLTDs with those techniques for solving the gradients flows \eqref{1.1}.
\end{remark}

\begin{remark}
We derive   $\bar{U}_{n,i}$   by using a $\nu$-point extrapolation   with the possibly known values $U_{n-1,i}$, $i=1,2,\cdots,s$, and $u_i^{[n]}$, $i=1,2,\cdots,r$,
where  $\nu \le s+r$ and $n>1$,
and  get   $U_{0,i}$, $i=1,2,\cdots,s$, and $u_i^{[1]}, z_i^{[1]}$, $i=1,2,\cdots,r$,
  by using the nonlinear version of the SAV-GL schemes, that is, \eqref{2.2.6}-\eqref{2.2.8}
 with specifying $\bar{U}_{1,i}={U}_{1,i}$. %,   see also Remark \ref{remark3.3}.
Several specific $\nu$-point extrapolations will be given in Section \ref{sec:3.2}.
\end{remark}

%{\color{red}
%\begin{remark}
%For the implementation of the schemes \eqref{2.2.6}-\eqref{2.2.8},  several linear equations \eqref{2.2.12} need to be solved at each time step. If the GLTDs are diagonally stable, the coefficient matrix of the linear equation \eqref{2.2.12} is invertible, and thus the SAV-GL schemes \eqref{2.2.6}-\eqref{2.2.8} admit a unique solution. Specifically, considering the diagonally stable one-leg and one-stage MRK time disretizations, the matrix $\vec{D}_{11}$ is a positive constant, then the coefficient matrix of \eqref{2.2.12} is positive definite, which implies the SAV-GL schemes \eqref{2.2.6}-\eqref{2.2.8} are unique solvable. Besides, when the coefficients of the schemes \eqref{2.2.6}-\eqref{2.2.8} are chosen as the Gauss and Radau IIA time  integrations, which are diagonally stable, then the schemes are also unique solvable.
%The readers are referred to \cite{Calvo99,Crouzeix83} for  the details on the unique solvability of the RK methods.
%\end{remark}
%}

\section{Unconditional energy stability}    \label{sec:3}
This section studies  the energy stability of the semi-discrete SAV-GL schemes \eqref{2.2.6}-\eqref{2.2.8}. %can preserve energy decay property in discrete sense. T
To fix our discussion, similar to \cite{ShenJ19} etc.,
 we assume from here to the hereafter that the boundary conditions are either periodic or such that it allows for integration by parts without introducing
additional boundary terms.

 \subsection{Unconditional energy stability of SAV-GL  schemes}    \label{sec:3.1}
%
%Based on the weighted inner product and norms introduced above, the following energy stability of the schemes \eqref{2.2.6}-\eqref{2.2.8} can be obtained.

\begin{theorem}  \label{Thm3.1}
If the GLTDs  \eqref{2.1.2} are algebraically stable
with a  symmetric and positive definite matrix $\vec{G}\!=\! (g_{ij}) \!\in\! \mathbb{R}^{r\times r}$,
%and
%a non-negative definite diagonal matrix $\vec{H} \!=\! \mbox{diag}([h_1,h_2,\cdots,h_s]) \!\in\! \mathbb{R}^{s\times s}$ such that the matrix $\vec{M} \!=\! (m_{ij}) \!\in\! \mathbb{R}^{(r+s)\times(r+s)}$ in Definition \ref{def2.1} is non-negative definite,
then
the   schemes \eqref{2.2.6}-\eqref{2.2.8}  satisfy the following energy decay property \begin{align}   \label{3.1}
\frac{1}{2} \left( \mathcal{L}\vec{u}^{[n+1]}, \vec{u}^{[n+1]}\right)_{\vec{G}} + \left\| \vec{z}^{[n+1]}\right\|_{\vec{G}}^2  \le
\frac{1}{2} \left( \mathcal{L}\vec{u}^{[n]}, \vec{u}^{[n]}\right)_{\vec{G}} + \left\| \vec{z}^{[n]}\right\|_{\vec{G}}^2,
\end{align}
where  $\vec{u}^{[n]} = \left(u_1^{[n]}, u_2^{[n]},\cdots, u_r^{[n]} \right)^T$,
$\vec{z}^{[n]} = \left(z_1^{[n]}, z_2^{[n]},\cdots, z_r^{[n]} \right)^T$, and
%For a real, symmetric and positive definite matrix $ {\vec{G}} =( {g}_{ij}) \in \mathbb{R}^{r\times r}$, the following weighted inner product and norms are introduced
\[
(\mathcal{L}\vec{\Phi},\vec{\Psi})_{ {\vec{G}}} \!:=\! \!\sum_{i,j=1}^r \! {g}_{ij} (\mathcal{L}\phi_i, \psi_j),
\|\vec{\Phi}\|_{ {\vec{G}}}  \!:=\! (\vec{\Phi},\vec{\Psi})_{ {\vec{G}}}^{1/2} \!\!=\!  \!\left(\sum_{i,j=1}^r \! {g}_{ij} (\phi_i, \psi_j)\!\!\right)^{1/2}\!,
\|\vec{\sigma}\|_{{\vec{G}}} \!:=  \!\!\left(\sum_{i,j=1}^r \! {g}_{ij} \sigma_i \sigma_j\!\!\right)^{1/2}\!,
 \]
for any $\vec{\Phi} \!=\! (\phi_1,\phi_2,\cdots,\phi_r)^T$, $\vec{\Psi} \!=\! (\psi_1,\psi_2,\cdots,\psi_r)^T\in \big( L^2(\Omega)\big)^r$, and $\vec{\sigma} \!= \! (\sigma_1, \sigma_2,  \cdots, \sigma_r)^T \!\in\! \mathbb{R}^r$.
\end{theorem}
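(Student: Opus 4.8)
The plan is to mimic the classical $G$-stability proof for one-leg methods (Dahlquist) and its generalization to general linear methods, but carried out on the SAV-reformulated system where the nonlinear coupling is handled exactly by the antisymmetry structure. The first step is to test the internal-stage equations \eqref{2.2.6}--\eqref{2.2.7} appropriately. Taking the inner product of the $U$-stage equation with a suitable multiple of $\dot U_{n,i}$ (specifically with weights coming from the matrix $\vec H$ and the relation $\mu_{n,i}=\mathcal L U_{n,i}+Z_{n,i}W(\bar U_{n,i})$), and of the $Z$-stage equation with $2\dot Z_{n,i}$, then summing over $i=1,\dots,s$ with the $\vec H$-weights, I expect the nonlinear terms to cancel: the contribution $\bigl(Z_{n,i}W(\bar U_{n,i}),\dot U_{n,i}\bigr)$ appearing from $\mu_{n,i}$ is exactly matched, via $\dot Z_{n,i}=\tfrac12(W(\bar U_{n,i}),\dot U_{n,i})$, by $2 Z_{n,i}\dot Z_{n,i}$. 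What survives is $\sum_i h_{ii}(\mathcal L U_{n,i},\dot U_{n,i}) + 2\sum_i h_{ii} Z_{n,i}\dot Z_{n,i}$ on one side and, on the other, expressions of the form $\tau\sum h_{ii}(\mu_{n,i},\mathcal G\mu_{n,i})\le 0$ plus cross terms that feed into the external quantities.

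The second step is the bookkeeping that turns this into \eqref{3.1}. Write the stage relations and the update relations \eqref{2.2.8} in block-matrix form, collecting $\dot U_{n,\cdot}$ (and $\dot Z_{n,\cdot}$) and the external vectors $\vec u^{[n]},\vec u^{[n+1]}$ (and $\vec z$). The key algebraic identity is that, for the quadratic form defined by $\vec G$, one has
\begin{align*}
\bigl(\mathcal L\vec u^{[n]},\vec u^{[n]}\bigr)_{\vec G} - \bigl(\mathcal L\vec u^{[n+1]},\vec u^{[n+1]}\bigr)_{\vec G}
= \text{(quadratic form in $\vec u^{[n]}$ and $\tau\dot U_{n,\cdot}$ with matrix $\vec M$)} + 2\tau\sum_i h_{ii}\,(\text{dissipation})_i,
\end{align*}
where $\vec M$ is exactly the $(r+s)\times(r+s)$ matrix of Definition \ref{def2.1}. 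This is where the precise structure of $\vec M$ is used: the blocks $\vec G-\vec D_{22}^T\vec G\vec D_{22}$, $\vec D_{12}^T\vec H-\vec D_{22}^T\vec G\vec D_{21}$, and $\vec D_{11}^T\vec H+\vec H\vec D_{11}-\vec D_{21}^T\vec G\vec D_{21}$ arise respectively from the pure-external part, the external/internal cross part, and the pure-internal part after substituting $u_i^{[n+1]}$ from \eqref{2.2.8}. The same identity holds verbatim with $\mathcal L$ replaced by the identity (and $u$ by $z$), since $\mathcal L$ enters only as a symmetric positive semidefinite bilinear form and commutes with all the scalar matrix manipulations; adding $\tfrac12$ times the $\mathcal L$-version to the $z$-version reproduces the left-hand side of \eqref{3.1}. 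Algebraic stability gives $\vec M\succeq 0$, so the $\vec M$-quadratic form is $\ge 0$, $\vec H\succeq 0$ makes the dissipation sum $\le 0$ work in our favor (it is subtracted), and we conclude the decay.

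The main obstacle I anticipate is the careful handling of the operator-valued (rather than scalar) nature of the quantities: the $U_{n,i}$ and $u_i^{[n]}$ are functions in $L^2(\Omega)$, not real numbers, so the quadratic forms are really sums of $L^2$ inner products weighted by matrix entries, and I must verify that the matrix identity underlying $\vec M$ survives this ``tensoring with $L^2$'' — which it does precisely because $\mathcal L$ is self-adjoint and non-negative and because $\mathcal G$ is symmetric non-positive, so that every step that would be ``multiply two scalars'' becomes ``take an $L^2$ inner product,'' preserving signs. A secondary technical point is ensuring the nonlinear cancellation is exact: one must check that the extrapolated argument $\bar U_{n,i}$ appears identically in the $\mu_{n,i}$ term and in the $\dot Z_{n,i}$ definition, so that $Z_{n,i}W(\bar U_{n,i})$ contracted with $\dot U_{n,i}$ is literally $2Z_{n,i}\dot Z_{n,i}$ with no leftover; this is guaranteed by \eqref{2.2.7} as written, so no smallness or consistency of the extrapolation is needed for stability. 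Everything else — expanding $\|\vec z^{[n+1]}\|_{\vec G}^2$, collecting terms, invoking $\vec M\succeq0$ and $\vec H\succeq0$ — is routine linear algebra once the block structure is set up.
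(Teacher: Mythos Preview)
Your proposal is correct and follows essentially the same approach as the paper's proof: both derive the algebraic identity expressing $(\mathcal L\vec u^{[n+1]},\vec u^{[n+1]})_{\vec G}-(\mathcal L\vec u^{[n]},\vec u^{[n]})_{\vec G}$ as the $\vec M$-quadratic form plus $2\tau\sum_i h_i(\mathcal L U_{n,i},\dot U_{n,i})$, use $\vec M\succeq 0$ and the positive semidefiniteness of $\mathcal L$ to drop the quadratic form, repeat verbatim for $z$, and then exploit the exact SAV cancellation $(Z_{n,i}W(\bar U_{n,i}),\dot U_{n,i})=2Z_{n,i}\dot Z_{n,i}$ together with $(\mu_{n,i},\mathcal G\mu_{n,i})\le 0$ to conclude. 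The paper carries out the block-matrix expansion explicitly (its equations \eqref{3.2}--\eqref{3.8}), but the structure and ingredients are exactly what you describe.
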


\begin{proof} %In terms of the Definition \ref{def2.1},
  Because the GLTDs are algebraically stable in the sense of Definition \ref{def2.1},
  there exist a symmetric positive definite matrix $\vec{G}$ and a non-negative definite diagonal matrix $\vec{H} \!=\! \mbox{diag}([h_1,h_2,\cdots,h_s]) \!\in\! \mathbb{R}^{s\times s}$ such that the matrix $\vec{M} \!=\! (m_{ij}) \!\in\! \mathbb{R}^{(r+s)\times(r+s)}$ is non-negative definite.
  %For the matrices $\vec{G}$ and $\vec{H}$,
Using both the first equations in \eqref{2.2.6} and \eqref{2.2.8} gives
%{\color{red}
\begin{align}     \label{3.2}
&  \left( \mathcal{L}\vec{u}^{[n+1]}, \vec{u}^{[n+1]}\right)_{\vec{G}} -  \left( \mathcal{L}\vec{u}^{[n]}, \vec{u}^{[n]}\right)_{\vec{G}} - 2\tau \sum_{i=1}^s h_i \left(\mathcal{L}U_{n,i}, \dot{U}_{n,i}\right)  \nonumber \\
=\;& \sum_{i,j}^r g_{ij} \left( \mathcal{L}u_i^{[n+1]}, u_j^{[n+1]} \right)
 - \sum_{i,j}^r g_{ij} \left( \mathcal{L}u_i^{[n]}, u_j^{[n]} \right)
 - 2\tau \sum_{i=1}^s h_i \left(\mathcal{L}U_{n,i}, \dot{U}_{n,i}\right) \nonumber  \\
%=\;& \sum_{i,j}^r g_{ij} \left( \mathcal{L}\left[ \tau\sum_{k=1}^s d_{ik}^{21}\dot{U}_{n,k} + \sum_{k=1}^r d_{ik}^{22} u_k^{[n]}\right], \tau\sum_{k=1}^s d_{jk}^{21}\dot{U}_{n,k} + \sum_{k=1}^r d_{jk}^{22} u_k^{[n]} \right)   \nonumber  \\
%& - \sum_{i,j}^r g_{ij} \left( \mathcal{L}u_i^{[n]}, u_j^{[n]} \right) - 2\tau \sum_{i=1}^s h_i \left(\mathcal{L}\left[ \tau\sum_{k=1}^s d_{ik}^{11}\dot{U}_{n,k} + \sum_{k=1}^r d_{ik}^{12} u_k^{[n]}\right], \dot{U}_{n,i}\right) \nonumber  \\
=\;& - \sum_{i,j}^r g_{ij} \left( \mathcal{L}u_i^{[n]}, u_j^{[n]} \right) +  \sum_{i,j}^r g_{ij} \left( \sum_{k=1}^r d_{ik}^{22} \mathcal{L}u_k^{[n]}, \sum_{k=1}^r d_{jk}^{22} u_k^{[n]} \right)   \nonumber  \\
& - 2\tau \sum_{i=1}^s h_i \left( \sum_{k=1}^r d_{ik}^{12} \mathcal{L} u_k^{[n]}, \dot{U}_{n,i}\right) + 2\tau\sum_{i,j}^r g_{ij} \left( \sum_{k=1}^r d_{ik}^{22} \mathcal{L}u_k^{[n]}, \sum_{k=1}^s d_{jk}^{21}\dot{U}_{n,k} \right)   \nonumber  \\
& - 2\tau^2 \sum_{i=1}^s h_i \left(\sum_{k=1}^s d_{ik}^{11}\mathcal{L}\dot{U}_{n,k} , \dot{U}_{n,i}\right) + \tau^2\sum_{i,j}^r g_{ij} \left( \sum_{k=1}^s d_{ik}^{21}\mathcal{L}\dot{U}_{n,k} , \sum_{k=1}^s d_{jk}^{21}\dot{U}_{n,k}  \right)   \nonumber  \\
=\; & - \sum_{i,j}^r p_{ij} \left( \mathcal{L}u_i^{[n]}, u_j^{[n]} \right) - 2\tau \sum_{i=1}^r\sum_{j=1}^s s_{ij} \left( \mathcal{L}u_i^{[n]}, \dot{U}_{n,j} \right) - \tau^2 \sum_{i,j}^s q_{ij} \left( \mathcal{L}\dot{U}_{n,i}, \dot{U}_{n,j} \right),
\end{align}  %}
where the property $\left( \mathcal{L} u_i^{[n]}, \dot{U}_{n,j}\right) = \left( u_i^{[n]}, \mathcal{L} \dot{U}_{n,j}\right)$ has been used in the second equality,
$p_{ij}$, $s_{ij}$ and $q_{ij}$ form the matrices $\vec{P}, \vec{S}$ and $\vec{Q}$, respectively, and satisfy
\[ \vec{P} = \vec{G}-\vec{D}_{22}^T \vec{G} \vec{D}_{22},~~~ \vec{S} = \vec{D}_{12}^T \vec{H} - \vec{D}_{22}^T \vec{G} \vec{D}_{12},~~~ \vec{Q} = \vec{D}_{11}^T \vec{H} + \vec{H} \vec{D}_{11} - \vec{D}_{21}^T \vec{G} \vec{D}_{21}.  \]
If setting
\[  \tilde{\vec{U}}_n = \left(u_1^{[n]},u_2^{[n]},\cdots,u_r^{[n]},\tau \dot{U}_{n,1},\tau \dot{U}_{n,2}, \cdots,  \tau \dot{U}_{n,s} \right)^T, \]
then the identity \eqref{3.2} can be rewritten as
\begin{align}     \label{3.3}
\left( \mathcal{L}\vec{u}^{[n+1]}, \vec{u}^{[n+1]}\right)_{\vec{G}} -  \left( \mathcal{L}\vec{u}^{[n]}, \vec{u}^{[n]}\right)_{\vec{G}} = 2\tau \sum_{i=1}^s h_i \left(\mathcal{L}U_{n,i}, \dot{U}_{n,i}\right)  -  \sum_{i,j}^{r+s} m_{ij} \left( \mathcal{L}\tilde{U}_{n,i},\tilde{U}_{n,j} \right).
\end{align}
Since the matrix $\vec{M}$ is non-negative definite and the operator $\mathcal{L}$ is positive semi-definite, it holds from \eqref{3.3} that
\begin{align}     \label{3.4}
\left( \mathcal{L}\vec{u}^{[n+1]}, \vec{u}^{[n+1]}\right)_{\vec{G}} \le  \left( \mathcal{L}\vec{u}^{[n]}, \vec{u}^{[n]}\right)_{\vec{G}} + 2\tau \sum_{i=1}^s h_i \left(\mathcal{L}U_{n,i}, \dot{U}_{n,i}\right).
\end{align}
Similarly, from both the second equations in \eqref{2.2.6} and \eqref{2.2.8}, one can  obtain
\begin{align*}  %   \label{3.5}
\left\| \vec{z}^{[n+1]}\right\|_{\vec{G}}^2 \le  \left\|\vec{z}^{[n]}\right\|_{\vec{G}}^2 + 2\tau \sum_{i=1}^s h_i Z_{n,i} \dot{Z}_{n,i}.
\end{align*}
Inserting the third equation of \eqref{2.2.7} into it gives
\begin{align}     \label{3.6}
\left\| \vec{z}^{[n+1]}\right\|_{\vec{G}}^2 \le \left\|\vec{z}^{[n]}\right\|_{\vec{G}}^2 + \tau \sum_{i=1}^s h_i Z_{n,i}\left( W(\bar{U}_{n,i}),  \dot{U}_{n,i}\right).
\end{align}
Moreover, taking the $L^2$ inner product of the second equation in \eqref{2.2.7} with $\dot{U}_{n,i}$ gives
\begin{align*}%   \label{3.7}
\left(\mathcal{L} U_{n,i}, \dot{U}_{n,i} \right)  = \left(\mu_{n,i} - Z_{n,i} W(\bar{U}_{n,i}), \dot{U}_{n,i}\right).
\end{align*}
Substituting it into \eqref{3.4} yields
\begin{align}     \label{3.8}
\left( \mathcal{L}\vec{u}^{[n+1]}, \vec{u}^{[n+1]}\right)_{\vec{G}} \le  \left( \mathcal{L}\vec{u}^{[n]}, \vec{u}^{[n]}\right)_{\vec{G}} + 2\tau \sum_{i=1}^s h_i \left(\mu_{n,i} \!-\! Z_{n,i} W(\bar{U}_{n,i}), \dot{U}_{n,i}\right).
\end{align}
Combining \eqref{3.6} with \eqref{3.8} and using $\dot{U}_{n,i} = \mathcal{G}\mu_{n,i}$ yields
\begin{align*}
& \frac{1}{2} \left( \mathcal{L}\vec{u}^{[n+1]}, \vec{u}^{[n+1]}\right)_{\vec{G}} + \left\| \vec{z}^{[n+1]}\right\|_{\vec{G}}^2  \le
\frac{1}{2} \left( \mathcal{L}\vec{u}^{[n]}, \vec{u}^{[n]}\right)_{\vec{G}} + \left\| \vec{z}^{[n]}\right\|_{\vec{G}}^2 + \tau \sum_{i=1}^s h_i \left(\mu_{n,i}, \dot{U}_{n,i}\right)  \nonumber \\
\le\; & \frac{1}{2} \left( \mathcal{L}\vec{u}^{[n]}, \vec{u}^{[n]}\right)_{\vec{G}} + \left\| \vec{z}^{[n]}\right\|_{\vec{G}}^2 + \tau \sum_{i=1}^s h_i \left(\mu_{n,i}, \mathcal{G}\mu_{n,i}\right).
\end{align*}
Since $h_i>0 $ for $i=1,2,\cdots,s$ and the operator $\mathcal{G}$ is
non-positive, we can deduce that the   schemes \eqref{2.2.6}-\eqref{2.2.8} satisfy the desired energy decay property \eqref{3.1}. The proof is completed.%\hspace{10.5cm}
\end{proof}

\begin{remark}
%It follows from Theorem \ref{Thm3.1}  that the SAV-GL schemes \eqref{2.2.6}-\eqref{2.2.8} satisfy the energy decay property in the sense of the weighted norm.
The $\vec{G}$-weighted norm of  $\vec{\Phi} \!=\! (\phi_1,\phi_2,\cdots,\phi_r)^T\in\big(L^2(\Omega)\big)^r$ is equivalent to its $L^2$ norm, since $\lambda_{min} \sum\limits_{i=1}^r \|\phi_i\|^2 \le \|\vec{\Phi}\|_G ^2 \le \lambda_{max}\sum\limits_{i=1}^r \|\phi_i\|^2$, where $\lambda_{min}$ and $\lambda_{max}$ are the minimum and maximum eigenvalues of the matrix $\vec{G}$, respectively.
\end{remark}

\begin{remark}  \label{remark3.3}
%It is worth noting that %evident from the proof of Theorem \ref{Thm3.1} that
 The discrete energy decay property does still hold  for the nonlinear version of  the SAV-GL schemes, i.e. \eqref{2.2.6}-\eqref{2.2.8} with   the unknown stage  values $U_{n,i}$  instead of the extrapolated values $\bar{U}_{n,i}$.
%Hence, we can apply the nonlinear version of the SAV-GL schemes to give the starting values $U_{0,i}~(i=1,2,\cdots,s)$, $u_i^{[1]}$ and $z_i^{[1]}~(i=1,2,\cdots,r)$, which is necessary and accurate enough for the implementation of the scheme \eqref{2.2.6}-\eqref{2.2.8}. This paper adopts the nonlinear version of SAV-RK schemes (cf. \cite{Gong20}) to compute the starting values.
\end{remark}

\subsection{Applications to the one-leg and MRK methods}    \label{sec:3.2}

%Based on the energy stability of the semi-discrete SAV-GL schemes \eqref{2.2.6}-\eqref{2.2.8},
 This subsection discusses   the one-leg and MRK time discretizations
for  the gradient flows.

First of all, applying the one-step one-leg time discretization \eqref{2.1.8}
 with $\theta \in [\frac{1}{2}, 1]$ to the reformulated equation \eqref{2.2.4}  gives the following semi-discrete SAV-GL scheme
\begin{align} \label{3.2.1}
\begin{cases}
u^{n+1} - u^n = \tau\mathcal{G}\mu^{n+\theta},  \\[2 \jot]
\mu^{n+\theta} = \mathcal{L}\left[ \theta u^{n+1} \!+\! (1\!-\!\theta) u^n\right] + \left[ \theta z^{n+1} \!+\! (1\!-\!\theta) z^n\right] W(\bar{u}^{n+\theta}),  \\[2 \jot]
z^{n+1} \!-\! z^{n} = \frac{1}{2} \left( W(\bar{u}^{n+\theta}), u^{n+1} \!-\! u^{n} \right),
\end{cases}
\end{align}
where   $\bar{u}^{n+\theta} := (2\!-\!\theta)u^n \!-\! (1\!-\!\theta)u^{n-1}$    approximating explicitly $u(\cdot,t_{n+\theta})$. As mentioned in Section \ref{sec:2}, the one-leg time discretization \eqref{2.1.8} can be rewritten as a GLTD, and is algebraically stable with   $\vec{G} \!=\! 1$ and  $\vec{H} \!=\! 1$ for any $\theta \in [\frac{1}{2}, 1]$, see \cite[Th. 3.3]{Dahlquist75B},
so that  %(cf. \cite{Dahlquist75B}, \textbf{Theorem 3.3}).
  \eqref{3.2.1} is unconditionally energy stable   by Theorem \ref{Thm3.1}.
in the sense of that
%\begin{theorem}   \label{Thm3.2}
%If $\frac{1}{2}\le \theta \le 1$, then the scheme \eqref{3.2.1} is unconditionally energy stable in the sense that it satisfies the following energy decay property
\begin{align*}%   \label{3.2.3}
\frac{1}{2} \left( \mathcal{L}u^{n+1}, u^{n+1}\right) + \left( z^{n+1}\right)^2  \le
\frac{1}{2} \left( \mathcal{L}u^{n}, u^{n}\right) + \left( z^{n}\right)^2,
\end{align*}
which can also be directly deduced %without using the algebraic stability but
by
%We remark that instead of using the algebraic stability of the time discretization  \eqref{2.1.8}, the energy stability result \eqref{3.2.3} also can be derived.
taking the $L^2$ inner product of the first and second equations in \eqref{3.2.1} with $\mu^{n+\theta}$ and $u^{n+1}-u^n$, respectively, and multiplying the third equation in \eqref{3.2.1} with $2\left[ \theta z^{n+1} + (1\!-\!\theta) z^n\right]$, and then using the inequality $-ab\ge -\frac{1}{2}(a^2+b^2)$.

\begin{remark}
When $\theta = 1$ and $\frac{1}{2}$,    \eqref{3.2.1} becomes the SAV-BDF1 and SAV-CN scheme in \cite{ShenJ19}, respectively.
%This paper gives a general form of them and shows that the scheme is unconditionally energy stable for any $\frac{1}{2}\le \theta \le 1$.
\end{remark}

Next, applying  the two-step one-leg time discretization \eqref{2.1.9} with
 $\gamma\ge 0$ and $\delta>0$ to   the reformulated equation \eqref{2.2.4} yields the following
 SAV-GL scheme
\begin{align}
\begin{cases}   \label{3.2.9}
\frac{1+\gamma}{2}u^{n+1} - \gamma u^{n} + \frac{\gamma-1}{2}u^{n-1} = \tau\mathcal{G}\mu^{n+\gamma-1},\\[2 \jot]
\mu^{n+\gamma-1} = \mathcal{L}\!\left[ \frac{1+\gamma+\delta}{4}u^{n+1} + \frac{1-\delta}{2} u^{n} + \frac{1-\gamma+\delta}{4}u^{n-1}\right] \\[2 \jot]
\hspace{1.3cm}~ + \left[ \frac{1+\gamma+\delta}{4}z^{n+1} + \frac{1-\delta}{2} z^{n} + \frac{1-\gamma+\delta}{4}z^{n-1}\right]  W(\bar{u}^{n+\gamma-1}), \\[2 \jot]
\frac{1+\gamma}{2}z^{n+1} -\gamma z^{n} + \frac{\gamma-1}{2} z^{n-1}= \frac{1}{2} \left( W(\bar{u}^{n+\gamma-1}),  \frac{1+\gamma}{2}u^{n+1} -\gamma u^{n} + \frac{\gamma-1}{2} u^{n-1} \right),
\end{cases}
\end{align}
where $\bar{u}^{n+\gamma-1} := \left(1+\frac{\gamma}{2}\right)u^{n} - \frac{\gamma}{2}u^{n-1}$   approximating  $u(\cdot, t_{n+\gamma-1})$.
As far as we know, there is no result on the energy stability of the general scheme \eqref{3.2.9} in the literature except for some special cases.
Here, it may be conveniently obtained by using  Theorem  \ref{Thm3.1} and the fact {\cite[Th. 3.3]{Dahlquist75B}} that  the time discretization  \eqref{2.1.9}
with $\gamma\ge 0$ and $\delta>0$ is algebraically stable  with $\vec{H} = 1$ and
\[ \vec{G} =   \frac{1}{4}\!\left[\! \begin{array}{*{4}{c}}
(1+\gamma)^2+\delta & 1-\delta-\gamma^2 \\ 1-\delta-\gamma^2 & (\gamma-1)^2 + \delta \end{array} \!\right]. \]

\begin{theorem}  \label{Thm3.3}
%If $\gamma\ge 0$ and $\delta>0$, then
The semi-discrete SAV-GL scheme \eqref{3.2.9} is unconditionally energy stable in the sense that %it satisfies the following energy decay property:
\begin{align}   \label{3.2.12}
\tilde{\mathcal{F}}\left(u^{n+1},u^{n},z^{n+1},z^{n} \right) \le \tilde{\mathcal{F}}\left(u^{n},u^{n-1},z^{n},z^{n-1} \right),
\end{align}
where
\begin{align*}
& \tilde{\mathcal{F}}\left(u^{n+1},u^{n},z^{n+1},z^{n} \right): = \frac{\delta}{2(\gamma\!-\!1)^2 \!+\! 2\delta} \left( \mathcal{L}u^{n+1}, u^{n+1}\right) + \frac{\delta}{(\gamma\!-\!1)^2 \!+\! \delta}\left( z^{n+1}\right)^2  \\
& \hspace{1.5cm} + \frac{(\gamma\!-\!1)^2\!+\!\delta}{8}\left( \mathcal{L}\left( \frac{\gamma^2\!+\!\delta\!-\!1}{(\gamma\!-\!1)^2\!+\!\delta} u^{n+1} \!-\! u^{n}\right), \left( \frac{\gamma^2\!+\!\delta\!-\!1}{(\gamma\!-\!1)^2\!+\!\delta} u^{n+1} \!-\! u^{n}\right)\right) \\
&\hspace{1.5cm}  + \frac{(\gamma\!-\!1)^2\!+\!\delta}{4} \left( \frac{\gamma^2\!+\!\delta\!-\!1}{(\gamma\!-\!1)^2\!+\!\delta} z^{n+1} \!-\! z^{n}\right)^2.
\end{align*}
\end{theorem}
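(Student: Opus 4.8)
The plan is to obtain \eqref{3.2.12} as a direct corollary of the general estimate \eqref{3.1} of Theorem \ref{Thm3.1}, together with an explicit diagonalization of the $2\times2$ weight matrix $\vec{G}$. First I would recall from Example \ref{exp2.1} that writing the two-step one-leg discretization \eqref{2.1.9} as a GLTD has $r=2$ with import quantities $u_1^{[n]}=u^n$, $u_2^{[n]}=u^{n-1}$ and, analogously, $z_1^{[n]}=z^n$, $z_2^{[n]}=z^{n-1}$, and that for $\gamma\ge0$, $\delta>0$ it is algebraically stable with $\vec{H}=1$ and the stated matrix $\vec{G}$. A one-line computation gives $\det\vec{G}=\delta/4>0$, while $g_{11}=\tfrac14\big((1+\gamma)^2+\delta\big)>0$, so $\vec{G}$ is symmetric positive definite; hence Theorem \ref{Thm3.1} applies and yields
\[
\tfrac12\big(\mathcal{L}\vec{u}^{[n+1]},\vec{u}^{[n+1]}\big)_{\vec{G}}+\big\|\vec{z}^{[n+1]}\big\|_{\vec{G}}^2
\le \tfrac12\big(\mathcal{L}\vec{u}^{[n]},\vec{u}^{[n]}\big)_{\vec{G}}+\big\|\vec{z}^{[n]}\big\|_{\vec{G}}^2,
\]
with $\vec{u}^{[n]}=(u^n,u^{n-1})^T$ and $\vec{z}^{[n]}=(z^n,z^{n-1})^T$.

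The remaining task is to recognize that the two sides of this inequality are precisely $\tilde{\mathcal{F}}$ evaluated at consecutive time levels. To this end I would complete the square in the quadratic form $\vec{\sigma}^T\vec{G}\vec{\sigma}$: setting $b=g_{22}$, $\lambda=-g_{12}/g_{22}$ and $a=\det\vec{G}/g_{22}$ one has the identity $\vec{\sigma}^T\vec{G}\vec{\sigma}=a\sigma_1^2+b(\lambda\sigma_1-\sigma_2)^2$, and with the present $\vec{G}$ these coefficients are $a=\delta/\big((\gamma-1)^2+\delta\big)$, $b=\big((\gamma-1)^2+\delta\big)/4$ and $\lambda=(\gamma^2+\delta-1)/\big((\gamma-1)^2+\delta\big)$. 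Applying this identity with $\vec{\sigma}=(u^{n+1},u^n)^T$ inside $(\mathcal{L}\cdot,\cdot)$ and with $\vec{\sigma}=(z^{n+1},z^n)^T$ inside the Euclidean inner product, and keeping the factor $\tfrac12$ on the $\mathcal{L}$-part, shows term by term that $\tfrac12(\mathcal{L}\vec{u}^{[n+1]},\vec{u}^{[n+1]})_{\vec{G}}+\|\vec{z}^{[n+1]}\|_{\vec{G}}^2=\tilde{\mathcal{F}}(u^{n+1},u^n,z^{n+1},z^n)$; the same identity at level $n$ gives the right-hand side, and \eqref{3.2.12} follows.

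I expect the argument to be essentially routine once Theorem \ref{Thm3.1} is invoked; the only place needing care is the bookkeeping, namely verifying $\det\vec{G}=\delta/4$ (which simultaneously certifies that $\vec{G}\in\mathbb{R}^{2\times2}$ is positive definite and that $a>0$) and checking that the completed-square coefficients $a$, $b$, $\lambda$ reproduce verbatim the four, somewhat intricate, coefficients appearing in the definition of $\tilde{\mathcal{F}}$. As an alternative, one could give a self-contained proof mirroring the direct computation indicated after \eqref{3.2.1}: pair the first equation of \eqref{3.2.9} with $\mu^{n+\gamma-1}$, the second with $\tfrac{1+\gamma}{2}u^{n+1}-\gamma u^n+\tfrac{\gamma-1}{2}u^{n-1}$, multiply the third by $2\big[\tfrac{1+\gamma+\delta}{4}z^{n+1}+\tfrac{1-\delta}{2}z^n+\tfrac{1-\gamma+\delta}{4}z^{n-1}\big]$, add the three relations, use $(\mu^{n+\gamma-1},\mathcal{G}\mu^{n+\gamma-1})\le0$, and rewrite the resulting quadratic forms in $(u^{n+1},u^n,u^{n-1})$ and $(z^{n+1},z^n,z^{n-1})$ via the $\vec{G}$-identity so that they telescope; but appealing to Theorem \ref{Thm3.1} is clearly the shortest route.
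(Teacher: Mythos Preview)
Your proposal is correct and follows essentially the same route as the paper: invoke Theorem~\ref{Thm3.1} with the explicit $2\times2$ matrix $\vec{G}$ attached to the two-step one-leg method \eqref{2.1.9}, and then read off the specific modified energy $\tilde{\mathcal{F}}$. The paper in fact states the result without writing out the passage from $\tfrac12(\mathcal{L}\vec{u}^{[n]},\vec{u}^{[n]})_{\vec{G}}+\|\vec{z}^{[n]}\|_{\vec{G}}^2$ to the four-term expression for $\tilde{\mathcal{F}}$; your completing-the-square computation (with $a=\det\vec{G}/g_{22}$, $b=g_{22}$, $\lambda=-g_{12}/g_{22}$) supplies exactly that missing bookkeeping and the coefficients check out.
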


\begin{remark}
When  $\gamma = 2$ and $\delta = 1$, the scheme \eqref{3.2.9}   reduces to the SAV-BDF2 in \cite{ShenJ19},
and the energy inequality \eqref{3.2.12} is the same as that in \cite{ShenJ19},   deduced with a different technique. % (algebraic stability).
When   $\gamma \!=\! 2\theta$ and $\delta \!=\! 1\!-\!4(1\!-\!\theta)^2$, the scheme \eqref{3.2.9}  reduces to that   in \cite{YangZ19}    for the Cahn-Hilliard equation, where
 the energy stability is analyzed by using some identities.
\end{remark}

\begin{remark}
Since the highest method order of the $A$-stable (algebraically stable) one-leg time discretizations is 2, this paper only considers the first- and second-order one-leg schemes.
It is interesting to explore higher-order one-leg schemes for the gradient flows with the aid of the novel SAV approach  \cite{HuangF20}. %, which remains further research in future.
\end{remark}

Finally,   the high-order   algebraically stable MRK time discretizations   \eqref{2.1.10} with the stage order $q=s$ and the method order $p=s$ are   applied to  the reformulated equation \eqref{2.2.4}.
We first evaluate the stage quantities $U_{n,i}, Z_{n,i}$, $i=1,2,\cdots,s$, by the coupled linear system
\begin{align}  \label{3.2.13}
\begin{cases}
U_{n,i} = \tau \sum\limits_{j=1}^s a_{ij} \dot{U}_{n,j} + \sum\limits_{j=1}^r \hat{a}_{ij} u^{n+1-j},   \\[2 \jot]
Z_{n,i} = \tau \sum\limits_{j=1}^s a_{ij} \dot{Z}_{n,j} + \sum\limits_{j=1}^r \hat{a}_{ij} z^{n+1-j},  ~~~ i =1,2,\cdots,s,
\end{cases}
\end{align}
and then calculate the output quantities $ u^{n+1}, z^{n+1}$  by
\begin{align}   \label{3.2.15}
\begin{cases}
u^{n+1} = \tau \sum\limits_{j=1}^s b_{j} \dot{U}_{n,j} + \sum\limits_{j=1}^r \hat{b}_{j} u^{n+1-j}, \\[2 \jot]
z^{n+1} = \tau \sum\limits_{j=1}^s b_{j} \dot{Z}_{n,j} + \sum\limits_{j=1}^r \hat{b}_{j} z^{n+1-j}.
\end{cases}
\end{align}
where
\begin{align*}%    \label{3.2.14}
\dot{U}_{n,i} = \mathcal{G} \mu_{n,i},~~~ \mu_{n,i} = \mathcal{L} U_{n,i} + Z_{n,i} W(\bar{U}_{n,i}),~~~ \dot{Z}_{n,i} = \frac{1}{2} \left( W(\bar{U}_{n,i}),  \dot{U}_{n,i}\right),
\end{align*}
and   $\bar{U}_{n,i}$   is evaluated by using the following  Lagrange interpolation
%with the values $\{U_{n-1,i}$, $i=1,2,\cdots,s\}$ and given as follows
\begin{align*} % \label{3.2.16}
\bar{U}_{n,i} = \sum_{j=1}^s L_j(1+c_i) U_{n-1,j},\quad
L_j(x) = \prod_{l=1,l\neq j}^s  \frac{x-c_l}{c_j-c_l}, ~~\mbox{for}~~ s\ge 2,
\end{align*}
and
\[ \bar{U}_{n,1}  = (1+c_1) u^n - c_1 u^{n-1},~~~ \mbox{for}~~ s = 1.  \]

As mentioned in Section \ref{sec:2}, the MRK time discretizations \eqref{2.1.10} belong to the GLTDs so that using  Theorem \ref{Thm3.1} can give the energy stability of the schemes \eqref{3.2.13}-\eqref{3.2.15}.

\begin{theorem}  \label{Thm4.2.1}
If the MRK time discretizations \eqref{2.1.10} are algebraically stable, then
the SAV-GL schemes \eqref{3.2.13}-\eqref{3.2.15} satisfy %the following energy decay property
\begin{align}   \label{3.2.17}
\bar{\mathcal{F}}\!\left( u^{n+1},\cdots,u^{n+2-r},z^{n+1},\cdots,z^{n+2-r}\right)  \le \bar{\mathcal{F}}\!\left( u^{n},\cdots,u^{n+1-r},z^{n},\cdots,z^{n+1-r}\right)\!,
\end{align}
where
\begin{align*}
& \bar{\mathcal{F}}\left( u^{n},\cdots,u^{n+1-r},z^{n},\cdots,z^{n+1-r}\right)   \\
= \;& \frac{1}{2}\sum_{j=1}^r\hat{b}_j \left(\mathcal{L}u^{n},u^{n}\right) + \cdots +\frac{\hat{b}_1}{2} \left(\mathcal{L}u^{n+1-r},u^{n+1-r}\right) + \sum_{j=1}^r\hat{b}_j \left(z^{n}\right)^2 + \cdots +\hat{b}_1 \left(z^{n+1-r}\right)^2.
\end{align*}
In particular, the SAV-RK schemes \eqref{3.2.13}-\eqref{3.2.15}  satisfy
\begin{align}   \label{3.2.18}
\frac{1}{2} \left(\mathcal{L}u^{n+1},u^{n+1}\right) + \left(z^{n+1}\right)^2 \le \frac{1}{2} \left(\mathcal{L}u^{n},u^{n}\right) + \left(z^{n}\right)^2.
\end{align}
\end{theorem}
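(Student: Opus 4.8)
The plan is to recognize that the MRK schemes \eqref{3.2.13}--\eqref{3.2.15} are a special instance of the general SAV-GL schemes \eqref{2.2.6}--\eqref{2.2.8}, via the correspondence established in Example \ref{exp2.2} (identifying $u_i^{[n]} = u^{n+1-i}$, $z_i^{[n]} = z^{n+1-i}$ for $i=1,\dots,r$ and the coefficient matrices $\vec{D}_{11}=\vec{A}$, $\vec{D}_{12}=\hat{\vec{A}}$, $\vec{D}_{21}$, $\vec{D}_{22}$ as given there). Since Example \ref{exp2.2} also records that the algebraically stable MRK discretizations admit the explicit choice $\vec{G} = \mathrm{diag}\big(\hat{b}_1,\hat{b}_1+\hat{b}_2,\dots,\sum_{i=1}^r\hat{b}_i\big)$ and $\vec{H}=\mathrm{diag}(\vec{b})$, I would simply invoke Theorem \ref{Thm3.1} with this particular $\vec{G}$. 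Then the abstract energy inequality \eqref{3.1} reads
\begin{align*}
\frac{1}{2}\left(\mathcal{L}\vec{u}^{[n+1]},\vec{u}^{[n+1]}\right)_{\vec{G}} + \left\|\vec{z}^{[n+1]}\right\|_{\vec{G}}^2 \le \frac{1}{2}\left(\mathcal{L}\vec{u}^{[n]},\vec{u}^{[n]}\right)_{\vec{G}} + \left\|\vec{z}^{[n]}\right\|_{\vec{G}}^2,
\end{align*}
and it remains to translate the $\vec{G}$-weighted quantities back into the $u^{n+1-j}$, $z^{n+1-j}$ notation.

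The computational heart of the argument is thus the identity
\begin{align*}
\left(\mathcal{L}\vec{u}^{[n]},\vec{u}^{[n]}\right)_{\vec{G}} = \sum_{i,j=1}^r g_{ij}\left(\mathcal{L}u^{n+1-i},u^{n+1-j}\right) = \sum_{j=1}^r\Big(\sum_{i=1}^r\hat b_i\Big)(\mathcal{L}u^n,u^n)+\cdots+\hat b_1(\mathcal{L}u^{n+1-r},u^{n+1-r}),
\end{align*}
where I use that $\vec{G}$ is diagonal with entries $g_{ii}=\sum_{k=1}^{i}\hat b_k$, so only the diagonal terms survive and each $(\mathcal{L}u^{n+1-i},u^{n+1-i})$ picks up the coefficient $\sum_{k=1}^{i}\hat b_k$. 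An analogous computation handles $\|\vec{z}^{[n]}\|_{\vec{G}}^2 = \sum_{i=1}^r g_{ii}(z^{n+1-i})^2$. Substituting these into \eqref{3.1} yields exactly $\bar{\mathcal{F}}(u^{n+1},\dots,u^{n+2-r},z^{n+1},\dots,z^{n+2-r}) \le \bar{\mathcal{F}}(u^n,\dots,u^{n+1-r},z^n,\dots,z^{n+1-r})$, noting that the index shift $\vec{u}^{[n+1]}=(u^{n+1},\dots,u^{n+2-r})^T$ matches the left-hand side of \eqref{3.2.17}.

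For the special case $r=1$ (the SAV-RK schemes), the matrices collapse: $\vec{D}_{22}=[\hat b_1]=[1]$ by consistency, $\vec{G}=[\hat b_1]=[1]$, and the weighted inner product $(\mathcal{L}\vec{u}^{[n]},\vec{u}^{[n]})_{\vec{G}}$ reduces to the plain $(\mathcal{L}u^n,u^n)$ while $\|\vec z^{[n]}\|_{\vec G}^2 = (z^n)^2$; substituting into \eqref{3.1} gives \eqref{3.2.18} immediately. The only mild subtlety, and the step I would be most careful about, is bookkeeping: making sure the $r$-to-$1$ relabeling $u_i^{[n]}\leftrightarrow u^{n+1-i}$ is applied consistently on both time levels so that the telescoping of $\bar{\mathcal F}$ across steps is genuine (i.e. the output functional at step $n$ is literally the input functional at step $n+1$), and confirming that the explicit $\vec G$ quoted from \cite{Li99} is indeed the positive definite matrix furnished by algebraic stability for all six admissible MRK families — but since Example \ref{exp2.2} already asserts this, the proof is essentially a direct specialization of Theorem \ref{Thm3.1} with no new analytic obstacle.
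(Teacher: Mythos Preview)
Your approach is exactly what the paper does: it simply says ``using Theorem \ref{Thm3.1} can give the energy stability of the schemes \eqref{3.2.13}--\eqref{3.2.15}'' and records the resulting functional, so your plan to specialize Theorem \ref{Thm3.1} via the MRK identification of Example \ref{exp2.2} with the diagonal $\vec{G}=\mathrm{diag}\big(\hat b_1,\hat b_1+\hat b_2,\dots,\sum_{i=1}^r\hat b_i\big)$ is precisely the intended argument.

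One bookkeeping slip to fix: your text correctly computes that $(\mathcal{L}u^{n+1-i},u^{n+1-i})$ receives the weight $g_{ii}=\sum_{k=1}^i\hat b_k$, hence $u^n$ (the $i=1$ slot) gets weight $\hat b_1$ and $u^{n+1-r}$ (the $i=r$ slot) gets weight $\sum_{k=1}^r\hat b_k$, but your displayed line writes these two weights in the opposite order. Make sure the final functional you write down is consistent with the $\vec{G}$ you are actually using; the paper's statement of $\bar{\mathcal F}$ lists the coefficients in decreasing order (from $\sum_j\hat b_j$ down to $\hat b_1$), so double-check the index correspondence $u_i^{[n]}\leftrightarrow u^{n+1-i}$ against the diagonal entries of $\vec{G}$ when you match the two.
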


\begin{remark}
The schemes \eqref{3.2.13}-\eqref{3.2.15} contain the arbitrarily high-order (in time) schemes in \cite{Gong20}, which  were derived by
combining the structure-preserving Gaussian collocation time discretization with the SAV approach. Also, the extrapolated RK-SAV schemes derived in \cite{Akrivis19} for solving Allen-Cahn and Cahn-Hilliard equations were covered by the schemes \eqref{3.2.13}-\eqref{3.2.15}.
\end{remark}

\section{Error estimates of the SAV-GL methods}   \label{sec:4}

This section establishes the error estimates of the SAV-GL schemes \eqref{2.2.6}-\eqref{2.2.8} for   the $L^2$ gradient flow, i.e., $\mathcal{G} = -I$ with the  free energy density $F(u)$ in polynomial.  The analysis for the $H^{-1}$ gradient flow is quite similar
and omitted here  to avoid a repetitive discussion.
Our analyses will be based on the following hypothesises: \\
$\mathcal{H}_1$: The exact solutions $ u,z $ of the reformulated equation \eqref{2.2.4} is bounded and smooth enough, and $W(u)$ are locally Lipschitz continuous; \\
$\mathcal{H}_2$: The starting values $U_{0,i}, Z_{0,i}, u_i^{[1]}$ and $z_i^{[1]}$ are sufficiently accurate with  the generalized stage order $q$.
%\[  \sum_{i=1}^r \!\left(\left\|u_i(t_1) \!- u_i^{[1]}\right\|^2 \!+ \left|z_i(t_1) \!- z_i^{[1]}\right|^2\right)\! + \tau \sum_{i=1}^s \!\left(\left\|u(t_{0,i}) \!- U_{0,i}\right\|^2 \!+ \left|z(t_{0,i}) \!- Z_{0,i}\right|^2\right) \!\le C_0 \tau^{2q},  \]
% where $C_0$ is a positive constant independent on time stepsize $\tau$, and $q$ denotes the generalized stage order of the GL time discretization \eqref{2.1.2}.

The readers are referred to    \cite{ShenJ18b}
for some discussions on the smoothness and bound of  $u,z$ of  \eqref{2.2.4} in  the hypothesis $\mathcal{H}_1$. %,  depend on the regularity of original system.
%Moreover, the following error estimates are limited to the fact that
The  polynomial   $F(u)$  is local Lipschitz continuous, so is $W(u)$.
The  hypothesis $\mathcal{H}_2$ is reasonable, since the nonlinear  arbitrarily high order  SAV-RK schemes
 %(see e.g. \cite{Gong20})
 can be used to compute the starting values, see also Remark \ref{remark3.3}.

\subsection{Local error analysis}

This subsection  estimates the local errors of the SAV-GL scheme \eqref{2.2.6}-\eqref{2.2.8}, where the local errors $\eta_{n,i}, \eta_i^{[n]}, \sigma_{n,i}$ and $\sigma_i^{[n]}$ are determined by
\begin{align}  \label{4.1}
\begin{cases}
u_{n,i} = \tau \sum\limits_{j=1}^s d_{ij}^{11} \dot{u}_{n,j} + \sum\limits_{j=1}^r d_{ij}^{12} \hat{u}_j(t_n) + \eta_{n,i},  \\[2 \jot]
z_{n,i} = \tau \sum\limits_{j=1}^s d_{ij}^{11} \dot{z}_{n,j} + \sum\limits_{j=1}^r d_{ij}^{12} \hat{z}_j(t_n) + \sigma_{n,i},~~~  i =1,2,\cdots,s,
\end{cases}
\end{align}
and
\begin{align}   \label{4.2}
\begin{cases}
\hat{u}_i(t_{n+1}) = \tau \sum\limits_{j=1}^s d_{ij}^{21} \dot{u}_{n,j} + \sum\limits_{j=1}^r d_{ij}^{22} \hat{u}_j(t_n) + \eta_i^{[n]}, \\[2 \jot]
\hat{z}_i(t_{n+1}) = \tau \sum\limits_{j=1}^s d_{ij}^{21} \dot{z}_{n,j} + \sum\limits_{j=1}^r d_{ij}^{22} \hat{z}_j(t_n) + \sigma_i^{[n]},~~~ i =1,2,\cdots,r,
\end{cases}
\end{align}
 here
\begin{align}  \label{4.3}
& \dot{u}_{n,i} = - \mathcal{L} u_{n,i} - z_{n,i} W(\bar{u}_{n,i}), ~~~~ \dot{z}_{n,i} = \frac{1}{2} \left( W(\bar{u}_{n,i}),  \dot{u}_{n,i}\right),
\end{align}
and $u_{n,i} = u(\cdot,t_{n,i}), z_{n,i} = z(t_{n,i})$,  $\hat{u}_i(t_{n})$ and $\hat{z}_i(t_{n})$ are abstract functions and may be equal to $u_i(\cdot,t_n)$ and $z_i(t_n)$, respectively, which denote the linear combination of  the scaled derivatives of  $u,z$ of    \eqref{2.2.4}, and $\bar{u}_{n,i}$ is the $\nu$-point extrapolation with the quantity $u_{n-1,i}$ and $\hat{u}_i(t_n)$.
%
%For the local errors, the following estimates can be obtained.

\begin{lemma}  \label{Lam4.1}
Under the hypothesis $\mathcal{H}_1$, if the GLTDs \eqref{2.1.2} have the generalized stage order $\hat{q}$, then the local errors $\eta_{n,i}, \eta_i^{[n]}, \sigma_{n,i}$ and $\sigma_i^{[n]}$ satisfy
\begin{align}   \label{4.4}
\sum_{i=1}^s \left( \left\|\eta_{n,i}\right\| + \left|\sigma_{n,i} \right|\right) \le C \tau^{\min\{\hat{q}+1,\nu+1\}},~~ \sum_{i=1}^r \left( \left\|\eta_i^{[n]} \right\| + \left|\sigma_i^{[n]} \right|\right) \le C \tau^{\min\{\hat{q}+1,\nu+1\}},
\end{align}
where $C>0$ used above and hereafter is a constant independent on the time stepsize $\tau$.
\end{lemma}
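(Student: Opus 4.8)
\textbf{Proof proposal for Lemma \ref{Lam4.1}.}

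The plan is to separate the two distinct sources of error in \eqref{4.1}--\eqref{4.2}: the local truncation error of the underlying GLTD applied to the exact trajectory of the ODE system \eqref{2.2.4}, and the perturbation caused by replacing the exact nonlinear evaluation $W(u_{n,i})$ (or $W(u(\cdot,t_{n,i}))$) by the $\nu$-point extrapolated value $W(\bar u_{n,i})$. First I would introduce intermediate quantities $\tilde\eta_{n,i},\tilde\sigma_{n,i},\tilde\eta_i^{[n]},\tilde\sigma_i^{[n]}$ defined exactly as in \eqref{4.1}--\eqref{4.2} but with the residual slopes $\dot u_{n,i},\dot z_{n,i}$ replaced by the genuine time derivatives $u'(t_{n,i}),z'(t_{n,i})$ of the exact solution of \eqref{2.2.4} (equivalently, by $-\mathcal L u_{n,i}-z_{n,i}W(u(\cdot,t_{n,i}))$ and $\tfrac12(W(u(\cdot,t_{n,i})),u'(t_{n,i}))$). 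By Definition \ref{def2.4}, the choice $\hat u_i(t_n),\hat z_i(t_n)$ realizing the generalized stage order $\hat q$ makes these satisfy $\|\tilde\eta_{n,i}\|+|\tilde\sigma_{n,i}|=\mathcal O(\tau^{\hat q+1})$ and $\|\tilde\eta_i^{[n]}\|+|\tilde\sigma_i^{[n]}|=\mathcal O(\tau^{\hat q+1})$; this is exactly \eqref{2.1.4} read componentwise, applied separately to the $u$-component and the $z$-component, using that $u,z$ are smooth and bounded by $\mathcal H_1$ (the spatial $L^2$ norms of the derivatives are finite because $u$ is smooth enough in space as well).

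The second step is to control the difference $\eta_{n,i}-\tilde\eta_{n,i}$ and $\sigma_{n,i}-\tilde\sigma_{n,i}$, which by \eqref{4.1} and \eqref{4.3} equals $\tau\sum_j d_{ij}^{11}(\dot u_{n,j}-u'(t_{n,j}))$ and the analogous $z$-expression. Writing out $\dot u_{n,j}-u'(t_{n,j})=-z_{n,j}\big(W(\bar u_{n,j})-W(u(\cdot,t_{n,j}))\big)$ and using the local Lipschitz continuity of $W$ from $\mathcal H_1$ together with the boundedness of $z_{n,j}$ and of $u$ (so the relevant arguments stay in a fixed compact set), this is bounded by $C\tau\sum_j\|\bar u_{n,j}-u(\cdot,t_{n,j})\|$. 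Since $\bar u_{n,i}$ is a $\nu$-point Lagrange-type extrapolation in time built from exact-solution values $u_{n-1,i}$ and $\hat u_i(t_n)$ (which are themselves $\mathcal O(\tau^q)$, and in fact $\mathcal O(\tau^{\hat q})$ after the generalized-stage-order correction, close to point values of the smooth $u$), standard interpolation-error estimates give $\|\bar u_{n,j}-u(\cdot,t_{n,j})\|=\mathcal O(\tau^{\nu})$, hence $\|\eta_{n,i}-\tilde\eta_{n,i}\|=\mathcal O(\tau^{\nu+1})$; the same reasoning, using additionally $u'(t_{n,j})=\mathcal O(1)$ in the $\dot z_{n,j}$ term, gives $|\sigma_{n,i}-\tilde\sigma_{n,i}|=\mathcal O(\tau^{\nu+1})$, and the $d_{ij}^{21}$-weighted combinations give the same bound for $\eta_i^{[n]}-\tilde\eta_i^{[n]}$ and $\sigma_i^{[n]}-\tilde\sigma_i^{[n]}$. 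Combining the two steps by the triangle inequality yields $\mathcal O(\tau^{\hat q+1})+\mathcal O(\tau^{\nu+1})=\mathcal O(\tau^{\min\{\hat q+1,\nu+1\}})$, which is \eqref{4.4}.

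The main obstacle I anticipate is the bookkeeping around the extrapolation operator $\bar u_{n,i}$: one must verify that the $\nu$-point formula is genuinely an $\mathcal O(\tau^\nu)$-accurate extrapolation to the off-grid point $t_{n,i}=t_n+c_i\tau$ when its nodes are a mixture of stage abscissae at the previous step (the $U_{n-1,j}$, i.e. the $u(\cdot,t_{n-1}+c_j\tau)$) and the external nodes encoded in $\hat u_i(t_n)$, and that the $\mathcal O(\tau^{\hat q})$ discrepancy between $\hat u_i(t_n)$ and true point values of $u$ does not degrade this order — it does not, because those nodal perturbations are multiplied by bounded Lagrange weights and, after being pushed through the $\tau\sum_j d_{ij}^{11}(\cdot)$ prefactor, contribute at order $\tau^{\hat q+1}$, absorbed into the first term. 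A secondary point to be careful about is that $W$ is only \emph{locally} Lipschitz, so one invokes $\mathcal H_1$ to confine all arguments ($u(\cdot,t_{n,j})$ and the extrapolants $\bar u_{n,j}$, which lie within $\mathcal O(\tau^\nu)$ of them for $\tau$ small) to a fixed bounded neighbourhood on which a uniform Lipschitz constant is available; everything else is routine with constants depending only on the GLTD coefficients, the Lipschitz constant, and bounds on $u,z$ and finitely many of their derivatives.
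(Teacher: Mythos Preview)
Your proposal is correct and follows essentially the same approach as the paper: both split each local error as the GLTD truncation error $\tilde\eta_{n,i}$ (of order $\tau^{\hat q+1}$ by Definition~\ref{def2.4}) plus the perturbation $\tau\sum_j d_{ij}^{11}\big(\dot u_{n,j}-u'(t_{n,j})\big)=-\tau\sum_j d_{ij}^{11}z_{n,j}\big(W(\bar u_{n,j})-W(u_{n,j})\big)$, then bound the latter by $\mathcal O(\tau^{\nu+1})$ via the $\nu$-point extrapolation error and the local Lipschitz property of $W$. Your discussion of the extrapolation bookkeeping and the local-Lipschitz restriction is more careful than the paper's, which simply asserts $\|u_{n,i}-\bar u_{n,i}\|\le C\tau^\nu$ without further comment, but the underlying argument is the same.
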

\begin{proof} Using both the first relations in \eqref{4.1} and  \eqref{2.2.4} gives
\begin{align}    \label{4.5}
u_{n,i} \!-\!  \sum\limits_{j=1}^r d_{ij}^{12} \hat{u}_j(t_n) \!-\! \tau \sum\limits_{j=1}^s d_{ij}^{11} u_t(t_{n,j})  \!=\! \tau \sum\limits_{j=1}^s d_{ij}^{11} z_{n,j}\left[ W(\bar{u}_{n,j}) \!-\! W(u_{n,j})\right] \!+\! \eta_{n,i}.
\end{align}
Let us denote by $\tilde{\eta}_{n,i}$   the left hand side of \eqref{4.5}.
 Since the generalized stage order of the GLTDs is  $\hat{q}$, we have
\begin{align}  \label{4.6}
\left\|\tilde{\eta}_{n,i} \right\|  \le C \tau^{\hat{q}+1},~~~~ i=1,2,\cdots, s.
\end{align}
Note that due to the $\nu$-point extrapolation, the error $u_{n,i} - \bar{u}_{n,i}$ is at least $\mathcal{O}(\tau^{\nu})$, i.e.,
\begin{align*}
\left\|u_{n,i} - \bar{u}_{n,i} \right\|  \le C \tau^{\nu},~~~~ i=1,2,\cdots, s,
\end{align*}
which implies that
\begin{align}   \label{4.7}
\left\|W(\bar{u}_{n,i}) - W(u_{n,i}) \right\|  \le C \tau^{\nu},~~~~ i=1,2,\cdots, s.
\end{align}
Combining \eqref{4.5} with \eqref{4.6} and \eqref{4.7} yields
\begin{align*}%  \label{4.8}
\left\|\eta_{n,i} \right\|  \le C \tau^{\min\{\hat{q}+1,\nu+1\}},~~~~ i=1,2,\cdots, s.
\end{align*}
On the other hands, it follows from both the first relations in \eqref{4.2} and   \eqref{2.2.4} that
\begin{align}    \label{4.9}
\hat{u}_i(t_{n+1}) \!-\!  \sum\limits_{j=1}^r d_{ij}^{22} \hat{u}_j(t_n) \!-\! \tau \sum\limits_{j=1}^s d_{ij}^{21} u_t(t_{n,j})  \!=\! \tau \sum\limits_{j=1}^s d_{ij}^{21} z_{n,j} \left[ W(\bar{u}_{n,j}) \!-\! W(u_{n,j})\right] \!+\! \eta_i^{[n]}.
\end{align}
If setting the quantity at the left hand side of \eqref{4.9} as $\tilde{\eta}_i^{[n]}$, then one can derive from the definition of the generalized stage order $\hat{q}$ that
\begin{align}  \label{4.10}
\left\|\tilde{\eta}_i^{[n]} \right\|  \le C \tau^{\hat{q}+1},~~~~ i=1,2,\cdots, r.
\end{align}
Combining \eqref{4.9} with \eqref{4.10} and \eqref{4.7}  deduces
\begin{align}  \label{4.11}
\left\|\eta_i^{[n]} \right\|  \le C \tau^{\min\{\hat{q}+1,\nu+1\}},~~~~ i=1,2,\cdots, r.
\end{align}
Similarly, the local errors $ \sigma_{n,i}$ and $\sigma_i^{[n]}$ can be derived.
Therefore, the estimates \eqref{4.4} hold and   the proof is completed.
\end{proof}

\subsection{Global error analysis}

This subsection   focuses on the global error analysis of the SAV-GL schemes \eqref{2.2.6}-\eqref{2.2.8}. To this end,   define the intermediate values $\mathcal{U}_{n,i}$, $\mathcal{Z}_{n,i}$, $\mathcal{U}_i^{[n+1]}$ and $\mathcal{Z}_i^{[n+1]}$ by
\begin{align}  \label{4.12}
\begin{cases}
\mathcal{U}_{n,i} = \tau \sum\limits_{j=1}^s d_{ij}^{11} \dot{\mathcal{U}}_{n,j} + \sum\limits_{j=1}^r d_{ij}^{12} \hat{u}_j(t_n),  \\[2 \jot]
\mathcal{Z}_{n,i} = \tau \sum\limits_{j=1}^s d_{ij}^{11} \dot{\mathcal{Z}}_{n,j} + \sum\limits_{j=1}^r d_{ij}^{12} \hat{z}_j(t_n),~~~  i =1,2,\cdots,s,
\end{cases}
\end{align}
and
\begin{align}   \label{4.13}
\begin{cases}
\mathcal{U}_i^{[n+1]} = \tau \sum\limits_{j=1}^s d_{ij}^{21} \dot{\mathcal{U}}_{n,j} + \sum\limits_{j=1}^r d_{ij}^{22} \hat{u}_j(t_n),  \\[2 \jot]
\mathcal{Z}_i^{[n+1]} = \tau \sum\limits_{j=1}^s d_{ij}^{21} \dot{\mathcal{Z}}_{n,j} + \sum\limits_{j=1}^r d_{ij}^{22} \hat{z}_j(t_n),~~~ i =1,2,\cdots,r,
\end{cases}
\end{align}
where
\begin{align}  \label{4.14}
& \dot{\mathcal{U}}_{n,i} = - \left[ \mathcal{L} \mathcal{U}_{n,i} + \mathcal{Z}_{n,i} W(\bar{u}_{n,i})\right],~~~ \dot{\mathcal{Z}}_{n,i} = \frac{1}{2} \left( W(\bar{u}_{n,i}),  \dot{\mathcal{U}}_{n,i}\right).
\end{align}
Those intermediate values will play an important role to derive the global error estimates of the SAV-GL schemes  \eqref{2.2.6}-\eqref{2.2.8}.
Such technique has been used to study the convergence of the GLTDs for the ODEs, see e.g. \cite{Li89,HuangC01}.
We   first give  the error estimates between  the intermediate values $\mathcal{U}_{n,i}$, $\mathcal{Z}_{n,i}$, $\mathcal{U}_i^{[n+1]}$, $\mathcal{Z}_i^{[n+1]}$
and the values $u_{n,i}$, $z_{n,i}$, $\hat{u}_i(t_{n+1})$, $\hat{z}_i(t_{n+1})$.

\begin{theorem}   \label{Thm4.1}
Under the hypothesis $\mathcal{H}_1$, if the GLTDs \eqref{2.1.2}  are diagonally stable and have the generalized stage order $\hat{q}$, then the following estimates can be obtained
\begin{align}  \label{4.15}
& \sum_{i=1}^s \left(\left\|u_{n,i} - \mathcal{U}_{n,i}\right\| + \left|z_{n,i} - \mathcal{Z}_{n,i}\right|\right)    \le C \tau^{\min\{\hat{q}+1,\nu+1\}},  \\
\label{4.16}
& \sum_{i=1}^r \left(\left\|\hat{u}_i(t_{n+1}) - \mathcal{U}_i^{[n+1]}\right\| + \left|\hat{z}_i(t_{n+1}) - \mathcal{Z}_i^{[n+1]}\right|\right)  \le C \tau^{\min\{\hat{q}+1,\nu+1\}},
\end{align}
when the time stepsize $\tau$ is sufficiently small.
\end{theorem}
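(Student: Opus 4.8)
The plan is to derive a recursion for the defect between the numerical-type intermediate values and the exact-solution values, using the structure of \eqref{2.2.6}--\eqref{2.2.8} (satisfied by $\mathcal{U}_{n,i}$, $\mathcal{Z}_{n,i}$ via \eqref{4.12}--\eqref{4.13}) against \eqref{4.1}--\eqref{4.2} (satisfied by $u_{n,i}$, $z_{n,i}$ up to the local errors $\eta_{n,i},\eta_i^{[n]},\sigma_{n,i},\sigma_i^{[n]}$ of Lemma~\ref{Lam4.1}). First I would set $e_{n,i}=u_{n,i}-\mathcal{U}_{n,i}$, $E_i^{[n]}=\hat{u}_i(t_n)-\mathcal{U}_i^{[n]}$, and analogously $\varepsilon_{n,i}=z_{n,i}-\mathcal{Z}_{n,i}$, $\mathcal{E}_i^{[n]}=\hat{z}_i(t_n)-\mathcal{Z}_i^{[n]}$. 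Subtracting \eqref{4.12} from \eqref{4.1} and using \eqref{4.3}, \eqref{4.14} gives, for the internal stages,
\begin{align*}
e_{n,i} + \tau\sum_{j=1}^s d_{ij}^{11}\bigl[\mathcal{L}e_{n,j} + \varepsilon_{n,j}W(\bar{u}_{n,j})\bigr] = \sum_{j=1}^r d_{ij}^{12}E_j^{[n]} + \eta_{n,i},
\end{align*}
since the nonlinear term $W$ is evaluated at the \emph{same} extrapolated argument $\bar{u}_{n,j}$ in both \eqref{4.3} and \eqref{4.14} — this is the crucial simplification that makes the error equation linear in $(e,\varepsilon)$. A parallel identity holds for $\varepsilon_{n,i}$ from the second components, and for $E_i^{[n+1]}$, $\mathcal{E}_i^{[n+1]}$ from \eqref{4.2} minus \eqref{4.13}.

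The key step is to solve the internal-stage system for the stage errors in terms of the incoming external errors $E^{[n]}$, $\mathcal{E}^{[n]}$ and the local defects. Here the diagonal stability (Definition~\ref{def2.2}) enters: it guarantees $\vec{D}_{11}$ is nonsingular (as noted after Definition~\ref{def2.2}, citing \cite{Li89}), and more importantly, by testing the stage-error equation with $\tilde{\vec{H}}$-weighted inner products against $\mathcal{L}e_{n,i}$ (respectively $\varepsilon_{n,i}$), the positive definiteness of $\tilde{\vec H}\vec D_{11}+\vec D_{11}^T\tilde{\vec H}$ combined with the semi-definiteness of $\mathcal{L}$ absorbs the implicit $\mathcal{L}$-term and yields, for $\tau$ small enough that the Lipschitz contribution of $W(\bar u_{n,j})$ (bounded via $\mathcal{H}_1$) is dominated,
\begin{align*}
\sum_{i=1}^s\bigl(\|e_{n,i}\| + |\varepsilon_{n,i}|\bigr) \le C\sum_{i=1}^r\bigl(\|E_i^{[n]}\| + |\mathcal{E}_i^{[n]}|\bigr) + C\sum_{i=1}^s\bigl(\|\eta_{n,i}\| + |\sigma_{n,i}|\bigr).
\end{align*}
Substituting this bound back into the external-error relations (the analogues of \eqref{4.2} minus \eqref{4.13}) produces a discrete recursion $\|\vec E^{[n+1]}\|+\|\vec{\mathcal E}^{[n+1]}\| \le (1+C\tau)\bigl(\|\vec E^{[n]}\|+\|\vec{\mathcal E}^{[n]}\|\bigr) + C\tau^{\min\{\hat q+1,\nu+1\}}$ — the $(1+C\tau)$ factor and the extra power of $\tau$ arising because the $\vec D_{21}$-terms carry a $\tau$. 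Since the intermediate values start from the exact values (i.e. $\mathcal{U}_i^{[n]}$, $\mathcal{Z}_i^{[n]}$ are generated stepwise from $\hat u_j(t_n)$, $\hat z_j(t_n)$, so at each $n$ we restart the recursion from the value $\hat u_i(t_n)-\mathcal{U}_i^{[n]}$ being compared), one applies the estimate \emph{one step at a time} with zero incoming external error, giving $\|\vec E^{[n+1]}\|+\|\vec{\mathcal E}^{[n+1]}\|\le C\tau^{\min\{\hat q+1,\nu+1\}}$ directly, and then feeding this through the stage bound above gives \eqref{4.15}; \eqref{4.16} is then immediate from the external-error relation.

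The main obstacle I anticipate is handling the coupling between the $u$-error and the $z$-error through the term $\varepsilon_{n,j}W(\bar u_{n,j})$ in the $u$-stage equation and through $\tfrac12(W(\bar u_{n,j}),\dot{\mathcal U}_{n,j})$-type terms in the $z$-equation: the two weighted energy estimates must be combined so that the cross terms are controlled, which requires bounding $W(\bar u_{n,j})$ in $L^\infty$ or $L^2$ uniformly (legitimate under $\mathcal{H}_1$ plus boundedness of the extrapolant, which itself needs that $U_{n-1,i}$ stays near $u_{n-1,i}$ — a point one must either bootstrap or absorb into the induction). A secondary technical point is that $\dot{\mathcal{U}}_{n,j}$ contains $\mathcal{L}\mathcal{U}_{n,j}=\mathcal{L}u_{n,j}-\mathcal{L}e_{n,j}$, so the $z$-error equation inherits an $\mathcal{L}e$-term; this is controlled because the $z$-stage equation is tested against scalars (the $\varepsilon_{n,j}$ are functions of $t$ only) and one uses $(W(\bar u_{n,j}),\mathcal{L}e_{n,j})=(\mathcal L W(\bar u_{n,j}),e_{n,j})$ together with smoothness of $W(\bar u_{n,j})$ from $\mathcal{H}_1$, so no uncontrolled high-norm of $e$ appears. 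With these in hand the argument is a standard one-step stability-plus-consistency estimate in the diagonally-stable GLTD framework, exactly parallel to the ODE theory of \cite{Li89,HuangC01}.
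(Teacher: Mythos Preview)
Your overall strategy matches the paper's: subtract \eqref{4.12}--\eqref{4.13} from \eqref{4.1}--\eqref{4.2}, exploit that $W$ is evaluated at the \emph{same} extrapolant $\bar u_{n,i}$ on both sides so the error equations are linear, and use diagonal stability to absorb the implicit $\mathcal L$-contribution. The paper carries this out by multiplying the stage error equation by $\vec D_{11}^{-1}$ and testing with $\tilde h_i e_{n,i}$ (equivalently, using that $\tilde{\vec H}\vec D_{11}^{-1}+\vec D_{11}^{-T}\tilde{\vec H}=\vec D_{11}^{-T}(\tilde{\vec H}\vec D_{11}+\vec D_{11}^T\tilde{\vec H})\vec D_{11}^{-1}$ is positive definite), which yields directly $\sum_i\|e_{n,i}\|\le C\tau\sum_i|\varepsilon_{n,i}|+C\sum_i\|\eta_{n,i}\|$. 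For the $z$-coupling the paper does \emph{not} integrate by parts: instead it inverts $\vec D_{11}$ in \eqref{4.17} to bound $\sum_i\|\dot u_{n,i}-\dot{\mathcal U}_{n,i}\|$ by $C\sum_i|\varepsilon_{n,i}|+C\tau^{-1}\sum_i\|\eta_{n,i}\|$, and then estimates $(W(\bar u_{n,i}),\dot u_{n,i}-\dot{\mathcal U}_{n,i})$ by Cauchy--Schwarz. This needs only $\|W(\bar u_{n,i})\|$ bounded, not $\|\mathcal L W(\bar u_{n,i})\|$ as your integration-by-parts route would require; both are legitimate under $\mathcal H_1$, but the paper's device is cleaner.

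Two points where your write-up overcomplicates matters. First, there is no incoming external error at all: the intermediate values \eqref{4.12}--\eqref{4.13} are defined with the \emph{exact} inputs $\hat u_j(t_n),\hat z_j(t_n)$, so your $E_j^{[n]}$ and $\mathcal E_j^{[n]}$ vanish identically and no recursion or Gronwall step is needed---the estimate is purely one-step. Second, your anticipated obstacle about bootstrapping boundedness of the extrapolant is misplaced here: in \eqref{4.3} and \eqref{4.14} the quantity $\bar u_{n,i}$ is built from exact solution values $u_{n-1,i}$ and $\hat u_i(t_n)$, not from numerical values $U_{n-1,i}$, so its boundedness follows directly from $\mathcal H_1$. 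That bootstrap issue arises only later, in the global-error Theorem~\ref{Thm4.3}, where the numerical extrapolant $\bar U_{n,i}$ must be controlled by induction.
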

\begin{proof}
Subtracting the first equation in \eqref{4.12} from that in \eqref{4.1} yields
\begin{align}  \label{4.17}
u_{n,i} \!- \mathcal{U}_{n,i} = \tau \sum_{j=1}^s d_{ij}^{11} \left[ \dot{u}_{n,j} \!- \dot{\mathcal{U}}_{n,j} \right] + \eta_{n,i},
\end{align}
where
\begin{align*}
\dot{u}_{n,i} \!- \dot{\mathcal{U}}_{n,i} =  -\mathcal{L}\left(u_{n,i} \!- \mathcal{U}_{n,i} \right) - W(\bar{u}_{n,i}) \left( z_{n,i} \!- \mathcal{Z}_{n,i} \right).
\end{align*}
Since the GLTDs are diagonally stable, there exists a positive definite diagonal matrix $\tilde{\vec{H}} \!=\! \mbox{diag}([\tilde{h}_1,\tilde{h}_2,\cdots,\tilde{h}_s])$ such that the matrix $\tilde{\vec{M}} \!=\! (\tilde{m}_{ij}) \!=\! \tilde{\vec{H}}\vec{D}_{11} \!+ \!\vec{D}_{11}^T\tilde{\vec{H}}$ is positive definite. Hence, the matrix $\vec{D}_{11}$ is nonsingular and there exists a positive constant $l$ dependent only on the method such that the matrix
\begin{align}  \label{4.18}
\tilde{\vec{M}}_l =(\tilde{m}_{ij}^{(l)})= \vec{D}_{11}^{-T}\tilde{\vec{M}} \vec{D}_{11}^{-1}- 2l\tilde{\vec{H}} = \vec{D}_{11}^{-T}\tilde{\vec{H}} + \tilde{\vec{H}} \vec{D}_{11}^{-1} -2l\tilde{\vec{H}}
\end{align}
is   positive definite. Use $\tilde{m}_{ij}^{(d)}$ to denote
the entries of the matrix $\tilde{\vec{M}}_d = \tilde{\vec{H}} \vec{D}_{11}^{-1}$.

 It holds
\begin{align}    \label{4.19*}
0\le\;& 2l\sum_{i=1}^s \tilde{h}_i \left\| u_{n,i} \!- \mathcal{U}_{n,i} \right\|^2 - 2\tau\sum_{i=1}^s \tilde{h}_i \left( -\mathcal{L}\left( u_{n,i} \!- \mathcal{U}_{n,i} \right), u_{n,i} \!- \mathcal{U}_{n,i}\right) \nonumber \\
\overset{\eqref{4.18}}{=} \;& -\sum_{i,j=1}^s \tilde{m}_{ij}^{(l)} \left( u_{n,i} \!- \mathcal{U}_{n,i}, u_{n,j} \!- \mathcal{U}_{n,j} \right) +2\sum_{i,j=1}^s \tilde{m}_{ij}^{(d)}\left( u_{n,i} \!- \mathcal{U}_{n,i}, u_{n,j} \!- \mathcal{U}_{n,j} \right) \nonumber \\
& - 2\tau\sum_{i=1}^s \tilde{h}_i \left( -\mathcal{L}\left( u_{n,i} \!- \mathcal{U}_{n,i} \right), u_{n,i} \!- \mathcal{U}_{n,i} \right)  \nonumber \\
\overset{\eqref{4.17}}{=} \; & -\sum_{i,j=1}^s \tilde{m}_{ij}^{(l)} \left( u_{n,i}\! - \mathcal{U}_{n,i}, u_{n,j} \!- \mathcal{U}_{n,j} \right) + 2\sum_{i,j=1}^s  \tilde{m}_{ij}^{(d)} \left( u_{n,i} \!- \mathcal{U}_{n,i}, \eta_{n,j} \right) \nonumber \\
& - 2\tau\sum_{i=1}^s \tilde{h}_i \left( W(\bar{u}_{n,i} )\left( z_{n,i} \!- \mathcal{Z}_{n,i} \right), u_{n,i} \!- \mathcal{U}_{n,i}  \right)  \nonumber \\
\le\; & -\lambda_l \sum_{i=1}^s\left\| u_{n,i} - \mathcal{U}_{n,i} \right\|^2 + C \sum_{i=1}^s\left\| u_{n,i} - \mathcal{U}_{n,i} \right\|\sum_{i=1}^s\left\| \eta_{n,i}   \right\| \nonumber \\
& + \tau C\sum_{i=1}^s\left\| u_{n,i} - \mathcal{U}_{n,i} \right\| \sum_{i=1}^s\left| z_{n,i} - \mathcal{Z}_{n,i} \right|,
\end{align}
where  the hypothesis $\mathcal{H}_1$ has been used in the last inequality, and $\lambda_l$ is the minimum eigenvalue of $\tilde{\vec{M}}_{l}$.
Therefore, one can obtain
\begin{align}  \label{4.19}
\sum_{i=1}^s\left\| u_{n,i} - \mathcal{U}_{n,i} \right\| \le \tau C \sum_{i=1}^s\left| z_{n,i} - \mathcal{Z}_{n,i} \right| + C \sum_{i=1}^s\left\| \eta_{n,i} \right\|.
\end{align}
{Combining} it with \eqref{4.17} gives
\begin{align}  \label{4.20}
\sum_{i=1}^s\left\| \dot{u}_{n,i} - \dot{\mathcal{U}}_{n,i} \right\| \le C \sum_{i=1}^s\left| z_{n,i} - \mathcal{Z}_{n,i} \right| + \tau^{-1}C  \sum_{i=1}^s \left\| \eta_{n,i} \right\|.
\end{align}
Using both the second equations in \eqref{4.1} and \eqref{4.12} gives
\begin{align}    \label{4.21}
z_{n,i} \!- \mathcal{Z}_{n,i}  \!=\! \tau \!\sum_{i=1}^s d_{ij}^{11} \!\left(\dot{z}_{n,i} \!- \dot{\mathcal{Z}}_{n,i} \right) \!+ \sigma_{n,i} \!= \frac{\tau}{2} \!\sum_{i=1}^s d_{ij}^{11} \!\left( W(\bar{u}_{n,i}),  \dot{u}_{n,i} \!- \dot{\mathcal{U}}_{n,i} \right) \!+ \sigma_{n,i},
\end{align}
and then further using \eqref{4.20} gets
\begin{align*}
\sum_{i=1}^s\left|  z_{n,i} - \mathcal{Z}_{n,i}  \right| \le\;&  \tau C \sum_{i=1}^s\left\| \dot{u}_{n,i} - \dot{\mathcal{U}}_{n,i} \right\|  + C \sum_{i=1}^s \left| \sigma_{n,i} \right|  \nonumber \\
\overset{\eqref{4.20}}{\le}\; & C\sum_{i=1}^s\left( \left\| \eta_{n,i} \right\| + \tau  \left| z_{n,i} - \mathcal{Z}_{n,i} \right| \right) + C \sum_{i=1}^s \left| \sigma_{n,i} \right|.
\end{align*}
When $\tau$ is sufficiently small, the above inequality infers
\begin{align}  \label{4.22}
\sum_{i=1}^s\left|  z_{n,i} - \mathcal{Z}_{n,i}  \right| \le C\sum_{i=1}^s\left( \left\| \eta_{n,i} \right\|  +   \left| \sigma_{n,i} \right|  \right).
\end{align}
Combining \eqref{4.22} with the first equality in \eqref{4.21} yields
\begin{align}  \label{4.22*}
\sum_{i=1}^s\left|  \dot{z}_{n,i} - \dot{\mathcal{Z}}_{n,i}  \right| \le C\tau^{-1}\sum_{i=1}^s\left( \left\| \eta_{n,i} \right\|  +   \left| \sigma_{n,i} \right|  \right).
\end{align}
On the other hands,  it follows from both the first equations in \eqref{4.2} and \eqref{4.13} and the inequality \eqref{4.20} that
\begin{align}   \label{4.23}
& \sum_{i=1}^r \left\|\hat{u}_i(t_{n+1}) - \mathcal{U}_i^{[n+1]}\right\| \le
\tau C \sum_{i=1}^s\left\| \dot{u}_{n,i} - \dot{\mathcal{U}}_{n,i} \right\| +  C \sum_{i=1}^r \left\| \eta_i^{[n]} \right\|    \nonumber \\
\overset{\eqref{4.20}}{\le}\;& C\sum_{i=1}^s\left( \left\| \eta_{n,i} \right\| + \tau  \left| z_{n,i} - \mathcal{Z}_{n,i} \right| \right)+ C \sum_{i=1}^r \left\| \eta_i^{[n]} \right\| \nonumber \\
\overset{\eqref{4.22}}{\le} \; &C\sum_{i=1}^s\left( \left\| \eta_{n,i} \right\| +  \left| \sigma_{n,i} \right| \right)+ C \sum_{i=1}^r \left\| \eta_i^{[n]} \right\|.
\end{align}
Also, by both the second equations in \eqref{4.2} and \eqref{4.13} and the inequality \eqref{4.22*}, one can conclude
\begin{align}  \label{4.24}
\sum_{i=1}^r \left|\hat{z}_i(t_{n+1}) \!- \mathcal{Z}_i^{[n+1]}\right| \le\;&
\tau C \sum_{i=1}^s\left|  \dot{z}_{n,i} \!- \dot{\mathcal{Z}}_{n,i}  \right|+  C \sum_{i=1}^r \left| \sigma_i^{[n]} \right| \nonumber \\
\overset{\eqref{4.22*}}{\le}\;& C\sum_{i=1}^s\left( \left\| \eta_{n,i} \right\| + \left| \sigma_{n,i} \right| \right) +  C \sum_{i=1}^r \left| \sigma_i^{[n]} \right|.
\end{align}
Finally, in terms of \eqref{4.19}, \eqref{4.22}, \eqref{4.23} and \eqref{4.24}, and using Lemma \ref{Lam4.1}, we can obtain the estimates \eqref{4.15} and \eqref{4.16}  so that the proof is completed.
\end{proof}

Denote the   ``errors'' by
\[ \mathcal{E}_{n,i} =  \mathcal{U}_{n,i} - U_{n,i},~~ \mathcal{E}_i^{[n]}=  \mathcal{U}_i^{[n]} - u_i^{[n]},~~ \mathcal{D}_{n,i} = \mathcal{Z}_{n,i} - Z_{n,i},~~ \mathcal{D}_i^{[n]} =  \mathcal{Z}_i^{[n]} - z_i^{[n]}, \]
\[ E_{n,i} =  u_{n,i} - U_{n,i},~~ E_i^{[n]}=  u_i(t_n) - u_i^{[n]},~~ D_{n,i} = z_{n,i} - Z_{n,i},~~ D_i^{[n]} =  z_i(t_n) - z_i^{[n]}. \]
The  errors for the SAV-GL schemes \eqref{2.2.6}-\eqref{2.2.8} can be estimated as follows.
\begin{theorem}  \label{Thm4.3}
Under the hypothesises $\mathcal{H}_1$ and $\mathcal{H}_2$, if the GLTDs \eqref{2.1.2} are algebraically stable and diagonally stable and their generalized stage order is $\hat{q}$,  then  the SAV-GL schemes \eqref{2.2.6}-\eqref{2.2.8} have the following error estimates
\begin{align}   \label{4.25}
\sum_{i=1}^r \left(\left\|E_i^{[n+1]}\right\|^2 + \left|D_i^{[n+1]}\right|^2\right)  + \tau\sum_{i=1}^s \left(\left\|E_{n,i}\right\|^2 + \left|D_{n,i}\right|^2\right)   \le C \tau^{\min\{2\hat{q},2\nu\}},
\end{align}
when the time stepsize $\tau$ is sufficiently small.
\end{theorem}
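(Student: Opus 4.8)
The plan is to isolate the ``propagation'' part of the error by comparing the SAV--GL iterates with the one-step reference values $\mathcal{U}_{n,i},\mathcal{Z}_{n,i},\mathcal{U}_i^{[n+1]},\mathcal{Z}_i^{[n+1]}$ of \eqref{4.12}--\eqref{4.14}, and then to close a two-step discrete Gronwall inequality in a $\vec{G}$-weighted norm, running an induction on $n$ that simultaneously keeps the numerical solution in a fixed neighbourhood so that $W$ is Lipschitz with a $\tau$-independent constant. First I would write $E_{n,i}=(u_{n,i}-\mathcal{U}_{n,i})+\mathcal{E}_{n,i}$ and $E_i^{[n]}=(u_i(t_n)-\hat u_i(t_n))+(\hat u_i(t_n)-\mathcal{U}_i^{[n]})+\mathcal{E}_i^{[n]}$, and likewise for the $z$-quantities. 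By Definition \ref{def2.4} the terms $u_i(t_n)-\hat u_i(t_n)$ are $\mathcal{O}(\tau^{\hat q})$, while Theorem \ref{Thm4.1} bounds $u_{n,i}-\mathcal{U}_{n,i}$ and $\hat u_i(t_{n+1})-\mathcal{U}_i^{[n+1]}$ by $C\tau^{\min\{\hat q+1,\nu+1\}}$; since $\tau^{\hat q}\le\tau^{\min\{\hat q,\nu\}}$ for $\tau\le1$, it suffices to prove $\sum_i(\|\mathcal{E}_i^{[n+1]}\|^2+|\mathcal{D}_i^{[n+1]}|^2)+\tau\sum_i(\|\mathcal{E}_{n,i}\|^2+|\mathcal{D}_{n,i}|^2)\le C\tau^{2\min\{\hat q,\nu\}}$. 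Subtracting \eqref{2.2.6}--\eqref{2.2.8} from \eqref{4.12}--\eqref{4.13} shows these ``errors'' again solve the GLTD applied to the reformulated system, now with the only inhomogeneity carried through $\delta W_{n,i}:=W(\bar u_{n,i})-W(\bar U_{n,i})$, because $\dot{\mathcal{E}}_{n,i}:=\dot{\mathcal{U}}_{n,i}-\dot U_{n,i}=-\mathcal{L}\mathcal{E}_{n,i}-\mathcal{D}_{n,i}W(\bar u_{n,i})-Z_{n,i}\delta W_{n,i}$ and $\dot{\mathcal{D}}_{n,i}=\tfrac12(W(\bar u_{n,i}),\dot{\mathcal{E}}_{n,i})+\tfrac12(\delta W_{n,i},\dot U_{n,i})$. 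Under $\mathcal{H}_1$ and the induction hypothesis, $\|\delta W_{n,i}\|\le C\|\bar u_{n,i}-\bar U_{n,i}\|$, and since $\bar u_{n,i}-\bar U_{n,i}$ is the $\nu$-point extrapolation of $u_{n-1,j}-U_{n-1,j}$ and $\hat u_j(t_n)-u_j^{[n]}$, the above splitting gives $\|\delta W_{n,i}\|\le C\sum_j(\|\mathcal{E}_{n-1,j}\|+\|\mathcal{E}_j^{[n]}\|)+C\tau^{\min\{\hat q,\nu\}}$, i.e. the forcing is controlled by the errors of steps $n-1$ and $n$ up to the target order.

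Next I would bound the stage errors. Repeating the diagonal-stability argument \eqref{4.19*}--\eqref{4.22} of the proof of Theorem \ref{Thm4.1} (in which the $\mathcal{L}$-terms cancel after inserting $\tau\vec{D}_{11}\dot{\vec{\mathcal{E}}}_n=\vec{\mathcal{E}}_n-\vec{D}_{12}\vec{\mathcal{E}}^{[n]}$ from \eqref{4.12}), now with the local truncation error replaced by the incoming vector $\vec{D}_{12}\vec{\mathcal{E}}^{[n]}$ and the forcing $-Z_{n,i}\delta W_{n,i}$, yields for $\tau$ small that $\sum_i(\|\mathcal{E}_{n,i}\|+|\mathcal{D}_{n,i}|+\tau\|\dot{\mathcal{E}}_{n,i}\|)\le C(\mathcal{N}^{[n]})^{1/2}+C\tau\sum_i\|\delta W_{n,i}\|$, where $\mathcal{N}^{[n]}:=\sum_i(\|\mathcal{E}_i^{[n]}\|^2+|\mathcal{D}_i^{[n]}|^2)$; squaring and using the bound on $\delta W_{n,i}$ gives $\sum_i(\|\mathcal{E}_{n,i}\|^2+|\mathcal{D}_{n,i}|^2)\le C\mathcal{N}^{[n]}+C\tau^2(\mathcal{N}^{[n-1]}+\mathcal{N}^{[n]}+\tau^{2\min\{\hat q,\nu\}})$. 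In particular $Z_{n,i}=\mathcal{Z}_{n,i}-\mathcal{D}_{n,i}$ and $\dot U_{n,i}=\dot{\mathcal{U}}_{n,i}-\dot{\mathcal{E}}_{n,i}$ are $\tau$-independently bounded (Theorem \ref{Thm4.1} controls $\mathcal{Z}_{n,i},\dot{\mathcal{U}}_{n,i}$, and $\min\{\hat q,\nu\}\ge1$), which is exactly what closes the induction.

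Third, I would run the algebraic-stability (energy) computation \eqref{3.2}--\eqref{3.8} of the proof of Theorem \ref{Thm3.1} on this error system. Writing $\dot{\mathcal{E}}_{n,i}=\mathcal{G}\nu_{n,i}$ with $\nu_{n,i}=\mathcal{L}\mathcal{E}_{n,i}+\mathcal{D}_{n,i}W(\bar u_{n,i})+Z_{n,i}\delta W_{n,i}$ and replacing $\mathcal{L}U_{n,i}$ by $\mathcal{L}\mathcal{E}_{n,i}=\nu_{n,i}-\mathcal{D}_{n,i}W(\bar u_{n,i})-Z_{n,i}\delta W_{n,i}$, the $\mathcal{D}_{n,i}(W(\bar u_{n,i}),\dot{\mathcal{E}}_{n,i})$ contributions cancel exactly as there, and since $(\nu_{n,i},\mathcal{G}\nu_{n,i})=-\|\dot{\mathcal{E}}_{n,i}\|^2$ one obtains
\begin{align*}
& \tfrac12\|\mathcal{E}^{[n+1]}\|_{\mathcal{L},\vec{G}}^2+\|\mathcal{D}^{[n+1]}\|_{\vec{G}}^2+\tau\sum_{i=1}^s h_i\|\dot{\mathcal{E}}_{n,i}\|^2 \\
\le\;& \tfrac12\|\mathcal{E}^{[n]}\|_{\mathcal{L},\vec{G}}^2+\|\mathcal{D}^{[n]}\|_{\vec{G}}^2-\tau\sum_{i=1}^s h_i Z_{n,i}\bigl(\delta W_{n,i},\dot{\mathcal{E}}_{n,i}\bigr)+\tau\sum_{i=1}^s h_i \mathcal{D}_{n,i}\bigl(\delta W_{n,i},\dot U_{n,i}\bigr),
\end{align*}
with $\|\mathcal{E}^{[n]}\|_{\mathcal{L},\vec{G}}^2:=(\mathcal{L}\vec{\mathcal{E}}^{[n]},\vec{\mathcal{E}}^{[n]})_{\vec{G}}$ and $\vec{\mathcal{E}}^{[n]}=(\mathcal{E}_1^{[n]},\dots,\mathcal{E}_r^{[n]})^T$. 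By Young's inequality the first forcing sum is $\le\tfrac12\tau\sum h_i\|\dot{\mathcal{E}}_{n,i}\|^2+C\tau\sum\|\delta W_{n,i}\|^2$, whose first piece is absorbed by the left-hand dissipation, while the second is $\le C\tau\sum(|\mathcal{D}_{n,i}|^2+\|\delta W_{n,i}\|^2)$ by the boundedness of $\dot U_{n,i}$; inserting the stage bound and the estimate for $\|\delta W_{n,i}\|$, the right-hand side becomes $\le\tfrac12\|\mathcal{E}^{[n]}\|_{\mathcal{L},\vec{G}}^2+\|\mathcal{D}^{[n]}\|_{\vec{G}}^2+C\tau(\mathcal{N}^{[n]}+\mathcal{N}^{[n-1]})+C\tau^{1+2\min\{\hat q,\nu\}}$.

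Finally I would turn this into a closed recursion for $\mathcal{N}^{[n]}$ and apply discrete Gronwall. The delicate point is that the energy inequality controls only the $\mathcal{L}$-seminorm of $\mathcal{E}^{[n+1]}$ and the norm of $\mathcal{D}^{[n+1]}$: the full $L^2$ norm of the $u$-error is regained from $\mathcal{E}_i^{[n+1]}=\tau\sum_j d_{ij}^{21}\dot{\mathcal{E}}_{n,j}+\sum_j d_{ij}^{22}\mathcal{E}_j^{[n]}$ together with the stage bound for $\|\dot{\mathcal{E}}_{n,j}\|$, and, when $\mathcal{L}$ is merely positive semidefinite, the component of $\mathcal{E}^{[n]}$ in $\ker\mathcal{L}$ is handled through its own scalar recursion obtained by testing with constants and using $\overline{\mathcal{L}\mathcal{E}_{n,i}}=0$, exactly as the $\mathcal{D}$-part is. This yields $\mathcal{N}^{[n+1]}\le(1+C\tau)\bigl(\|\mathcal{E}^{[n]}\|_{\mathcal{L},\vec{G}}^2+\|\mathcal{D}^{[n]}\|_{\vec{G}}^2+\mathcal{N}^{[n]}\bigr)+C\tau\mathcal{N}^{[n-1]}+C\tau^{1+2\min\{\hat q,\nu\}}$, and a two-step discrete Gronwall over the $\mathcal{O}(\tau^{-1})$ time levels gives $\mathcal{N}^{[n]}\le C\tau^{2\min\{\hat q,\nu\}}$, since the accumulated forcing is $\mathcal{O}(\tau^{-1}\cdot\tau^{1+2\min\{\hat q,\nu\}})$ and $\mathcal{H}_2$ makes $\mathcal{N}^{[1]}$ of the required order; feeding this back into the stage bound gives $\tau\sum_i(\|\mathcal{E}_{n,i}\|^2+|\mathcal{D}_{n,i}|^2)\le C\tau^{1+2\min\{\hat q,\nu\}}$, and the initial splitting produces \eqref{4.25}. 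The main obstacle is exactly this last stage: keeping the Gronwall two-step because of the extrapolation, recovering the $L^2$ component of the $u$-error without destroying the sharp $1+\mathcal{O}(\tau)$ amplification that only the dissipation term $-\tau\sum h_i\|\dot{\mathcal{E}}_{n,i}\|^2$ supplies, and closing — within the same induction, which is what forces $\tau$ to be small — the a priori boundedness of $U,Z,\dot U$ on which the Lipschitz and forcing estimates rest.
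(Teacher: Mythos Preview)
Your outline differs from the paper's proof in one decisive respect, and that difference creates the very obstacle you flag at the end without fully resolving. The paper does \emph{not} mimic the $\mathcal{L}$-weighted energy identity of Theorem~\ref{Thm3.1} on the error system; instead it runs the algebraic-stability computation directly in the plain $L^2$ inner product, obtaining (cf.~\eqref{4.31}--\eqref{4.32})
\[
\bigl\|\vec{\mathcal{E}}^{[n+1]}\bigr\|_{\vec{G}}^2 \;\le\; \bigl\|\hat{\vec{E}}^{[n]}\bigr\|_{\vec{G}}^2 + 2\tau\sum_{i=1}^s h_i\bigl(\mathcal{E}_{n,i},\dot{\mathcal{E}}_{n,i}\bigr),
\]
and then expands $(\mathcal{E}_{n,i},\dot{\mathcal{E}}_{n,i})$ via \eqref{4.28}: the contribution $-(\mathcal{E}_{n,i},\mathcal{L}\mathcal{E}_{n,i})\le 0$ is moved to the left as dissipation, and the rest is bounded by $\tfrac12\|\mathcal{E}_{n,i}\|^2$ plus lower-order terms (see \eqref{4.35}--\eqref{4.36}). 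This already controls the full $L^2$ norm of $\mathcal{E}^{[n+1]}$ with amplification $1+C\tau$, so there is nothing to ``recover'', no kernel argument, no spectral gap. The dissipation $\tau\sum_i h_i(\mathcal{E}_{n,i},\mathcal{L}\mathcal{E}_{n,i})$ is then spent to absorb the cross terms $(\mathcal{E}_{n,i},\mathcal{L}\mathcal{E}_{n,j})$ that arise in the stage bound \eqref{4.39}--\eqref{4.41}, which the paper gets by testing \eqref{4.26} with $\mathcal{E}_{n,i}$ rather than by replaying the diagonal-stability argument of Theorem~\ref{Thm4.1}.

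Your route through the $\mathcal{L}$-seminorm can in principle be salvaged when $\mathcal{L}$ has a spectral gap on $(\ker\mathcal{L})^\perp$, but the closing recursion you write, $\mathcal{N}^{[n+1]}\le(1+C\tau)\bigl(\|\mathcal{E}^{[n]}\|_{\mathcal{L},\vec{G}}^2+\|\mathcal{D}^{[n]}\|_{\vec{G}}^2+\mathcal{N}^{[n]}\bigr)+\cdots$, does not close as stated: $\|\mathcal{E}^{[n]}\|_{\mathcal{L},\vec{G}}^2$ involves the unbounded operator $\mathcal{L}$ and is not dominated by $\mathcal{N}^{[n]}$, so you would have to iterate a different combined quantity (something like $\|\mathcal{E}^{[n]}\|_{\mathcal{L},\vec{G}}^2+\|\mathcal{D}^{[n]}\|_{\vec{G}}^2$ together with the mean of $\mathcal{E}^{[n]}$) and only afterwards extract $\mathcal{N}^{[n]}$ from it via the gap; and your first recovery idea, reading $\|\mathcal{E}_i^{[n+1]}\|$ off the update formula, gives amplification $\|\vec{D}_{22}\|$, not $1+C\tau$. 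A smaller point: the input in \eqref{4.26}--\eqref{4.27} is $\hat E_j^{[n]}=\hat u_j(t_n)-u_j^{[n]}$, not $\mathcal{E}_j^{[n]}$; the paper bridges the two with \eqref{4.45}--\eqref{4.48} before applying the Gronwall step.
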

\begin{proof}
Subtracting   \eqref{2.2.6}-\eqref{2.2.8} from \eqref{4.12}-\eqref{4.14} yields
\begin{align}  \label{4.26}
\begin{cases}
\mathcal{E}_{n,i} = \tau \sum\limits_{j=1}^s d_{ij}^{11} \dot{\mathcal{E}}_{n,i} + \sum\limits_{j=1}^r d_{ij}^{12} \hat{E}_j^{[n]},   \\[2 \jot]
\mathcal{D}_{n,i} = \tau \sum\limits_{j=1}^s d_{ij}^{11} \dot{\mathcal{D}}_{n,i} + \sum\limits_{j=1}^r d_{ij}^{12} \hat{D}_j^{[n]},~~~  i =1,2,\cdots,s,
\end{cases}
\end{align}
and
\begin{align}   \label{4.27}
\begin{cases}
\mathcal{E}_i^{[n+1]} = \tau \sum\limits_{j=1}^s d_{ij}^{21} \dot{\mathcal{E}}_{n,i} + \sum\limits_{j=1}^r d_{ij}^{22} \hat{E}_j^{[n]},  \\[2 \jot]
\mathcal{D}_i^{[n+1]} = \tau \sum\limits_{j=1}^s d_{ij}^{21} \dot{\mathcal{D}}_{n,i} + \sum\limits_{j=1}^r d_{ij}^{22} \hat{D}_j^{[n]},~~~ i =1,2,\cdots,k,
\end{cases}
\end{align}
where  $\hat{E}_i^{[n]}\!=  \hat{u}_i(t_n) \!- u_i^{[n]}, \hat{D}_i^{[n]} \!=  \hat{z}_i(t_n) \!- z_i^{[n]}$ and
\begin{align}  \label{4.28}
& \dot{\mathcal{E}}_{n,i} = -  \mathcal{L} \mathcal{E}_{n,i} - \mathcal{D}_{n,i} W(\bar{U}_i^{(n)}) - \mathcal{Z}_{n,i} \left[ W(\bar{u}_i^{(n)}) - W(\bar{U}_i^{(n)})\right],   \\
\label{4.29}
& \dot{\mathcal{D}}_{n,i} = \frac{1}{2} \left( W(\bar{u}_i^{(n)}) - W(\bar{U}_i^{(n)}),  \dot{\mathcal{U}}_{n,i}\right) + \frac{1}{2}\left( W(\bar{U}_i^{(n)}),  \dot{\mathcal{E}}_{n,i}\right).
\end{align}
Since the GLTDs \eqref{2.1.2} are algebraically stable,  there exist a symmetric positive definite matrix $\vec{G} \in \mathbb{R}^{r\times r}$ and a non-negative definite diagonal matrix $\vec{H} \in \mathbb{R}^{s\times s}$ such that the matrix $\vec{M} = (m_{ij}) \in\mathbb{R}^{(r+s)\times(r+s)}$ is non-negative definite. Hence, from both the first equations in \eqref{4.26} and  \eqref{4.27}, one can deduce
%{\color{red}
\begin{align*} %    \label{4.30}
&  \left(\vec{\mathcal{E}}^{[n+1]}, \vec{\mathcal{E}}^{[n+1]}\right)_{\vec{G}} -  \left( \hat{\vec{E}}^{[n]}, \hat{\vec{E}}^{[n]} \right)_{\vec{G}} - 2\tau \sum_{i=1}^s h_i \left(\mathcal{E}_{n,i}, \dot{\mathcal{E}}_{n,i}\right)  \nonumber \\
=\;& \sum_{i,j}^r g_{ij} \left( \mathcal{E}_i^{[n+1]}, \mathcal{E}_j^{[n+1]} \right)
 - \sum_{i,j}^r g_{ij} \left( \hat{E}_i^{[n]}, \hat{E}_j^{[n]} \right)
 - 2\tau \sum_{i=1}^s h_i \left(\mathcal{E}_{n,i}, \dot{\mathcal{E}}_{n,i}\right)  \nonumber  \\
=\;& - \sum_{i,j}^r g_{ij} \left( \hat{E}_i^{[n]}, \hat{E}_j^{[n]} \right) +  \sum_{i,j}^r g_{ij} \left( \sum_{k=1}^r d_{ik}^{22} \hat{E}_k^{[n]}, \sum_{k=1}^r d_{jl}^{22} \hat{E}_l^{[n]} \right)   \nonumber  \\
& - 2\tau \sum_{i=1}^s h_i \left( \sum_{k=1}^r d_{ik}^{12} \hat{E}_i^{[n]}, \dot{\mathcal{E}}_{n,i}\right) + 2\tau\sum_{i,j}^r g_{ij} \left( \sum_{k=1}^r d_{ik}^{22} \hat{E}_k^{[n]}, \sum_{k=1}^s d_{jl}^{21}\dot{\mathcal{E}}_{n,l} \right)   \nonumber  \\
& - 2\tau^2 \sum_{i=1}^s h_i \left(\sum_{k=1}^s d_{ik}^{11}\dot{\mathcal{E}}_{n,k} , \dot{\mathcal{E}}_{n,i}\right) + \tau^2\sum_{i,j}^r g_{ij} \left( \sum_{k=1}^s d_{ik}^{21}\dot{\mathcal{E}}_{n,k} , \sum_{k=1}^s d_{jl}^{21}\dot{\mathcal{E}}_{n,l}  \right)  \nonumber  \\
=\; & - \sum_{i,j}^r p_{ij} \left( \hat{E}_i^{[n]}, \hat{E}_j^{[n]} \right) - 2\tau \sum_{i=1}^r\sum_{j=1}^s s_{ij} \left(\hat{E}_i^{[n]}, \dot{\mathcal{E}}_{n,i}\right) - \tau^2 \sum_{i,j}^s q_{ij} \left(\dot{\mathcal{E}}_{n,i}, \dot{\mathcal{E}}_{n,i}\right),
\end{align*} %}
which can be rewritten as
\begin{align}     \label{4.31}
\left(\vec{\mathcal{E}}^{[n+1]}, \vec{\mathcal{E}}^{[n+1]}\right)_{\vec{G}} = & \left( \hat{\vec{E}}^{[n]}, \hat{\vec{E}}^{[n]} \right)_{\vec{G}} + 2\tau \sum_{i=1}^s h_i \left(\mathcal{E}_{n,i}, \dot{\mathcal{E}}_{n,i}\right)- \sum_{i,j}^{r+s} m_{ij} \left( \bar{E}_{n,i},\bar{E}_{n,j} \right),
\end{align}
where % the integration by parts is used and the boundary condition makes all boundary terms disappear, and
\[  \bar{\vec{E}}_n := \left(\hat{E}_1^{[n]},\hat{E}_2^{[n]},\cdots,\hat{E}_r^{[n]},\tau \dot{\mathcal{E}}_{n,1},\tau \dot{\mathcal{E}}_{n,2}, \cdots,  \tau \dot{\mathcal{E}}_{n,s} \right)^T.
\]
Since the matrix $\vec{M}$ is non-negative definite, one has from \eqref{4.31} that
\begin{align}     \label{4.32}
\left(\vec{\mathcal{E}}^{[n+1]}, \vec{\mathcal{E}}^{[n+1]}\right)_{\vec{G}} \le  \left( \hat{\vec{E}}^{[n]}, \hat{\vec{E}}^{[n]} \right)_{\vec{G}} + 2\tau \sum_{i=1}^s h_i \left(\mathcal{E}_{n,i}, \dot{\mathcal{E}}_{n,i}\right).
\end{align}

In the following,  the mathematical induction is used to prove the  inequality \eqref{4.25} for all $1\le n\le K-1$.
Assume $1\le m\le K-1$ and  \eqref{4.25} is true for all  $n\le m$.
Let us prove \eqref{4.25} for $n = m+1$.

For evaluating the starting values by the extrapolation,  it holds for $n\le m$ that
\[ \left\| \bar{u}_{n,i} - \bar{U}_{n,i}\right\| \le C \left( \sum_{i=1}^s \left\|E_{n-1,i} \right\| + \sum_{i=1}^r \left\|\hat{E}_i^{[n]} \right\|\right), \]
which further implies  that
\begin{align}   \label{4.33}
\left\|W(\bar{u}_{n,i}) - W(\bar{U}_{n,i}) \right\| \le C\left( \sum_{i=1}^s \left\|E_{n-1,i} \right\| + \sum_{i=1}^r \left\|\hat{E}_i^{[n]} \right\|\right),
\end{align}
since the function $W$ is locally Lipschitz continuous. In terms of \eqref{4.33} and the boundedness of the quantity $\bar{U}_{n,i}$ for $n\le m$ due to the induction assumption,  we can obtain
\begin{align}    \label{4.34}
\left\|  \mathcal{D}_{n,i} W(\bar{U}_{n,i}) \!+\! \mathcal{Z}_{n,i}\!\left[ W(\bar{u}_{n,i}) \!-\! W(\bar{U}_{n,i})\right] \!\right\|  \!\le\!  C \! \left(\!\left|\mathcal{D}_{n,i} \right|  \!+\! \sum_{i=1}^s \!\left\|E_{n-1,i} \right\| \!+\! \sum_{i=1}^r \!\left\|\hat{E}_i^{[n]} \right\|\!\right)\!.
\end{align}
Using \eqref{4.28} and \eqref{4.34} gives
%Then for the second term on the right-hand side of \eqref{4.32}, it follows that
%from \eqref{4.28} and \eqref{4.34}
\begin{align}   \label{4.35}
& \left(\mathcal{E}_{n,i}, \dot{\mathcal{E}}_{n,i}\right) \overset{\eqref{4.28}}{=} - \!\left(\mathcal{E}_{n,i},  \mathcal{L} \mathcal{E}_{n,i}\right)  \!-\! \left(\mathcal{E}_{n,i}, \mathcal{D}_{n,i} W(\bar{U}_{n,i}) \!+\! \mathcal{Z}_{n,i} \left[ W(\bar{u}_{n,i}) \!- W(\bar{U}_{n,i})\right]\right),  \nonumber \\
\overset{\eqref{4.34}}{\le} & - \left(\mathcal{E}_{n,i},  \mathcal{L} \mathcal{E}_{n,i}\right) \!+ \frac{1}{2}\left\| \mathcal{E}_{n,i}\right\|^2 \!+ C\left( \left|\mathcal{D}_{n,i} \right|^2 \!+ \sum_{i=1}^s \left\|E_{n-1,i} \right\|^2 \!+ \sum_{i=1}^r \left\|\hat{E}_i^{[n]} \right\|^2\right).
\end{align}
Substituting it into \eqref{4.32} yields for $n\le m$ that
\begin{align}     \label{4.36}
& \left\|\vec{\mathcal{E}}^{[n+1]}\right\|_{\vec{G}}^2 + 2\tau\sum_{i=1}^s h_i \left(\mathcal{E}_{n,i},  \mathcal{L} \mathcal{E}_{n,i}\right) \nonumber \\
\le \;&  \left\| \hat{\vec{E}}^{[n]} \right\|_{\vec{G}}^2 +    C \tau \sum_{i=1}^s \left( \left|\mathcal{D}_{n,i} \right|^2 + \left\| \mathcal{E}_{n,i}\right\|^2 + \left\|E_{n-1,i} \right\|^2\right)  + C\tau\sum_{i=1}^r \left\|\hat{E}_i^{[n]} \right\|^2.
\end{align}
Similarly,   using both the second equations in \eqref{4.26} and \eqref{4.27}  can derive
\begin{align}     \label{4.37}
\left\|\vec{\mathcal{D}}^{[n+1]}\right\|_{\vec{G}}^2 \le  \left\| \hat{\vec{D}}^{[n]} \right\|_{\vec{G}}^2 \!+    C \tau \sum_{i=1}^s \!\left( \left|\mathcal{D}_{n,i} \right|^2 \!+ \left\| \mathcal{E}_{n,i}\right\|^2 \!+ \left\|E_{n-1,i} \right\|^2\right) \! + C\tau\sum_{i=1}^r \left\|\hat{E}_i^{[n]} \right\|^2.
\end{align}
Combining \eqref{4.36} with \eqref{4.37} gives for $n\le m$ that
\begin{align}     \label{4.38}
& \left\|\vec{\mathcal{E}}^{[n+1]}\right\|_{\vec{G}}^2 +  \left\|\vec{\mathcal{D}}^{[n+1]}\right\|_{\vec{G}}^2  + 2\tau\sum_{i=1}^s h_i \left(\mathcal{E}_{n,i},  \mathcal{L} \mathcal{E}_{n,i}\right) \nonumber \\
\le \;& \! \left\| \hat{\vec{E}}^{[n]} \right\|_{\vec{G}}^2  + \left\| \hat{\vec{D}}^{[n]} \right\|_{\vec{G}}^2 + C\tau\left[ \sum_{i=1}^r \left\|\hat{E}_i^{[n]} \right\|^2  \!+ \sum_{i=1}^s \!\left( \left|\mathcal{D}_{n,i} \right|^2 \!+ \left\| \mathcal{E}_{n,i}\right\|^2 \!+ \left\|E_{n-1,i} \right\|^2\right)\right].
\end{align}
On the other hand,   testing the first relation in \eqref{4.26} with $\mathcal{E}_{n,i}$ yields
\begin{align*}
\sum_{i=1}^s \left\|  \mathcal{E}_{n,i}\right\|^2 \le \;& C\sum_{i=1}^r \left\| \hat{E}_i^{[n]}\right\| ^2 + C\tau \sum_{i,j=1}^s d_{ij}^{11}  \left(\mathcal{E}_{n,i}, \dot{\mathcal{E}}_{n,j}\right).
\end{align*}
Using \eqref{4.28} and \eqref{4.34} gives
\begin{align*}
& \sum_{i,j=1}^s \!d_{ij}^{11} \! \left(\mathcal{E}_{n,i}, \dot{\mathcal{E}}_{n,i}\right) \!
\overset{\eqref{4.28}}{=}\! -\!\sum_{i,j=1}^s \!d_{ij}^{11} \! \left(\mathcal{E}_{n,i}, \mathcal{L}\mathcal{E}_{n,j}\!\! +\! \mathcal{D}_{n,j} W(\bar{U}_{n,j}) \!+\! \mathcal{Z}_{n,j}\! \!\left[ W(\bar{u}_{n,j})\! - W(\bar{U}_{n,j})\right]\right)    \nonumber \\
\overset{{\eqref{4.34}}}{\le} & C\sum_{i=1}^s \left[ \left(\mathcal{E}_{n,i},  \mathcal{L} \mathcal{E}_{n,i}\right) \!+ \left|\mathcal{D}_{n,i} \right|^2  \!+ \left\| \mathcal{E}_{n,i}\right\|^2 \!+  \left\|E_{n-1,i} \right\|^2 \right] + C \sum_{i=1}^r \left\|\hat{E}_i^{[n]} \right\|^2.
\end{align*}
Thus, combining the last two inequalities derives
\begin{align}   \label{4.39}
\sum_{i=1}^s \left\|  \mathcal{E}_{n,i}\right\|^2
\le \;&  C\tau\sum_{i=1}^s \left[ \left(\mathcal{E}_{n,i},  \mathcal{L} \mathcal{E}_{n,i}\right) \!+ \left|\mathcal{D}_{n,i} \right|^2  \!+ \left\| \mathcal{E}_{n,i}\right\|^2 \!+  \left\|E_{n-1,i} \right\|^2 \right] \nonumber \\
& + C(1+\tau)\sum_{i=1}^r \left\|\hat{E}_i^{[n]} \right\|^2.
\end{align}
Also, in a similar way, we   obtain
\begin{align}   \label{4.40}
\sum_{i=1}^s \left|  \mathcal{D}_{n,i}\right|^2
\le\;&   C \tau \sum_{i=1}^s \left[ \left(\mathcal{E}_{n,i},  \mathcal{L} \mathcal{E}_{n,i}\right) \!+\left|\mathcal{D}_{n,i} \right|^2 + \left\| \mathcal{E}_{n,i}\right\|^2 + \left\|E_{n-1,i} \right\|^2\right] \nonumber \\
& + C\sum_{i=1}^r \left|\hat{D}_i^{[n]} \right|^2 + C\tau\sum_{i=1}^r \left\|\hat{E}_i^{[n]} \right\|^2.
\end{align}
Summing up  \eqref{4.39} and \eqref{4.40}    yields
\begin{align}    \label{4.41}
& \sum_{i=1}^s \!\left(\left\|  \mathcal{E}_{n,i}\right\|^2  \!+\! \left|  \mathcal{D}_{n,i}\right|^2\right)  \!\le\! C\sum_{i=1}^r \!\left( \left\|\hat{E}_i^{[n]} \right\|^2 \!+\! \left|\hat{D}_i^{[n]} \right|^2 \right) \!+\!  C \tau \sum_{i=1}^s \!\left[\left(\mathcal{E}_{n,i},  \mathcal{L} \mathcal{E}_{n,i}\right) \!+\! \left\|E_{n-1,i} \right\|^2 \right]  \nonumber \\
\le\; & C\left( \left\|\vec{\hat{E}}^{[n]} \right\|_{\vec{G}}^2 + \left\|\vec{\hat{D}}^{[n]} \right\|_{\vec{G}}^2\right) +  C \tau \sum_{i=1}^s \left[ h_i\left(\mathcal{E}_{n,i},  \mathcal{L} \mathcal{E}_{n,i}\right) \!+ \left\|E_{n-1,i} \right\|^2\right],
\end{align}
for sufficiently small $\tau$, where the equivalence between the weighted norm and the $L^2$ norm and the positivity of the weights $h_1,h_2,\cdots,h_s$ are used in the second inequality.
Inserting \eqref{4.41} to \eqref{4.38} gives
\begin{align}     \label{4.42}
& \left\|\vec{\mathcal{E}}^{[n+1]}\right\|_{\vec{G}}^2 +  \left\|\vec{\mathcal{D}}^{[n+1]}\right\|_{\vec{G}}^2  + 2\tau\sum_{i=1}^s h_i \left(\mathcal{E}_{n,i},  \mathcal{L} \mathcal{E}_{n,i}\right) \nonumber \\
\le \;& \!(1\!+\!C_1\tau)\left(\left\| \hat{\vec{E}}^{[n]} \right\|_{\vec{G}}^2  \!+\! \left\| \hat{\vec{D}}^{[n]} \right\|_{\vec{G}}^2\right) \!+  C_1\tau^2 \sum_{i=1}^s h_i\left(\mathcal{E}_{n,i},  \mathcal{L} \mathcal{E}_{n,i}\right) \!+  C_1\tau \sum_{i=1}^s \left\|E_{n-1,i}\right\|^2,
\end{align}
with a constant $C_1>0$. Multiplying \eqref{4.41} by $2C_1\tau$ and adding to \eqref{4.42} gives
\begin{align*}
& \left\|\vec{\mathcal{E}}^{[n+1]}\right\|_{\vec{G}}^2 +  \left\|\vec{\mathcal{D}}^{[n+1]}\right\|_{\vec{G}}^2  + 2\tau\sum_{i=1}^s h_i \left(\mathcal{E}_{n,i},  \mathcal{L} \mathcal{E}_{n,i}\right) + 2C_1\tau \sum_{i=1}^s \left(\left\|  \mathcal{E}_{n,i}\right\|^2  + \left|  \mathcal{D}_{n,i}\right|^2\right) \nonumber \\
\le \;& \!(1\!+\!C_2\tau)\left(\left\| \hat{\vec{E}}^{[n]} \right\|_{\vec{G}}^2  + \left\| \hat{\vec{D}}^{[n]} \right\|_{\vec{G}}^2\right) \!+  C_2\tau^2 \sum_{i=1}^s h_i\left(\mathcal{E}_{n,i},  \mathcal{L} \mathcal{E}_{n,i}\right)\! +  (C_1\!+\!C_2\tau)\tau \sum_{i=1}^s \left\|E_{n-1,i}\right\|^2,
\end{align*}
with a constant $C_2>0$. For sufficiently small $\tau$, the term $C_2\tau^2 \sum\limits_{i=1}^s h_i\left(\mathcal{E}_{n,i},  \mathcal{L} \mathcal{E}_{n,i}\right)$ can be absorbed by the left-hand side, and $C_1\!+\!C_2\tau \le 2C_1$. Hence, the above inequality is reduced to
\begin{align}   \label{4.44}
& \left\|\vec{\mathcal{E}}^{[n+1]}\right\|_{\vec{G}}^2 +  \left\|\vec{\mathcal{D}}^{[n+1]}\right\|_{\vec{G}}^2  + 2\tau\sum_{i=1}^s h_i \left(\mathcal{E}_{n,i},  \mathcal{L} \mathcal{E}_{n,i}\right) + 2C_1\tau \sum_{i=1}^s \left(\left\|  \mathcal{E}_{n,i}\right\|^2  + \left|  \mathcal{D}_{n,i}\right|^2\right) \nonumber \\
\le \;& \!(1\!+\!C_2\tau)\left[\left\| \hat{\vec{E}}^{[n]} \right\|_{\vec{G}}^2  + \left\| \hat{\vec{D}}^{[n]} \right\|_{\vec{G}}^2 \!+  2C_1\tau \sum_{i=1}^s \left(\left\|E_{n-1,i}\right\|^2 \!+\left|D_{n-1,i}\right|^2  \right) \right].
\end{align}
On the other hand, it follows from the Cauchy inequality that
\begin{align}   \label{4.45}
\left\|\hat{\vec{E}}^{[n+1]}\right\|_{\vec{G}}^2 + \left\|\hat{\vec{D}}^{[n+1]}\right\|_{\vec{G}}^2 \le \;& \left( 1\!+\!\frac{1}{\tau}\right)\left[  \left\| \hat{\vec{u}}(t_{n+1}) -\vec{\mathcal{U}}^{[n+1]}\right\|_{\vec{G}}^2  + \left\| \hat{\vec{z}}(t_{n+1}) -\vec{\mathcal{Z}}^{[n+1]}\right\|_{\vec{G}}^2\right],  \nonumber \\
& + (1\!+\!\tau)\left[ \left\|\vec{\mathcal{E}}^{[n+1]}\right\|_{\vec{G}}^2 + \left\|\vec{\mathcal{D}}^{[n+1]}\right\|_{\vec{G}}^2\right],
\end{align}
which implies by Theorem  \ref{Thm4.1} that
\begin{align}   \label{4.46}
\left\|\hat{\vec{E}}^{[n+1]}\right\|_{\vec{G}}^2 + \left\|\hat{\vec{D}}^{[n+1]}\right\|_{\vec{G}}^2 \le (1\!+\!\tau)\left( \left\|\vec{\mathcal{E}}^{[n+1]}\right\|_{\vec{G}}^2 + \left\|\vec{\mathcal{D}}^{[n+1]}\right\|_{\vec{G}}^2\right) + C \tau^{\min\{2\hat{q}+1,2\nu+1\}}.
\end{align}
Also,  we can obtain
\begin{align}   \label{4.47}
\sum_{i=1}^s \!\left(\left\|E_{n,i}\right\|^2 \!+ \left|D_{n,i}\right|^2 \right) \le (1\!+\!\tau)\sum_{i=1}^s\!\left( \left\|\mathcal{E}_{n,i}\right\|^2 \!+ \left|\mathcal{D}_{n,i}\right|^2\right) \!+ C\tau^{\min\{2\hat{q}+1,2\nu+1\}}.
\end{align}
Multiplying \eqref{4.47} by $2C_1\tau$ and adding to \eqref{4.46} yields
\begin{align}    \label{4.48}
& \left\|\hat{\vec{E}}^{[n+1]}\right\|_{\vec{G}}^2 +  \left\|\hat{\vec{D}}^{[n+1]}\right\|_{\vec{G}}^2 + 2C_1\tau  \sum_{i=1}^s \left(\left\|  E_{n,i}\right\|^2  + \left|  D_{n,i}\right|^2\right)  \nonumber  \\
\le\;& (1\!+\!\tau) \!\left[ \left\|\vec{\mathcal{E}}^{[n+1]}\right\|_{\vec{G}}^2 + \left\|\vec{\mathcal{D}}^{[n+1]}\right\|_{\vec{G}}^2 + 2C_1\tau \sum_{i=1}^s\!\left( \left\|\mathcal{E}_{n,i}\right\|^2 \!+ \left|\mathcal{D}_{n,i}\right|^2\right) \!\right] \! + C_3 \tau^{\min\{2\hat{q}+1,2\nu+1\}}   \nonumber \\
\overset{\eqref{4.44}}{\le} \;&  (1\!+\!C_3\tau)  \left[ \left\|\hat{\vec{E}}^{[n]} \right\|_{\vec{G}}^2 + \left\|\hat{\vec{D}}^{[n]} \right\|_{\vec{G}}^2 + 2C_1\tau \sum_{i=1}^s \left( \left\|E_{n-1,i} \right\|^2  + \left|D_{n-1,i} \right|^2\right)\right]   \nonumber  \\
& + C_3 \tau^{\min\{2\hat{q}+1,2\nu+1\}},
\end{align}
with some positive constant $C_3$. According to the sum formula of the geometric sequence and
the common inequality $(1+a)^n\le \exp(na)$, $\forall a\ge 0$, an induction to \eqref{4.48}  concludes
\begin{align}   \label{4.49}
& \left\|\hat{\vec{E}}^{[n+1]}\right\|_{\vec{G}}^2 +  \left\|\hat{\vec{D}}^{[n+1]}\right\|_{\vec{G}}^2 + 2C_1\tau  \sum_{i=1}^s \left(\left\|  E_{n,i}\right\|^2  + \left|  D_{n,i}\right|^2\right)   \nonumber \\
\le \;& C_4 \left[ \left\|\hat{\vec{E}}^{[1]}\right\|_{\vec{G}}^2 +  \left\|\hat{\vec{D}}^{[1]}\right\|_{\vec{G}}^2 + 2C_1\tau  \sum_{i=1}^s \left(\left\|  E_{0,i}\right\|^2  + \left|  D_{0,i}\right|^2\right) \right] + C_4\tau^{\min\{2\hat{q},2\nu\}}.
\end{align}
Considering the definition of the generalized stage order gives
\[  u_i(t_n) - \hat{U}_i(t_n) = \mathcal{O}(h^{\hat{q}}),~~~ z_i(t_n) - \hat{z}_i(t_n) = \mathcal{O}(h^{\hat{q}}),~~~ i =1,2,\cdots,r. \]
Finally, by \eqref{4.49} and the commonly used triangle inequality, it can be deduced
\begin{align}   \label{4.50}
& \left\|\vec{E}^{[n+1]}\right\|_{\vec{G}}^2 +  \left\|\vec{D}^{[n+1]}\right\|_{\vec{G}}^2 + 2C_1\tau  \sum_{i=1}^s \left(\left\|  E_{n,i}\right\|^2  + \left|  D_{n,i}\right|^2\right)   \nonumber \\
\le \;& C_5 \left[ \left\|\vec{E}^{[1]}\right\|_{\vec{G}}^2 +  \left\|\vec{D}^{[1]}\right\|_{\vec{G}}^2 + 2C_1\tau  \sum_{i=1}^s \left(\left\|  E_{0,i}\right\|^2  + \left|  D_{0,i}\right|^2\right) \right] + C_5\tau^{\min\{2\hat{q},2\nu\}}.
\end{align}
Thanks to the equivalence between the weighted norm and the $L^2$ norm,
  \eqref{4.50} implies  \eqref{4.25} for $n=m+1$.
Therefore, by the mathematical induction, the estimate \eqref{4.25} holds for all $1\le n\le K-1$. The proof is completed.
\end{proof}

When both the stage order  and method order of the GLTDs  \eqref{2.1.2} are $q$, their generalized stage orders are at least $\hat{q} = q$. Hence, Theorem \ref{Thm4.3} implies the following result directly.

\begin{corollary}   \label{Cor4.4}
Under the hypothesises $\mathcal{H}_1$ and $\mathcal{H}_2$, if the GLTDs \eqref{2.1.2} are algebraically stable and diagonally stable, their both  stage order and   method order are  $q$, then the discrete solutions derived by the schemes \eqref{2.2.6}-\eqref{2.2.8} satisfy
\begin{align}   \label{4.57}
\sum_{i=1}^r \left(\left\|E_i^{[n+1]}\right\|^2 + \left|D_i^{[n+1]}\right|^2\right)  + \tau\sum_{i=1}^s \left(\left\|E_{n,i}\right\|^2 + \left|D_{n,i}\right|^2\right)   \le C \tau^{\min\{2q,2\nu\}},
\end{align}
when the time stepsize $\tau$ is sufficiently small.
\end{corollary}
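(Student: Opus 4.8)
The plan is to obtain Corollary \ref{Cor4.4} as an immediate consequence of Theorem \ref{Thm4.3}; the only point that needs attention is how the generalized stage order $\hat{q}$ of Theorem \ref{Thm4.3} relates to the common value $q$ of the stage and method orders assumed here. First I would invoke the observation recorded just after Definition \ref{def2.4}: when a GLTD has stage order $q$ and method order $p=q$, the choice $\hat{u}_i(t_n)=u_i(t_n)$ (and, for the SAV component, $\hat{z}_i(t_n)=z_i(t_n)$) makes every local truncation quantity in \eqref{2.1.4} of size $\mathcal{O}(\tau^{q+1})$ and makes $u_i(t_n)-\hat{u}_i(t_n)=0$, so that the generalized stage order satisfies $\hat{q}\ge q$. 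Since the GLTD is assumed algebraically stable and diagonally stable and the hypotheses $\mathcal{H}_1,\mathcal{H}_2$ are in force, all assumptions of Theorem \ref{Thm4.3} hold with this $\hat{q}$.

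Next I would simply apply Theorem \ref{Thm4.3} to get
\[
\sum_{i=1}^r \left(\left\|E_i^{[n+1]}\right\|^2 + \left|D_i^{[n+1]}\right|^2\right) + \tau\sum_{i=1}^s \left(\left\|E_{n,i}\right\|^2 + \left|D_{n,i}\right|^2\right) \le C\,\tau^{\min\{2\hat{q},\,2\nu\}},
\]
and then use $\hat{q}\ge q$, which yields $\min\{2\hat{q},2\nu\}\ge\min\{2q,2\nu\}$; hence, whenever $\tau\le 1$ (in particular for $\tau$ sufficiently small) one has $\tau^{\min\{2\hat{q},2\nu\}}\le\tau^{\min\{2q,2\nu\}}$, and \eqref{4.57} follows with the same constant. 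Equivalently, one may re-inspect the proof of Theorem \ref{Thm4.3}: in the final estimate \eqref{4.50} the local-error contributions are $\mathcal{O}(\tau^{\min\{2\hat{q},2\nu\}})=\mathcal{O}(\tau^{\min\{2q,2\nu\}})$ by Lemma \ref{Lam4.1}, while the starting-value terms on the right-hand side of \eqref{4.50} are $\mathcal{O}(\tau^{q})$ by $\mathcal{H}_2$, so the entire right-hand side is $\mathcal{O}(\tau^{\min\{2q,2\nu\}})$.

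I expect no genuine obstacle here: the whole content is order bookkeeping. The single point deserving a line of care is verifying $\hat{q}\ge q$ from Definition \ref{def2.4} (and noting that the optional cancellation \eqref{2.1.6} can only raise, never lower, this lower bound, so it does not interfere), together with the remark that the starting-value accuracy demanded by $\mathcal{H}_2$ is exactly at order $q$, which already matches the target rate $\tau^{\min\{2q,2\nu\}}$ and therefore needs no strengthening.
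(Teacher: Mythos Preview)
Your proposal is correct and follows exactly the paper's own route: the paper states just before the corollary that when both stage and method order equal $q$ the generalized stage order satisfies $\hat q\ge q$, and then says Theorem~\ref{Thm4.3} implies the result directly. Your additional remarks about $\tau\le 1$ and the starting-value accuracy from $\mathcal{H}_2$ are harmless elaborations of the same argument.
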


In the following, we further investigate when the convergence orders of the SAV-GL schemes \eqref{2.2.6}-\eqref{2.2.8} are one higher than the stage order of the GLTDs.  Suppose that the GLTDs \eqref{2.1.2} have the stage order $q$ and method order $p=q+1$, then it follows that
\begin{align}   \label{4.53}
\sum_{i=1}^s \!\left( \left\|\eta_{n,i}\right\| + \left|\sigma_{n,i} \right|\right) \le C \tau^{\min\{q+1,\nu+1\}},~~ \sum_{i=1}^r \!\left( \left\|\eta_i^{[n]} \right\| + \left|\sigma_i^{[n]} \right|\right) \le C \tau^{\min\{q+2,\nu+1\}},
\end{align}
where the local errors $\eta_{n,i}, \eta_i^{[n]}, \sigma_{n,i}$ and $\sigma_i^{[n]}$ are defined by \eqref{4.1} and \eqref{4.2} with $\hat{u}_i(t_n) = u_i(t_n)$ and $\hat{z}_i(t_n)= z_i(t_n)$ for $i=1,2,\ldots, r$. Moreover, if the condition \eqref{2.1.6} holds, we have
\begin{align}
\label{4.54}
\eta_{n,i} \!- \kappa\tau^{q+1} u^{(q+1)}(t_n)  \!=\! \mathcal{O}(\tau^{q+2}),~\sigma_{n,i} \!- \kappa\tau^{q+1} z^{(q+1)}(t_n)  \!=\! \mathcal{O}(\tau^{q+2}),~~i\!=\!1,2,\cdots,s.
\end{align}
Hence, in \eqref{4.1}-\eqref{4.2}, we can take
\[ \hat{u}_i(t_n)  = u_i(t_n) + w_{i0} \kappa\tau^{q+1} u^{(q+1)}(t_n),~~~ \ \hat{z}_i(t_n)  = z_i(t_n) + w_{i0} \kappa\tau^{q+1} z^{(q+1)}(t_n), \]
such that
\begin{align*}
\sum_{i=1}^s \left( \left\|\eta_{n,i}\right\| + \left|\sigma_{n,i} \right|\right) \le C \tau^{\min\{q+2,\nu+1\}}.
\end{align*}
Therefore, by Theorem \ref{Thm4.3}, the following result is derived.

\begin{theorem}  \label{Thm4.4}
Under the hypothesises $\mathcal{H}_1$ and $\mathcal{H}_2$, if the GLTDs \eqref{2.1.2} are algebraically stable and diagonally stable, their stage order and method order are  $q$ and $q+1$, respectively, and the condition \eqref{2.1.6} holds, then the discrete solutions derived by the schemes \eqref{2.2.6}-\eqref{2.2.8} satisfy
\begin{align}   \label{4.56}
\sum_{i=1}^r \left(\left\|E_i^{[n+1]}\right\|^2 + \left|D_i^{[n+1]}\right|^2\right)  + \tau\sum_{i=1}^s \left(\left\|E_{n,i}\right\|^2 + \left|D_{n,i}\right|^2\right)   \le C \tau^{\min\{2q+2,2\nu\}},
\end{align}
when the time stepsize $\tau$ is sufficiently small.
\end{theorem}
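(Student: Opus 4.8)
The plan is to obtain Theorem~\ref{Thm4.4} as a corollary of Theorem~\ref{Thm4.3}, by arguing that under the present hypotheses the GLTDs \eqref{2.1.2} have generalized stage order $\hat{q}=q+1$. First I would recall that, since the stage order is $q$ and the method order is $p=q+1$, the truncation errors in \eqref{2.1.4} with $\hat{u}_i(t_n)=u_i(t_n)$ and $\hat{z}_i(t_n)=z_i(t_n)$ already satisfy $\rho_{n,i}=\mathcal{O}(\tau^{q+1})$ and $\rho_i^{[n]}=\mathcal{O}(\tau^{q+2})$. I would then use the extra hypothesis \eqref{2.1.6}, which provides a scalar $\kappa$ with $\rho_{n,i}-\kappa\tau^{q+1}u^{(q+1)}(t_n)=\mathcal{O}(\tau^{q+2})$, to replace $\hat{u}_i(t_n)$ and $\hat{z}_i(t_n)$ by the corrected quantities $u_i(t_n)+w_{i0}\kappa\tau^{q+1}u^{(q+1)}(t_n)$ and $z_i(t_n)+w_{i0}\kappa\tau^{q+1}z^{(q+1)}(t_n)$; invoking the consistency relations $\vec{D}_{12}\vec{w}_0=\vec{e}$, $\vec{D}_{22}\vec{w}_0=\vec{w}_0$ one checks that now $\rho_{n,i}=\mathcal{O}(\tau^{q+2})$ and $\rho_i^{[n]}=\mathcal{O}(\tau^{q+2})$, while $u_i(t_n)-\hat{u}_i(t_n)=\mathcal{O}(\tau^{q+1})$, so that by Definition~\ref{def2.4} the generalized stage order equals $q+1$.

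Next I would rerun Lemma~\ref{Lam4.1} with this choice of $\hat{u}_i,\hat{z}_i$: the proof there only substitutes $u_t$ from \eqref{2.2.4} into \eqref{4.1}--\eqref{4.2} and controls the nonlinear mismatch via $\|W(\bar{u}_{n,j})-W(u_{n,j})\|\le C\tau^{\nu}$ coming from the $\nu$-point extrapolation, so the local errors inherit $\sum_i(\|\eta_{n,i}\|+|\sigma_{n,i}|)\le C\tau^{\min\{q+2,\nu+1\}}$ and $\sum_i(\|\eta_i^{[n]}\|+|\sigma_i^{[n]}|)\le C\tau^{\min\{q+2,\nu+1\}}$, i.e. the bounds of Lemma~\ref{Lam4.1} with $\hat{q}$ replaced by $q+1$ --- it is precisely here that \eqref{2.1.6} is needed, to upgrade the stage local error from order $q+1$ to order $q+2$, cf. \eqref{4.53}--\eqref{4.54}. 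Since the proofs of Theorem~\ref{Thm4.1} and Theorem~\ref{Thm4.3} use the local errors only through $\tau^{\min\{\hat{q}+1,\nu+1\}}$, carrying $\hat{q}=q+1$ through \eqref{4.15}--\eqref{4.16} and the Gronwall-type induction ending at \eqref{4.50} delivers the stated bound $C\tau^{\min\{2q+2,2\nu\}}$.

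The step I expect to need the most care is the verification that the single scalar $\kappa$ in the correction $w_{i0}\kappa\tau^{q+1}u^{(q+1)}(t_n)$ is simultaneously compatible with both the stage and the output relations in \eqref{2.1.4}; this is where the consistency conditions enter, and it is the only genuinely new ingredient beyond Theorem~\ref{Thm4.3}. I would also be careful that the starting-error terms $\|\vec{E}^{[1]}\|_{\vec{G}}$, $\|\vec{D}^{[1]}\|_{\vec{G}}$ and $\|E_{0,i}\|,|D_{0,i}|$ appearing on the right of \eqref{4.50} are themselves $\mathcal{O}(\tau^{q+1})$, so as not to cap the global order at $\mathcal{O}(\tau^{q})$; this is supplied by $\mathcal{H}_2$ once the nonlinear SAV-RK initializer is taken of order at least $q+1$. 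Everything else is a routine propagation of the improved exponent through the estimates of Section~\ref{sec:4} already at hand.
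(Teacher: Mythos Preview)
Your proposal is correct and follows essentially the same route as the paper: show that under stage order $q$, method order $q+1$, and condition \eqref{2.1.6}, the generalized stage order is $\hat q=q+1$ (via the corrected choice $\hat u_i(t_n)=u_i(t_n)+w_{i0}\kappa\tau^{q+1}u^{(q+1)}(t_n)$ and the consistency relations), and then invoke Theorem~\ref{Thm4.3} with $\hat q=q+1$. The paper's argument is exactly this, stated in the paragraph preceding the theorem; note that you need not ``rerun'' Lemma~\ref{Lam4.1} since it is already phrased in terms of $\hat q$.
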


\begin{remark}
The above result shows the advantage of the generalized stage order that
 the convergence orders of the SAV-GL schemes \eqref{2.2.6}-\eqref{2.2.8} may be one higher than the stage order of the GLTDs \eqref{2.1.2}.
 %It is , which plays an important role to get the optimal convergence orders of some SAV-GL schemes, such as the scheme \eqref{3.2.9}.
\end{remark}

\subsection{Applications to the one-leg and MRK time discretization}

This subsection presents the convergence results for the special SAV-GL schemes  \eqref{3.2.1}, \eqref{3.2.9} and \eqref{3.2.13}-\eqref{3.2.15}
  as practical applications of Theorems \ref{Thm4.3} and \ref{Thm4.4}.

For the SAV-GL  scheme  \eqref{3.2.1}, where $\nu=2$, it can be checked that  the one-step one-leg time
discretization \eqref{2.1.8} has the generalized stage order  $\hat{q} = 2$ for $\theta = \frac{1}{2}$ and $\hat{q}=1$ for $\frac{1}{2}< \theta \le 1$ when it is written as a GLTD, so that using Theorem \ref{Thm4.3} can give the following results.

\begin{theorem}
Under the hypothesises $\mathcal{H}_1$ and $\mathcal{H}_2$,  if the time stepsize $\tau$ is sufficiently small, then the scheme \eqref{3.2.1}   has the following error estimates
\begin{align}   \label{4.2.2}
\left\|u(\cdot,t_{n+1}) - u^{n+1}\right\|^2 + \left|z(t_{n+1}) - z^{n+1}\right|^2   \le C \tau^2,
\end{align}
for $\frac{1}{2}<\theta \le 1$, and
%and for the scheme \eqref{3.2.1}  with $\theta = \frac{1}{2}$, the following error estimate holds
\begin{align}   \label{4.2.3}
\left\|u(\cdot,t_{n+1}) - u^{n+1}\right\|^2 + \left|z(t_{n+1}) - z^{n+1}\right|^2   \le C \tau^4,
\end{align}
for  $\theta = \frac{1}{2}$.
\end{theorem}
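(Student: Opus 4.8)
The plan is to obtain \eqref{4.2.2} and \eqref{4.2.3} as a direct application of Theorem \ref{Thm4.3} to the particular GLTD underlying scheme \eqref{3.2.1}. First I would recall from Example \ref{exp2.1} that the one-step one-leg discretization \eqref{2.1.8} is the GLTD \eqref{2.1.2} with $r=s=1$, coefficient matrices $\vec{D}_{11}=[\theta]$, $\vec{D}_{12}=[1]$, $\vec{D}_{21}=[1]$, $\vec{D}_{22}=[1]$, and $\vec{w}_0=1$, $\vec{w}_j=0$ for $j\ge1$; consequently the SAV-GL scheme \eqref{2.2.6}--\eqref{2.2.8} built on it is precisely \eqref{3.2.1}, the explicit stage value being supplied by the extrapolation $\bar u^{n+\theta}=(2-\theta)u^n-(1-\theta)u^{n-1}$.

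Next I would check the two structural hypotheses of Theorem \ref{Thm4.3}. Algebraic stability with $\vec{G}=1$ and $\vec{H}=1$ for every $\theta\in[\tfrac12,1]$ was already recorded in Section \ref{sec:3.2}, quoting \cite[Th.~3.3]{Dahlquist75B}. Diagonal stability is immediate: since $\vec{D}_{11}=[\theta]$ with $\theta>0$, the choice $\tilde{\vec{H}}=[1]$ makes $\tilde{\vec{H}}\vec{D}_{11}+\vec{D}_{11}^{T}\tilde{\vec{H}}=[2\theta]$ positive definite. The hypotheses $\mathcal{H}_1$ and $\mathcal{H}_2$ are assumed, the latter being realizable by computing the starting values with the nonlinear SAV-RK schemes (cf.\ Remark \ref{remark3.3}).

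The one step that requires a genuine (short) computation is the determination of the generalized stage order $\hat q$. For $\tfrac12<\theta\le1$ the method has stage order $q=1$ and method order $p=1$, so by the remark following Definition \ref{def2.4} the choice $\hat u_1(t_n)=u_1(t_n)=u(t_n)$, $\hat z_1(t_n)=z(t_n)$ already gives $\hat q\ge1$; moreover a Taylor expansion of the output truncation error in \eqref{2.1.4} shows its leading term $(\tfrac12-\theta)\tau^2u''(t_n)$ does not vanish, whence $\hat q=1$ exactly. For $\theta=\tfrac12$ the method has stage order $q=1$ and method order $p=2=q+1$; expanding the internal truncation error $\rho_{n,1}=u(t_n+\tfrac{\tau}{2})-\tfrac{\tau}{2}u'(t_n+\tfrac{\tau}{2})-u(t_n)$ about $t_n$ gives $\rho_{n,1}=-\tfrac18\tau^2u''(t_n)+\mathcal{O}(\tau^3)$, i.e.\ condition \eqref{2.1.6} holds with $\kappa=-\tfrac18$ and $u^{(q+1)}=u''$; hence, by that same remark, taking $\hat u_1(t_n)=u(t_n)-\tfrac18\tau^2u''(t_n)$ and $\hat z_1(t_n)=z(t_n)-\tfrac18\tau^2z''(t_n)$ raises the generalized stage order to $\hat q=q+1=2$. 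As for $\nu$: for $\theta=\tfrac12$ the two-point formula $\bar u^{n+1/2}=\tfrac32u^n-\tfrac12u^{n-1}$ is genuinely second order, so $\nu=2$, while for $\tfrac12<\theta\le1$ it is of accuracy order $\ge1$, which already suffices since there $2\hat q=2$ caps the rate.

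Finally I would invoke Theorem \ref{Thm4.3}, which for $r=1$ reads $\|E_1^{[n+1]}\|^2+|D_1^{[n+1]}|^2+\tau(\|E_{n,1}\|^2+|D_{n,1}|^2)\le C\tau^{\min\{2\hat q,2\nu\}}$. Since $w_{10}=1$ and $w_{1j}=0$ for $j\ge1$ one has $u_1(t_{n+1})=u(\cdot,t_{n+1})$ and $z_1(t_{n+1})=z(t_{n+1})$, hence $E_1^{[n+1]}=u(\cdot,t_{n+1})-u^{n+1}$ and $D_1^{[n+1]}=z(t_{n+1})-z^{n+1}$; discarding the nonnegative stage terms, the bound becomes $C\tau^{\min\{2,4\}}=C\tau^2$ for $\tfrac12<\theta\le1$, which is \eqref{4.2.2}, and $C\tau^{\min\{4,4\}}=C\tau^4$ for $\theta=\tfrac12$, which is \eqref{4.2.3}. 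The main obstacle in the whole argument is exactly the Crank--Nicolson case $\theta=\tfrac12$: one must verify \eqref{2.1.6} carefully (the Taylor identification of $\kappa=-\tfrac18$) so that the generalized stage order is genuinely $2$, since the claimed $\tau^4$ rate is precisely $\tau^{2\hat q}$ with $\hat q=2$; for $\theta>\tfrac12$ everything is routine bookkeeping on top of Theorem \ref{Thm4.3}.
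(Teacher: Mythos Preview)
Your proposal is correct and follows essentially the same route as the paper: verify algebraic and diagonal stability of the underlying one-step one-leg GLTD, determine its generalized stage order ($\hat q=1$ for $\tfrac12<\theta\le1$, $\hat q=2$ for $\theta=\tfrac12$ via condition \eqref{2.1.6}), and then invoke Theorem \ref{Thm4.3}. Your explicit Taylor computation showing $\kappa=-\tfrac18$ in the Crank--Nicolson case, and your observation that for $\theta\neq\tfrac12$ the particular extrapolation $(2-\theta)u^n-(1-\theta)u^{n-1}$ is only first-order accurate at $t_{n}+\theta\tau$ (which is still enough because $2\hat q=2$ already caps the rate), are in fact more careful than the paper, which simply records $\nu=2$ throughout; but the overall argument and the final appeal to Theorem \ref{Thm4.3} are the same.
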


\begin{remark}
The inequality \eqref{4.2.2} can be derived by Corollary \ref{Cor4.4}, since  both the stage order and method order of the  one-step one-leg time discretization \eqref{2.1.8} with $\frac{1}{2}<\theta \le 1$  are 1 when it is written as a GLTD.
However, for $\theta = \frac{1}{2}$, the stage order and method order of  \eqref{2.1.8}  are respectively 1 and 2 and the condition \eqref{2.1.6} holds so that  the inequality  \eqref{4.2.3} can be derived by Theorem \ref{Thm4.4}.
\end{remark}

\begin{remark}
An   error estimate was also derived in  \cite{ShenJ18b} for the scheme \eqref{3.2.1} with $\theta = 1$ and $\frac{1}{2}$, respectively. Although that result  can be extended to the scheme \eqref{3.2.1} for any $\theta\in [\frac{1}{2},1]$, it seems quite difficult to obtain the rigorous error estimate for the general SAV-GL schemes \eqref{2.2.6}-\eqref{2.2.8}.
\end{remark}

 For the SAV-GL  scheme \eqref{3.2.9}, where the number of extrapolation points is two,
 a simple calculation can show that the two-step one-leg time discretization \eqref{2.1.9} has the generalized stage order $\hat{q} = 2$ so that in terms of Theorem \ref{Thm4.3}, the following optimal error estimate can be obtained.

\begin{theorem}
Under the hypothesises $\mathcal{H}_1$ and $\mathcal{H}_2$, if the time stepsize $\tau$ is sufficiently small,  the scheme  \eqref{3.2.9} with $\gamma\ge 0$ and $\delta>0$ satisfies
\begin{align}   \label{4.2.4}
\left\|u(\cdot,t_{n+1}) - u^{n+1}\right\|^2 + \left|z(t_{n+1}) - z^{n+1}\right|^2   \le C \tau^4.
\end{align}
\end{theorem}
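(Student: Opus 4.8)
The statement is an application of Theorem~\ref{Thm4.3} to the scheme \eqref{3.2.9}, which is precisely the SAV-GL scheme \eqref{2.2.6}--\eqref{2.2.8} built on the two-step one-leg time discretization \eqref{2.1.9}, with the extrapolation $\bar u^{\,n+\gamma-1}=\bigl(1+\tfrac{\gamma}{2}\bigr)u^{n}-\tfrac{\gamma}{2}u^{n-1}$, i.e.\ $\nu=2$. So the plan has three ingredients: (i) put \eqref{2.1.9} in the GLTD form of Example~\ref{exp2.1} and check that it is both algebraically stable and diagonally stable for $\gamma\ge 0$, $\delta>0$; (ii) show that its generalized stage order is $\hat q=2$; (iii) feed $\hat q=2$ and $\nu=2$ into Theorem~\ref{Thm4.3} and read off the bound on the scalar components $u^{n+1},z^{n+1}$.

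For (i): writing \eqref{2.1.9} as \eqref{2.1.7} gives $\alpha_0=\tfrac{1+\gamma}{2}$, $\beta_0=\tfrac{1+\gamma+\delta}{4}$ together with the remaining $\alpha_j,\beta_j$, so all the GLTD coefficients of Example~\ref{exp2.1} are determined and $\vec D_{11}=[\beta_0/\alpha_0]$ is the $1\times1$ stage matrix. Algebraic stability, with $\vec H=1$ and the $2\times2$ matrix $\vec G$ displayed just before Theorem~\ref{Thm3.3}, is exactly \cite[Th.~3.3]{Dahlquist75B} for $\gamma\ge 0$, $\delta>0$ (equivalently, $A$-stability). Diagonal stability is immediate: with $\tilde{\vec H}=[1]$ one has $\tilde{\vec H}\vec D_{11}+\vec D_{11}^{T}\tilde{\vec H}=[\,2\beta_0/\alpha_0\,]$ and $\beta_0/\alpha_0=\tfrac{1+\gamma+\delta}{2(1+\gamma)}>0$. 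Hence both stability hypotheses of Theorem~\ref{Thm4.3} hold.

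For (ii): by Example~\ref{exp2.1} the GLTD \eqref{2.1.9} has $s=1$, $r=2$, stage order $q=1$ and method order $p=2=q+1$, with $w_{10}=1$, $w_{1j}=0$ $(j\ge1)$. Since $s=1$, the single stage truncation error $\rho_{n,1}$ in \eqref{2.1.4} (taking $\hat u_i(t_n)=u_i(t_n)$, $\hat z_i(t_n)=z_i(t_n)$) is $\mathcal{O}(\tau^{2})$ by stage consistency; a short Taylor expansion about $t_n$, in which the $\tau^{0}$ and $\tau^{1}$ contributions cancel by the consistency and stage-consistency relations, shows that its leading term is necessarily $\rho_{n,1}=\kappa\tau^{2}u''(t_n)+\mathcal{O}(\tau^{3})$ for an explicit constant $\kappa$ depending on $c_1=\beta_0-\beta_2=\tfrac{\gamma}{2}$ and the coefficients of the method. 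This is condition \eqref{2.1.6} with $q=1$, so by the Remark following Definition~\ref{def2.4}, replacing $\hat u_i(t_n)$ by $u_i(t_n)+w_{i0}\kappa\tau^{2}u''(t_n)$ (and $\hat z_i$ likewise) yields $\hat q=2$; equivalently one verifies Definition~\ref{def2.4} directly with these $\hat u_i,\hat z_i$.

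For (iii): with $\hat q=2$ and $\nu=2$, Theorem~\ref{Thm4.3} gives, under the standing hypotheses $\mathcal H_1,\mathcal H_2$, that $\sum_{i=1}^{2}\bigl(\|E_i^{[n+1]}\|^{2}+|D_i^{[n+1]}|^{2}\bigr)+\tau\bigl(\|E_{n,1}\|^{2}+|D_{n,1}|^{2}\bigr)\le C\tau^{\min\{4,4\}}=C\tau^{4}$. In the one-leg representation $u_i^{[n]}=u^{n+1-i}$, $z_i^{[n]}=z^{n+1-i}$, so $u_1^{[n+1]}=u^{n+1}$, $z_1^{[n+1]}=z^{n+1}$, while $u_1(t_{n+1})=u(\cdot,t_{n+1})$ and $z_1(t_{n+1})=z(t_{n+1})$ exactly (because $w_{10}=1$, $w_{1j}=0$ for $j\ge1$); hence $E_1^{[n+1]}=u(\cdot,t_{n+1})-u^{n+1}$ and $D_1^{[n+1]}=z(t_{n+1})-z^{n+1}$. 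Retaining only the $i=1$ terms on the left delivers \eqref{4.2.4}. The sole step that is not bookkeeping is (ii): pinning the generalized stage order at $2$ rather than $1$ is exactly what upgrades the plain Corollary~\ref{Cor4.4} bound $\tau^{\min\{2q,2\nu\}}=\tau^{2}$ to $\tau^{4}$; but with a single stage this reduces to identifying one scalar Taylor coefficient, so no real obstacle remains.
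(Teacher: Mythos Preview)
Your proposal is correct and follows essentially the same route as the paper: verify that \eqref{2.1.9} is algebraically and diagonally stable for $\gamma\ge 0$, $\delta>0$, establish that its generalized stage order is $\hat q=2$, and apply Theorem~\ref{Thm4.3} with $\hat q=\nu=2$. The paper simply asserts $\hat q=2$ via ``a simple calculation'' and then notes in the subsequent remark that condition~\eqref{2.1.6} together with Theorem~\ref{Thm4.4} gives the same conclusion; your argument for (ii) makes this explicit by observing that with a single stage the leading term of $\rho_{n,1}$ is automatically of the form $\kappa\tau^{2}u''(t_n)$, so \eqref{2.1.6} is trivially satisfied.
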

%As far as we know, there is no result on the error estimate of the scheme \eqref{3.2.9} in the literature. %Here, thanks to the generalized stage order, the optimal convergence order of the scheme  \eqref{3.2.9} is derived.
\begin{remark}
The estimate \eqref{4.2.4}  can also be derived by Theorem \ref{Thm4.4}, since  the stage order and the method order of the time discretization \eqref{2.1.9} are 1 and 2, respectively when it is written as a GLTD, and the condition \eqref{2.1.6} holds.
\end{remark}

%\begin{remark}
%As far as we know, there is no result on the error estimate of the scheme \eqref{3.2.9} in the literature. Here, thanks to the generalized stage order, the optimal convergence order of the scheme  \eqref{3.2.9} is derived.
%\end{remark}

For the MRK time discretization \eqref{2.1.10}, due to the simplified conditions $B(s)$ and $C(s)$,
its generalized stage order $\hat{q}$ is at least 2,
when the integer $s = 1$ and the number of extrapolation points $\nu = 2$,
while it is at least $ s$  when the integer $s \ge 2$ and the number of extrapolation points $\nu = s$.
Therefore,  using Theorem \ref{Thm4.3} can
 conclude the following error estimate for the SAV-GL scheme \eqref{3.2.13}-\eqref{3.2.15}.

\begin{theorem}   \label{Thm4.8}
Under the hypothesises $\mathcal{H}_1$ and $\mathcal{H}_2$,  if the MRK time discretization is algebraically stable and diagonally stable, and the conditions $B(s)$ and $C(s)$ hold, then   the SAV-GL  scheme \eqref{3.2.13}-\eqref{3.2.15} satisfies the following error estimates
\begin{align}   \label{4.2.8}
\left\|u(\cdot,t_{n+1}) - u^{n+1}\right\|^2 + \left|z(t_{n+1}) - z^{n+1}\right|^2 \le C \tau^{4}, ~~\mbox{for}~~ s = 1,
\end{align}
and
\begin{align}   \label{4.2.9}
\left\|u(\cdot,t_{n+1}) - u^{n+1}\right\|^2 + \left|z(t_{n+1}) - z^{n+1}\right|^2 \le C \tau^{2s},~~\mbox{for}~~ s \ge 2,
\end{align}
when the time stepsize $\tau$ is sufficiently small.
\end{theorem}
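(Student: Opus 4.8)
The plan is to obtain Theorem~\ref{Thm4.8} as an immediate specialization of Theorem~\ref{Thm4.3}, so that the argument is essentially bookkeeping once the generalized stage order of the MRK scheme is pinned down. First I would recall from Example~\ref{exp2.2} that the $s$-stage $r$-step MRK discretization \eqref{2.1.10}, upon setting $u_i^{[n]}=u^{n+1-i}$ (so that $u_1^{[n]}=u^n$ and $z_1^{[n]}=z^n$), is a GLTD \eqref{2.1.2} with the explicit matrices $\vec{D}_{\imath\jmath}$ listed there; hence \eqref{3.2.13}--\eqref{3.2.15} is exactly the SAV-GL scheme \eqref{2.2.6}--\eqref{2.2.8} for that GLTD, and the Lagrange formula used for $\bar{U}_{n,i}$, together with $\bar{U}_{n,1}=(1+c_1)u^n-c_1u^{n-1}$ when $s=1$, is a $\nu$-point extrapolation in the sense of Section~\ref{sec:2.2} and Lemma~\ref{Lam4.1}, with $\nu=s$ for $s\ge2$ and $\nu=2$ for $s=1$ (and $\nu\le s+r$ in both cases). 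Since the MRK method is assumed algebraically and diagonally stable, the structural hypotheses of Theorem~\ref{Thm4.3} hold, while $\mathcal{H}_1$ and $\mathcal{H}_2$ are in force by assumption, the accurate starting values being computable by the nonlinear SAV-RK schemes (cf. Remark~\ref{remark3.3}). Theorem~\ref{Thm4.3} then yields
\[
\sum_{i=1}^r\left(\|E_i^{[n+1]}\|^2+|D_i^{[n+1]}|^2\right)+\tau\sum_{i=1}^s\left(\|E_{n,i}\|^2+|D_{n,i}|^2\right)\le C\tau^{\min\{2\hat q,\,2\nu\}},
\]
and keeping only the $i=1$ term of the first sum, together with $u^{n+1}=u_1^{[n+1]}$, $z^{n+1}=z_1^{[n+1]}$, $u_1(t_{n+1})=u(\cdot,t_{n+1})$ and $z_1(t_{n+1})=z(t_{n+1})$, reduces the claim to computing $\hat q$.

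The step with real content is therefore to verify $\hat q\ge s$ for $s\ge2$ and $\hat q\ge2$ for $s=1$. For $s\ge2$ I would argue directly from the local-error relations \eqref{2.1.4}: the condition $C(s)$ says precisely that the MRK has stage order $q=s$, while $B(s)$ gives method order at least $s$, so choosing $\hat u_i(t_n)=u_i(t_n)$ and $\hat z_i(t_n)=z_i(t_n)$ makes $\rho_{n,i}=\mathcal{O}(\tau^{s+1})$, $\rho_i^{[n]}=\mathcal{O}(\tau^{s+1})$ and $u_i(t_n)-\hat u_i(t_n)=0$; this is exactly the observation following Definition~\ref{def2.4} that $p\ge q=s$ forces $\hat q\ge s$. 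For $s=1$ the sharper mechanism of the Remark after Definition~\ref{def2.4} is needed: for the admissible one-stage families the stage order is $q=1$ but the method order is $p=2$, so a single Taylor expansion of $\rho_{n,1}$ produces a constant $\kappa$ with $\rho_{n,1}-\kappa\tau^2u^{(2)}(t_n)=\mathcal{O}(\tau^3)$ (and similarly for $\sigma_{n,1}$), and then taking $\hat u_1(t_n)=u_1(t_n)+w_{10}\kappa\tau^2u^{(2)}(t_n)$, $\hat z_1(t_n)=z_1(t_n)+w_{10}\kappa\tau^2z^{(2)}(t_n)$ raises the generalized stage order to $\hat q=2$ --- the same device already used for the one-leg schemes \eqref{2.1.8} and \eqref{2.1.9} earlier in this section. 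Inserting these values of $\hat q$ and $\nu$ into the displayed estimate gives $\min\{2\hat q,2\nu\}\ge4$ when $s=1$ and $\min\{2\hat q,2\nu\}\ge2s$ when $s\ge2$, which are \eqref{4.2.8} and \eqref{4.2.9}.

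The main obstacle I expect is the $s=1$ case: one must be careful that the bound $\hat q\ge2$ is not automatic from $B(1)$, $C(1)$ plus algebraic and diagonal stability alone, but rests on the relevant one-stage MRK families having method order $p=2=q+1$ and on condition \eqref{2.1.6} holding with an explicit $\kappa$; I would isolate this as a clean sub-statement, paralleling the one-leg discussion, rather than gloss over it. Everything else --- translating the MRK coefficients into the GLTD data of Example~\ref{exp2.2}, checking that the Lagrange interpolation meets the $\nu$-point accuracy required in Lemma~\ref{Lam4.1}, and reading off the exponent from Theorem~\ref{Thm4.3} --- is routine.
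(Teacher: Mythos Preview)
Your proposal is correct and follows essentially the same route as the paper: recast \eqref{3.2.13}--\eqref{3.2.15} as a SAV-GL scheme via Example~\ref{exp2.2}, determine the generalized stage order ($\hat q\ge s$ for $s\ge2$ from $C(s)$ and $B(s)$, $\hat q\ge2$ for $s=1$ via the $p=q+1$ mechanism of the Remark after Definition~\ref{def2.4}) together with the extrapolation count $\nu$, and then read off the estimate from Theorem~\ref{Thm4.3}. You are in fact more explicit than the paper about the $s=1$ case, correctly flagging that $\hat q\ge2$ there is not a consequence of $B(1)$, $C(1)$ alone but requires the one-stage MRK family to have method order $p=2$ and to satisfy \eqref{2.1.6}; the paper simply asserts $\hat q\ge2$ in that case without isolating this extra ingredient.
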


\begin{remark}
The error estimates \eqref{4.2.8} and \eqref{4.2.9} of the SAV-GL schemes \eqref{3.2.13}-\eqref{3.2.15} can be reduced to those  for the Allen-Cahn equation in \cite{Akrivis19}, which was deduced with a different technique.
\end{remark}

 \begin{remark}  \label{remark4.6}
The inequality \eqref{4.2.9} shows that the SAV-GL schemes \eqref{3.2.13}-\eqref{3.2.15} with the integer $s\ge 2$  and the number of extrapolation points $\nu = s$ are convergent with the order of $s$ for the gradient flows. However, numerical experiments in Section \ref{sec:6} will show that the SAV-GL schemes \eqref{3.2.13}-\eqref{3.2.15} can be convergent with the order of $s+1$
 when adding an extrapolation point, i.e., $\nu = s+1$.
\end{remark}

\section{Spatial discretization}  \label{sec:5}

This section introduces the Fourier spectral spatial discretization of {the SAV-GL schemes \eqref{2.2.6}-\eqref{2.2.8}}  to derive
the fully discrete SAV-GL schemes for the  gradient flows with the periodic boundary conditions in order to conduct our numerical validation in next section.
%focuses on the space discretization and derives the fully discrete SAV-GL schemes for solving gradient flows.
Because the proof of the energy stability in Section \ref{sec:3}  is variational and the energy stability is available for the boundary conditions which make all boundary terms  disappear when the integration by parts is performed,
the above results on the energy stability can be straightforwardly extended to the fully discrete SAV-GL schemes with the Galerkin finite element  or  the spectral methods  or  the finite
difference methods, satisfying the summation by parts for the spatial discretization.

Assume that the domain $\Omega = (0,L)\times(0,L)$  is uniformly partitioned into
\[  \Omega_h = \{(x_i,y_j)|x_i = ih,y_j = jh,~0\le i,j\le N-1\},  \]
with $h = \frac{L}{N}$ and $N\in \mathbb{Z}^+$  (assumed even).
Temporarily ignore the time dependence of function $u$ etc in {\eqref{2.2.6}-\eqref{2.2.8}}.
The Fourier spectral spatial discretization is a function-space method that approximates an arbitrary discrete periodic function $u(x_i,y_j)$ defined on $\Omega_h$ by a
finite sum of $N^2$ complex exponentials

\begin{equation*}
 u(x_i,y_j) \approx u_N(x_i,y_j)  =  \sum_{m,l=-{N}/{2}}^{{N}/{2}-1} \widehat{u}_{m,l} e^{\imath \xi_m x_i}e^{\imath \eta_l y_j},\quad 0\le i,j\le N-1,
 \end{equation*}
where   $\imath = \sqrt{-1}$, $\xi_m = 2\pi m/L$, $\eta_l = 2\pi l/L$,  and
$\widehat{u}_{m,l}$ are  the (discrete) Fourier coefficients calculated by the discrete Fourier transform
$$ \widehat{u}_{m,l} = \frac{1}{N^2} \sum_{i,j=0}^{N-1} u(x_i,y_j) e^{-\imath(\xi_m x_i + \eta_l y_j)}.
$$
%Moreover, we can use $\frac{\partial^{n} u_N}{\partial x^n}$ and  $\frac{\partial^n u_N}{\partial y^n}$ to approximate $\frac{\partial^n u}{\partial x^n}$ and  $\frac{\partial^n u}{\partial y^n}$, respectively. Specifically, this generates the following Fourier differentiation for $n=2$:
%\[ \frac{\partial^2 u}{\partial x^2}(x_i,y_j) \approx -\mathbb{F}^{-1}(\xi^2\mathbb{F}(u))(x_i,y_j) = -\sum_{m,l=-{N}/{2}}^{{N}/{2}-1} \xi_m^2\hat{u}_{m,l} e^{\imath \xi_m x_i}e^{\imath \eta_l y_j},\]
%\[  \frac{\partial^2 u}{\partial y^2}(x_i,y_j) \approx -\mathbb{F}^{-1}(\eta^2\mathbb{F}(u))(x_i,y_j) = -\sum_{m,l=-{N}/{2}}^{{N}/{2}-1} \eta_l^2\hat{u}_{m,l} e^{\imath \xi_m x_i}e^{\imath \eta_l y_j}.  \]
%Note that for the Fourier differentiation, the discrete integration by parts holds.

%Based on the above description, in the following, the fully discrete SAV-GL schemes for solving the gradient flows are presented.
Let $ \mathcal{V}_h := \left\{\vec{v}=(v_{ij}), v_{ij}\in\mathbb R,0\le i,j\le N-1\right\}$ be the grid function space defined on $\Omega_h$, and assume that $\vec{u}_i^{[n]}\in\mathcal{V}_h$ and $z_i^{[n]}\in \mathbb{R}$ are given for $i=1,2,\cdots,r$.
Applying the discrete Fourier transform  to the semi-discrete SAV-GL schemes \eqref{2.2.6}-\eqref{2.2.8} yields
\begin{align}  \label{6.1}
\begin{cases}
\widehat{\vec{U}}_{n,i} = \tau \sum\limits_{j=1}^s d_{ij}^{11} \dot{\widehat{\vec{U}}}_{n,j} + \sum\limits_{j=1}^r d_{ij}^{12} \widehat{\vec{u}}_j^{[n]},  \\[2 \jot]
Z_{n,i} = \tau \sum\limits_{j=1}^s d_{ij}^{11} \dot{Z}_{n,j} + \sum\limits_{j=1}^r d_{ij}^{12} z_j^{[n]},~~~  i =1,2,\cdots,s,
\end{cases}
\end{align}
and
\begin{align}   \label{6.2}
\begin{cases}
\widehat{\vec{u}}_i^{[n+1]} = \tau \sum\limits_{j=1}^s d_{ij}^{21} \dot{\widehat{\vec{U}}}_{n,j} + \sum\limits_{j=1}^r d_{ij}^{22} \widehat{\vec{u}}_j^{[n]},  \\[2 \jot]
z_i^{[n+1]} = \tau \sum\limits_{j=1}^s d_{ij}^{21} \dot{Z}_{n,j} + \sum\limits_{j=1}^r d_{ij}^{22} z_j^{[n]},~~~ i =1,2,\cdots,r,
\end{cases}
\end{align}
where $\widehat{\vec{U}}_{n,i}$  and $\widehat{\vec{u}}_i^{[n]}$ are the discrete  Fourier coefficients of $\vec{U}_{n,i}$ and $\vec{u}_i^{[n]} \in \mathcal{V}_h$, respectively,  $Z_{n,i}$ and $z_i^{[n+1]}$ are used as the same as the symbols in the semi-discrete scheme \eqref{2.2.6}-\eqref{2.2.8}, since those quantities are not changed when applying the discrete Fourier transform,  and
\begin{align}  \label{6.3}
\dot{\widehat{\vec{U}}}_{n,i} \!=\! \mathcal{G}_h\!\circ\!\widehat{\vec{\mu}}_{n,i}, ~\widehat{\vec{\mu}}_{n,i} \!=\! \mathcal{L}_h\!\circ\!\widehat{\vec{U}}_{n,i} + Z_{n,i} \widehat{W}(\bar{\vec{U}}_{n,i}),~\dot{Z}_{n,i} \!=\! \frac{1}{2} \left\langle \widehat{W}(\bar{\vec{U}}_{n,i}),  \dot{\widehat{\vec{U}}}_{n,i}\right\rangle, ~ i \!=\! 1,2,\cdots,s,
\end{align}
where ``$\circ$'' denotes the Shur product symbol, $\left\langle\cdot,\cdot\right\rangle$ is the discrete $L^2$  inner product defined by $\left\langle\vec{\phi},\vec{\psi}\right\rangle := h^2\sum\limits_{i,j=0}^{N-1} \vec{\phi}_{ij}\vec{\psi}_{ij} $ for any $\vec{\phi},\vec{\psi} \in \mathcal{V}_h$, and
$\mathcal{G}_h$ and $\mathcal{L}_h$ are the analytical formulas of the operators $\mathcal{G}$ and $\mathcal{L}$ in the discrete Fourier space.
Specifically, $\mathcal{G}_h$ is a $N\times N$ matrix with the elements $\mathcal{G}_h(m,l) = -(\xi_m^2+\eta_l^2)$ (resp. $-1$) for the $H^{-1}$ (resp. $L^2$) gradient flow, and $\mathcal{L}_h$ also is a $N\times N$ matrix with the elements $\mathcal{L}_h(m,l) = -\alpha(\xi_m^2+\eta_l^2) + \beta$ for the case of $\mathcal{L} = \alpha \Delta + \beta$ with $\alpha>0,\beta\ge 0$.
Once $\{\widehat{\vec{U}}_{n,i}$, $\widehat{\vec{u}}_i^{[n+1]}\}$ are known by solving \eqref{6.1}-\eqref{6.3}, one can compute the numerical solutions $\{\vec{U}_{n,i}$, $\vec{u}_i^{[n+1]} \in \mathcal{V}_h\}$   by using the discrete inverse Fourier transform.
Similar to the semi-discrete SAV-GL schemes \eqref{2.2.6}-\eqref{2.2.8}, the fully discrete schemes \eqref{6.1}-\eqref{6.3} are also unconditionally energy stable.

\begin{theorem}  \label{Thm6.1}
If the GL time discretizations \eqref{2.1.2} are algebraically stable with a symmetric and positive definite matrix $\vec{G}=(g_{ij})\in\mathbb{R}^{r\times r}$, then
the fully discrete SAV-GL schemes \eqref{6.1}-\eqref{6.3} satisfy the following energy decay property
\begin{align}   \label{6.5}
\frac{1}{2} \sum_{i,j}^r g_{ij} \left\langle \mathcal{L}_h\circ \widehat{\vec{u}}_i^{[n+1]}, \widehat{\vec{u}}_j^{[n+1]}\right\rangle + \left\| \vec{z}^{[n+1]}\right\|_{\vec{G}}^2  \le
\frac{1}{2} \sum_{i,j}^r g_{ij}\left\langle \mathcal{L}_h \circ \widehat{\vec{u}}_i^{[n]}, \widehat{\vec{u}}_j^{[n]}\right\rangle + \left\| \vec{z}^{[n]}\right\|_{\vec{G}}^2.
\end{align}
\end{theorem}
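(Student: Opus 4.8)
The plan is to repeat the variational argument of Theorem~\ref{Thm3.1} essentially verbatim, after observing that its proof uses only four structural ingredients: the algebraic stability of the GLTDs (which, by Definition~\ref{def2.1}, furnishes a symmetric positive definite $\vec{G}$ and a non-negative diagonal $\vec{H}=\mathrm{diag}(h_1,\dots,h_s)$ rendering $\vec{M}$ non-negative definite), the self-adjointness $(\mathcal{L}u,v)=(u,\mathcal{L}v)$, the positive semi-definiteness of $\mathcal{L}$, and the non-positivity of $\mathcal{G}$. Each of these has an exact discrete Fourier analogue, because $\mathcal{L}_h\circ(\cdot)$ and $\mathcal{G}_h\circ(\cdot)$ are Fourier multipliers whose symbols $\mathcal{L}_h(m,l)$ and $\mathcal{G}_h(m,l)$ are real and even in $(m,l)$, so that $\langle\mathcal{L}_h\circ\widehat{\vec{\phi}},\widehat{\vec{\psi}}\rangle=\langle\widehat{\vec{\phi}},\mathcal{L}_h\circ\widehat{\vec{\psi}}\rangle$, $\langle\mathcal{L}_h\circ\widehat{\vec{\phi}},\widehat{\vec{\phi}}\rangle\ge 0$ (the symbol being non-negative, inherited from $\mathcal{L}$), and $\langle\mathcal{G}_h\circ\widehat{\vec{\phi}},\widehat{\vec{\phi}}\rangle\le 0$. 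Equivalently, by the discrete Parseval identity the inner product $\langle\cdot,\cdot\rangle$ on $\mathcal{V}_h$ transfers to the Fourier side, and \eqref{6.1}-\eqref{6.3} is just the scheme \eqref{2.2.6}-\eqref{2.2.8} with $\mathcal{L},\mathcal{G}$ replaced by the real, symmetric operators $\mathcal{L}_h,\mathcal{G}_h$; so one may run the argument in either space.

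The steps are as follows. First, using the first identities in \eqref{6.1} and \eqref{6.2} together with the discrete self-adjointness of $\mathcal{L}_h$, reproduce the algebraic manipulation \eqref{3.2}-\eqref{3.3} with $(\mathcal{L}\,\cdot,\cdot)$, $U_{n,i}$, $u_i^{[n]}$, $\dot{U}_{n,i}$ replaced by $\langle\mathcal{L}_h\circ\cdot,\cdot\rangle$, $\widehat{\vec{U}}_{n,i}$, $\widehat{\vec{u}}_i^{[n]}$, $\dot{\widehat{\vec{U}}}_{n,i}$ respectively; the matrices $\vec{P},\vec{S},\vec{Q}$ assemble, exactly as before, into the non-negative definite $\vec{M}$, and since $\mathcal{L}_h$ is positive semi-definite the quadratic form in $\vec{M}$ drops out, giving $\sum_{i,j}^r g_{ij}\langle\mathcal{L}_h\circ\widehat{\vec{u}}_i^{[n+1]},\widehat{\vec{u}}_j^{[n+1]}\rangle\le\sum_{i,j}^r g_{ij}\langle\mathcal{L}_h\circ\widehat{\vec{u}}_i^{[n]},\widehat{\vec{u}}_j^{[n]}\rangle+2\tau\sum_{i=1}^s h_i\langle\mathcal{L}_h\circ\widehat{\vec{U}}_{n,i},\dot{\widehat{\vec{U}}}_{n,i}\rangle$, the discrete counterpart of \eqref{3.4}. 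In the same way the second identities in \eqref{6.1}-\eqref{6.2} yield $\|\vec{z}^{[n+1]}\|_{\vec{G}}^2\le\|\vec{z}^{[n]}\|_{\vec{G}}^2+2\tau\sum_{i=1}^s h_i Z_{n,i}\dot{Z}_{n,i}$, into which one substitutes $\dot{Z}_{n,i}=\tfrac12\langle\widehat{W}(\bar{\vec{U}}_{n,i}),\dot{\widehat{\vec{U}}}_{n,i}\rangle$ from \eqref{6.3}. Finally, taking the discrete inner product of $\widehat{\vec{\mu}}_{n,i}=\mathcal{L}_h\circ\widehat{\vec{U}}_{n,i}+Z_{n,i}\widehat{W}(\bar{\vec{U}}_{n,i})$ with $\dot{\widehat{\vec{U}}}_{n,i}$ and inserting the result, one adds half the $\vec{u}$-inequality to the $\vec{z}$-inequality; the terms $Z_{n,i}\langle\widehat{W}(\bar{\vec{U}}_{n,i}),\dot{\widehat{\vec{U}}}_{n,i}\rangle$ cancel exactly, and what remains is the right-hand side of \eqref{6.5} plus $\tau\sum_{i=1}^s h_i\langle\widehat{\vec{\mu}}_{n,i},\dot{\widehat{\vec{U}}}_{n,i}\rangle=\tau\sum_{i=1}^s h_i\langle\widehat{\vec{\mu}}_{n,i},\mathcal{G}_h\circ\widehat{\vec{\mu}}_{n,i}\rangle$, which is $\le 0$ because $h_i\ge 0$ and $\mathcal{G}_h$ is non-positive. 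This is \eqref{6.5}.

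I do not expect a genuine obstacle: the proof is the one of Theorem~\ref{Thm3.1} transcribed into the discrete Fourier setting, and the only point that truly needs verification is that $\mathcal{L}_h$ and $\mathcal{G}_h$ inherit, with respect to $\langle\cdot,\cdot\rangle$, the self-adjointness, positive semi-definiteness, and non-positivity of $\mathcal{L}$ and $\mathcal{G}$, which is immediate from the explicit symbols recorded after \eqref{6.3} (real, even in the wavenumbers, with the appropriate sign). If one wishes to avoid manipulating complex Fourier coefficients, the cleanest route is to note, by linearity of the discrete Fourier transform and the explicit treatment of the nonlinear term, that \eqref{6.1}-\eqref{6.3} is equivalent to the physical-space fully discrete scheme obtained from \eqref{2.2.6}-\eqref{2.2.8} by replacing $\mathcal{L},\mathcal{G}$ with the real, symmetric, diagonalizable grid operators $\mathcal{L}_h,\mathcal{G}_h$ on $\mathcal{V}_h$; the proof of Theorem~\ref{Thm3.1} then applies word for word and \eqref{6.5} follows by the discrete Parseval identity.
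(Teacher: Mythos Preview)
Your proposal is correct and matches the paper's approach: the paper simply states that the proof of Theorem~\ref{Thm6.1} is similar to that of Theorem~\ref{Thm3.1} and omits it, and your transcription of the Theorem~\ref{Thm3.1} argument with $(\mathcal{L}\cdot,\cdot)$, $\mathcal{G}$ replaced by their discrete Fourier multipliers $\langle\mathcal{L}_h\circ\cdot,\cdot\rangle$, $\mathcal{G}_h$ is exactly what is intended. The only checks beyond Theorem~\ref{Thm3.1} are the self-adjointness and sign properties of $\mathcal{L}_h,\mathcal{G}_h$ with respect to $\langle\cdot,\cdot\rangle$, which you correctly identify as immediate from the explicit real symbols given after \eqref{6.3}.
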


\begin{remark}
The proof of Theorem \ref{Thm6.1} is similar to that of Theorem \ref{Thm3.1} so that it is skipped here to avoid repetition.
Due to the discrete Parseval equality, the inequality \eqref{6.5} can be extended for $\{\vec{U}_{n,i}$, $\vec{u}_i^{[n+1]}\}$.
The Fourier spectral discretization in \eqref{6.1}-\eqref{6.3} can be directly applied to the special semi-discrete schemes \eqref{3.2.1}, \eqref{3.2.9} and \eqref{3.2.13}-\eqref{3.2.15} with the energy decay deduced from Theorem \ref{Thm6.1}.
\end{remark}

\begin{remark}
This paper does not focus on the error estimates of the fully discrete schemes \eqref{6.1}-\eqref{6.3}.
%, which is remained for readers with interest.
The readers are referred to \cite{LiX20,ChengH20}, in which  the error estimates of the fully discrete SAV-BDF1 and SAV-CN schemes with the Fourier spectral or  finite-element discretization in space are addressed.
\end{remark}

Before ending this section, the implementation  of the fully discrete SAV-GL schemes \eqref{6.1}-\eqref{6.2} is outlined here for the case of that the GLTDs are diagonally stable. For any $sN\times sN$ matrix $\vec{\Phi}=(\vec{\phi}_{ij})$ with $\vec{\phi}_{ij}\in \mathbb{R}^{N\times N}, i,j=1,2,\ldots,s$, and $sN\times N$ matrix $\vec{\Psi}=(\vec{\psi}_s,\cdots,\vec{\psi}_s)^T$ with  $\vec{\psi}_i \in \mathbb{R}^{N\times N}, i =1,2,\ldots,s$, we define the $sN\times N$ matrix $\vec{V} =(\vec{v}_1,\cdots,\vec{v}_s)^T:= \vec{\Phi}\bullet \vec{\Psi} $ with  $\vec{v}_i = \sum\limits_{j=1}^s \vec{\phi}_{ij} \circ \vec{\psi}_j \in \mathbb{R}^{N\times N}, i =1,2,\ldots,s$.

It can be deduced from \eqref{6.1} and \eqref{6.3} that
\begin{align}   \label{6.6}
& \left[\tau^{-1} \vec{D}_{11}^{-1}\!\otimes\! \vec{\mathcal{I}}_N  - \left(\mathcal{G}_h\!\circ\!\mathcal{L}_h\right)\!\otimes\! \vec{I}_s\right] \bullet\widehat{\vec{U}}^n =  \tau^{-1}\left[ \left(\vec{D}_{11}^{-1}\vec{D}_{12}\right)\!\otimes\! \vec{\mathcal{I}}_N \right] \bullet \widehat{\vec{u}}^{[n]} +  \left(\vec{Z}^n\!\otimes\!\vec{\mathcal{I}}_N\right)\!\circ\!\vec{B}^n, \\
\label{6.7}
& \left( \tau^{-1} \vec{D}_{11}^{-1} - \vec{C}^n/2 \right) \vec{Z}^n = \tau^{-1} \vec{D}_{11}^{-1}\vec{D}_{12} \vec{z}^{[n]} +  \widetilde{\vec{C}}^n/2,
\end{align}
where
\begin{align*}
& \widehat{\vec{U}}^n :=\! \left(\widehat{\vec{U}}_{n,1},\widehat{\vec{U}}_{n,2},\ldots,\widehat{\vec{U}}_{n,s} \right)^T,~~
\widehat{\vec{u}}^{[n]} :=\! \left(\widehat{\vec{u}}_{1}^{[n]},\widehat{\vec{u}}_{2}^{[n]},\ldots,\widehat{\vec{u}}_{s}^{[n]} \right)^T,~~ \vec{Z}^n:=\! \left(Z_{n,1},Z_{n,2},\ldots,Z_{n,s} \right)^T,
\\
& \vec{z}^{[n]}:=\! \left(z_{1}^{[n]},z_{2}^{[n]},\ldots,z_{s}^{[n]} \right)^T, ~~\widehat{\vec{W}}_{n,i} :=\! \widehat{W}(\bar{\vec{U}}_{n,i}), ~~\widehat{\vec{W}}^n:=\! \left(\widehat{\vec{W}}_{n,1}, \widehat{\vec{W}}_{n,2}, \ldots, \widehat{\vec{W}}_{n,s}\right)^T,
\\
& \vec{C}^n:=\! \mbox{diag}\left[\left\langle\widehat{\vec{W}}_{n,1},\mathcal{G}_h\circ \widehat{\vec{W}}_{n,1}\right\rangle,  \left\langle\widehat{\vec{W}}_{n,2},\mathcal{G}_h \circ \widehat{\vec{W}}_{n,2}\right\rangle, \ldots, \left\langle\widehat{\vec{W}}_{n,s},\mathcal{G}_h\circ \widehat{\vec{W}}_{n,s}\right\rangle\right],
\\
& \vec{B}^n:=\! \left(\mathcal{G}_h\circ \widehat{\vec{W}}_{n,1}, \mathcal{G}_h \circ \widehat{\vec{W}}_{n,2},\ldots, \mathcal{G}_h \circ \widehat{\vec{W}}_{n,s}\right)^T,
\end{align*}
and $\otimes$ denotes the Kronecker product symbol,
$\vec{I}_s$ is the $s\times s$ identity matrix, $\vec{\mathcal{I}}_N$ is the $N\times N$ matrix with   all elements being one, while $\widetilde{\vec{C}}^n$ is an unknown vector given by
\[ \widetilde{\vec{C}}^n: = \left(\left\langle\widehat{\vec{W}}_{n,1},\mathcal{G}_h\!\circ\!\mathcal{L}_h\!\circ\!\widehat{\vec{U}}_{n,1} \right\rangle, \left\langle\widehat{\vec{W}}_{n,2},\mathcal{G}_h\!\circ\!\mathcal{L}_h\!\circ\! \widehat{\vec{U}}_{n,2} \right\rangle,\ldots, \left\langle\widehat{\vec{W}}_{n,s},\mathcal{G}_h\!\circ\!\mathcal{L}_h\!\circ\!\widehat{\vec{U}}_{n,s} \right\rangle \right)^T.
\]
The discrete operator $\mathcal{G}_h$ in Fourier space is non-positive, so is the diagonal matrix $\vec{C}^n$.
When the GLTDs  \eqref{2.1.2} are diagonally stable, the matrix  $  \tau^{-1} \vec{D}_{11}^{-1} - \frac{1}{2} \vec{C}^n$ is invertible, see Theorem 3.1 in \cite{Calvo99}.
Multiplying \eqref{6.7} with $\left( \tau^{-1} \vec{D}_{11}^{-1} - \frac{1}{2} \vec{C}^n \right)^{-1}$ and  substituting the derived equation into \eqref{6.6} yields
%the linear system for  $\vec{U}^n$
\begin{align}   \label{6.8}
\left[\tau^{-1} \vec{D}_{11}^{-1}\!\otimes\! \vec{\mathcal{I}}_N  - \left(\mathcal{G}_h\!\circ\!\mathcal{L}_h\right)\!\otimes\! \vec{I}_s\right] \bullet \widehat{\vec{U}}^n
=\;& \vec{R}^n  + \frac{1}{2}\left[ \left(\tau^{-1} \vec{D}_{11}^{-1} -  \vec{C}^n/2 \right)^{-1} \widetilde{\vec{C}}^n \!\otimes\! \vec{\mathcal{I}}_N\right] \!\circ\! \vec{B}^n,
\end{align}
where  $\tau^{-1} \vec{D}_{11}^{-1}\!\otimes\! \vec{\mathcal{I}}_N  - \left(\mathcal{G}_h\!\circ\!\mathcal{L}_h\right)\!\otimes\! \vec{I}_s$ is  invertible and
\[ \vec{R}^n := \tau^{-1}\left[ \left(\vec{D}_{11}^{-1}\vec{D}_{12}\right)\!\otimes\! \vec{\mathcal{I}}_N \right] \bullet \widehat{\vec{u}}^{[n]}
+ \left[ \left(\tau^{-1} \vec{D}_{11}^{-1} -  \vec{C}^n/2 \right)^{-1}\!\vec{D}_{11}^{-1}\vec{D}_{12} \vec{z}^{[n]} \!\otimes\! \vec{\mathcal{I}}_N\right] \!\circ\! \vec{B}^n . \]
%When the GLTDs  \eqref{2.1.2} are diagonally stable, .

To summarize, the SAV-GL schemes \eqref{6.1}-\eqref{6.2} are implemented as follows:
\begin{description}
\item[(1)] Compute $\widehat{\vec{U}}^{n}$  from \eqref{6.8};
\item[(2)] Compute $\vec{Z}^n$ from \eqref{6.7} or the second equation of \eqref{6.1};
\item[(3)] Compute $\widehat{\vec{u}}^{[n+1]}$ and $\vec{z}^{[n+1]}$  from \eqref{6.2}.
\end{description}

\begin{remark}
We solve the linear system \eqref{6.8} by using an incomplete iteration.
Instead of \eqref{6.8}, iteratively,
for each $k\geq 0$, one  solves the simplified linear system
\begin{align}  \label{6.9}
\left[\tau^{-1} \vec{D}_{11}^{-1}\!\otimes\! \vec{\mathcal{I}}_N  \!-\! \left(\mathcal{G}_h\!\circ\!\mathcal{L}_h\right)\!\otimes\! \vec{I}_s\right] \bullet \widehat{\vec{U}}^{n,(k+1)}
\!=\! \vec{R}^n  \!+ \! \frac{1}{2}\!\left[ \left(\tau^{-1} \vec{D}_{11}^{-1} \!-\!  \vec{C}^n/2 \right)^{-1} \widetilde{\vec{C}}^{n,(k)} \!\otimes \vec{\mathcal{I}}_N\right] \!\circ \vec{B}^n,
\end{align}
where
\[  \widetilde{\vec{C}}^{n,(k)}: = \left(\left\langle\widehat{\vec{W}}_{n,1},\mathcal{G}_h\!\circ\!\mathcal{L}_h\!\circ\! \widehat{\vec{U}}_{n,1}^{(k)} \right\rangle, \left\langle\widehat{\vec{W}}_{n,2},\mathcal{G}_h\!\circ\!\mathcal{L}_h\!\circ\! \widehat{\vec{U}}_{n,2}^{(k)} \right\rangle,\ldots, \left\langle\widehat{\vec{W}}_{n,s},\mathcal{G}_h\!\circ\!\mathcal{L}_h\!\circ\! \widehat{\vec{U}}_{n,s}^{(k)}\right\rangle \right)^T,    \]
and $\widehat{\vec{U}}^{n,(0)} := \big(\widehat{\bar{\vec{U}}}_{n,1},\widehat{\bar{\vec{U}}}_{n,2},\ldots, \widehat{\bar{\vec{U}}}_{n,s} \big)^T$.
If $\left\| \widehat{\vec{U}}^{n,(k+1)} - \widehat{\vec{U}}^{n,(k)} \right\| = \sum_{i=1}^s \left\| \widehat{\vec{U}}_{n,i}^{(k+1)} - \widehat{\vec{U}}_{n,i}^{(k)}\right\| \le 10^{-12}$, then stop the above iteration and do $\widehat{\vec{U}}^n := \widehat{\vec{U}}^{n,(k+1)}$.
\end{remark}

\begin{remark}
%Thanks to the fast Fourier transform algorithms, the Fourier spectral method plays an important role for solving equations with the periodic boundary condition.
In our computations, the codes are written in MATLAB  and call  both  {\tt fft} and  {\tt ifft} functions directly for the discrete Fourier and  inverse  Fourier transforms, so that they are simple and efficient.
\end{remark}

\section{Numerical experiments}   \label{sec:6}
%In all examples, we assume periodic boundary conditions and use a Fourier spectral method for space variables.

This section applies respectively the SAV-GL schemes \eqref{3.2.1}, \eqref{3.2.9} and \eqref{3.2.13}-\eqref{3.2.15} combined with the Fourier spectral method to three typical gradient flow models
(the Allen-Cahn, Cahn-Hilliard, and phase field crystal) with
the periodic boundary conditions in order to demonstrate their energy
stability and accuracy.
Specially,  \eqref{3.2.1} with $\theta = \frac{3}{4}$,   \eqref{3.2.9} with $\gamma = \delta = 1$, and  \eqref{3.2.9} with $\gamma = \delta = 2$ are chosen and corresponding fully-discrete SAV-GL schemes
are abbreviated as {\tt SAV-GL(1)}, {\tt SAV-GL(2)} and {\tt SAV-GL(3)}, respectively, for convenience.
For \eqref{3.2.13}-\eqref{3.2.15}, the coefficients are chosen as one-stage members of the two-step Runge-Kutta time discretizations  {\cite[pp. 1497, Example~1]{Li99}}  and two- and three-stage members of the Radau IIA time discretizations  {\cite[Section~IV, pp.~74]{Hairer}}, and corresponding fully-discrete SAV-GL schemes
are named as {\tt SAV-GL(4)}, {\tt SAV-GL(5)} and {\tt SAV-GL(6)}, respectively, for simplicity.
%In the following,  Method  I-VI combined with Fourier
%spectral method for space discretization are used to solve three gradient flows and these six fully discrete schemes are still called as Methods I-VI.
Unless otherwise specified, the domain $\Omega=[0,2\pi]\times [0,2\pi]$, the spatial stepsize $h=\frac{2\pi}{256}$, the discrete free energy is defined by
\[ \Upsilon(\vec{u}^{[n]},\vec{r}^{[n]}) =  \frac{1}{2} \sum_{i,j}^r g_{ij}\left\langle \mathcal{L}_h \circ \widehat{\vec{u}}_i^{[n]}, \widehat{\vec{u}}_j^{[n]}\right\rangle + \left\| \vec{z}^{[n]}\right\|_{\vec{G}}^2 - C_0,  \]
where $\vec{u}^{[n]} = \left( \vec{u}_1^{[n]}, \vec{u}_2^{[n]},\cdots, \vec{u}_r^{[n]}\right)$ with $\vec{u}_i^{[n]} \in \mathcal{V}_h$, $\vec{z}^{[n]} = \left( z_1^{[n]}, z_2^{[n]},\cdots, z_r^{[n]}\right)$ with $z_i^{[n]} \in \mathbb{R}$, and the matrix $\vec{G} = (g_{ij}) \in\mathbb{R}^{r\times r}$ is only dependent on the {GLTDs}.

\subsection{Allen-Cahn model}

The Allen-Cahn model is a second-order nonlinear partial differential equation (PDE)
\begin{align}   \label{5.2}
\frac{\partial u}{\partial t} = \epsilon^2 \Delta u +  (u - u^3),
\end{align}
 introduced to describe the motion of anti-phase boundaries in crystalline solids   \cite{Allen79}  and then widely used to study the phase transition  and the interfacial dynamics in material sciences, see e.g. \cite{Chen02,ShenJ10a,Golubovic11}.
It can be derived from the $L^2$ gradient flow of
the free energy
\begin{align}   \label{5.1}
\mathcal{F}(u) = \int_{\Omega} \frac{\epsilon^2}{2} |\nabla u|^2 + \frac{1}{4}(u^2-1)^2 dx.
\end{align}

In the following, we implement {\tt SAV-GL(1)}$\sim${\tt SAV-GL(6)} for \eqref{5.2} %to validate their energy stability and accuracy.  For our SAV-GL schemes, we
and choose
the operators $\mathcal{L}$, $\mathcal{G}$ and the energy $\mathcal{F}_1$ as follows
\[  \mathcal{L} = - \epsilon^2 \Delta + \beta ,~~~ \mathcal{G} = -1, ~~~ \mathcal{F}_1(u) = \int_{\Omega} \frac{1}{4}\left(u^2 - 1 \right)^2 - \frac{\beta}{2}u^2 dx,\]
where $\beta$ is a non-negative parameter, e.g. $\beta = 2$, and $\mathcal{F}_1(u)$ is bounded from below.

\begin{example} \label{Exp5.1.1}
This example is used to check the   accuracy of {\tt SAV-GL(1)}$\sim${\tt SAV-GL(6)}
% Methods I-VI
 for the Allen-Cahn equation \eqref{5.2}.
 For this purpose,   the parameter $\epsilon$ is taken as $0.1$,
 the initial data are chosen as $u(x,y,0) = \sin(x)\sin(y)$, and
 {\tt SAV-GL(6)} with $\tau = 10^{-4}$ is used to get the reference solution for computing the $L^2$ errors.
Table \ref{tab:5.1} presents the $L^2$ errors of {\tt SAV-GL(1)}$\sim${\tt SAV-GL(6)} at $t=1.5$ and corresponding convergence rates  with different time stepsizes.
It can be found that the numerical accuracies of {\tt SAV-GL(1)}$\sim${\tt SAV-GL(4)} are consistent with the theoretical,
and {\tt SAV-GL(5)} and {\tt SAV-GL(6)} can arrive at the third-order and fourth-order accuracy for the Allen-Cahn equation \eqref{5.2}, since the number of extrapolation points is $\nu = 3$ and $4$, respectively. Those results well verify  the statements in Remark \ref{remark4.6}.
\end{example}

\begin{table}[!htbp]
\begin{center}
\caption{Example \ref{Exp5.1.1}. $L^2$ errors of {\tt SAV-GL(1)}$\sim${\tt SAV-GL(6)} at $t = 1.5$ and corresponding convergence rates.}
\begin{tabular*}{\textwidth}{@{\extracolsep{\fill}}lccccccccccc}
\toprule[1pt]
{}&\multicolumn{2}{c}{{\tt SAV-GL(1)}}&{} & \multicolumn{2}{c}{{\tt SAV-GL(2)}} & {} & \multicolumn{2}{c}{{\tt SAV-GL(3)}} & \\
\cmidrule[0.5pt]{2-3}\cmidrule[0.5pt]{5-6} \cmidrule[0.5pt]{8-9}
{$K$} & Errors & Orders &  {} & Errors & Orders  &  {} & Errors & Orders   \\
\midrule[1pt]
$80$        &4.7399e-02     &--          &{}      &1.2682e-03     &--       &{}      &2.0128e-03     &--       \\
$120$       &3.1691e-02     &0.9928      &{}      &5.6554e-04     &1.9918   &{}      &8.9734e-04     &1.9924     \\
$160$       &2.3803e-02     &0.9950      &{}      &3.1866e-04     &1.9941   &{}      &5.0554e-04     &1.9946     \\
$200$       &1.9058e-02     &0.9962      &{}      &2.0415e-04     &1.9954   &{}      &3.2385e-04     &1.9958     \\
$240$       &1.5891e-02     &0.9969      &{}      &1.4187e-04     &1.9962   &{}      &2.2504e-04     &1.9965     \\
\midrule[1pt]
{}&\multicolumn{2}{c}{{\tt SAV-GL(4)}}&{} & \multicolumn{2}{c}{{\tt SAV-GL(5)}} & {} & \multicolumn{2}{c}{{\tt SAV-GL(6)}} & \\
\cmidrule[0.5pt]{2-3}\cmidrule[0.5pt]{5-6} \cmidrule[0.5pt]{8-9}
{$K$} & Errors & Orders &  {} & Errors & Orders  &  {} & Errors & Orders   \\
\midrule[1pt]
$80$        &1.0273e-03     &--          &{}      &1.2299e-06     &--       &{}      &1.2539e-08     &--       \\
$120$       &4.5990e-04     &1.9822      &{}      &3.4481e-07     &3.1365   &{}      &2.2253e-09     &4.2641     \\
$160$       &2.5961e-04     &1.9877      &{}      &1.4145e-07     &3.0975   &{}      &6.5717e-10     &4.2398    \\
$200$       &1.6650e-04     &1.9906      &{}      &7.1206e-08     &3.0759   &{}      &2.5853e-10    &4.1809     \\
$240$       &1.1579e-04     &1.9923      &{}      &4.0742e-08     &3.0622   &{}      &1.2207e-10     &4.1160     \\
\bottomrule[1pt]
\end{tabular*}        \label{tab:5.1}
\end{center}
\end{table}

\begin{example} \label{Exp5.1.2}
This example uses  {\tt SAV-GL(1)}$\sim${\tt SAV-GL(6)} to simulate the  phase
separation and coarsening process.
The parameter $\epsilon$  in \eqref{5.2} is chosen as $0.05$,
the time stepsize $\tau$ is taken as $0.1$ or $ 0.01$,
and the initial data are  $u(x,y,0) = 0.1\times \mbox{\tt rand}(x,y) - 0.05$, where $\mbox{\tt rand}(x,y)$ generates random number between $-1$ and $1$.

Figure \ref{fig5.1.1} gives the cut lines and contour lines of the numerical solution at $t=200$ derived by {\tt SAV-GL(1)}, {\tt SAV-GL(3)}, {\tt SAV-GL(4)} and {\tt SAV-GL(5)} with $\tau=0.1$.
We see that the numerical solutions obtained by those schemes are  similar or have a little difference due to the low-accuracy of {\tt SAV-GL(1)}.
% To simplify the presentation, part of the numerical results are shown.
%As an example,
Figure \ref{fig5.1.2}  presents the snapshots of the numerical solutions at $t = 0, 2, 10, 50, 100$ and  $200$ obtained by {\tt SAV-GL(5)} with $\tau = 0.1$. One can clearly observe the phase separation and coarsening process.
In order to check numerically the discrete maximum principle of those schemes, Figure \ref{fig5.1.3} shows the maximal and minimal values of the numerical solutions obtained by {\tt SAV-GL(1)}, {\tt SAV-GL(3)}, {\tt SAV-GL(4)} and {\tt SAV-GL(5)} with $\tau = 0.1$.
Figure \ref{fig5.1.4} displays
the discrete energy curves of {\tt SAV-GL(1)}, {\tt SAV-GL(3)}, {\tt SAV-GL(4)} and {\tt SAV-GL(5)} with $\tau = 0.1$ and $0.01$.
It is shown that the discrete energy curves are monotonically decreasing so that those schemes are energy stable in solving the Allen-Cahn model \eqref{5.2}; there are  obvious differences
between those discrete energy curves with $\tau = 0.1$ but the differences are indistinguishable  for $\tau=0.01$;
and the third-order accurate {\tt SAV-GL(5)} can reach steady state faster than  SAV-GL(1), SAV-GL(3) and {\tt SAV-GL(4)}.
% so that
%the high-order SAV schemes may have some advantages for  \eqref{5.2}.

\end{example}

\begin{figure}
\centering
{\includegraphics[width=7.5cm]{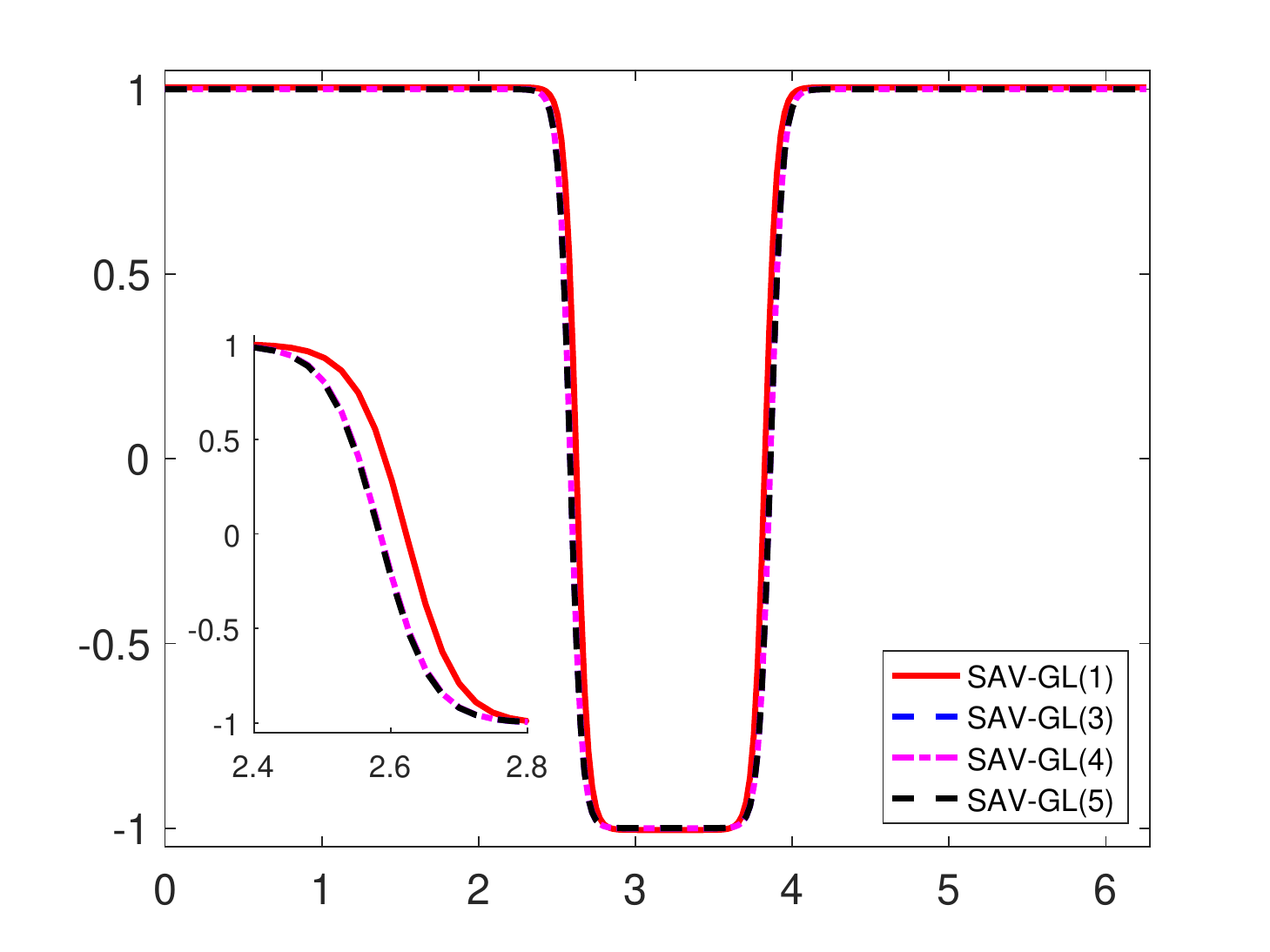}}
{\includegraphics[width=7.5cm]{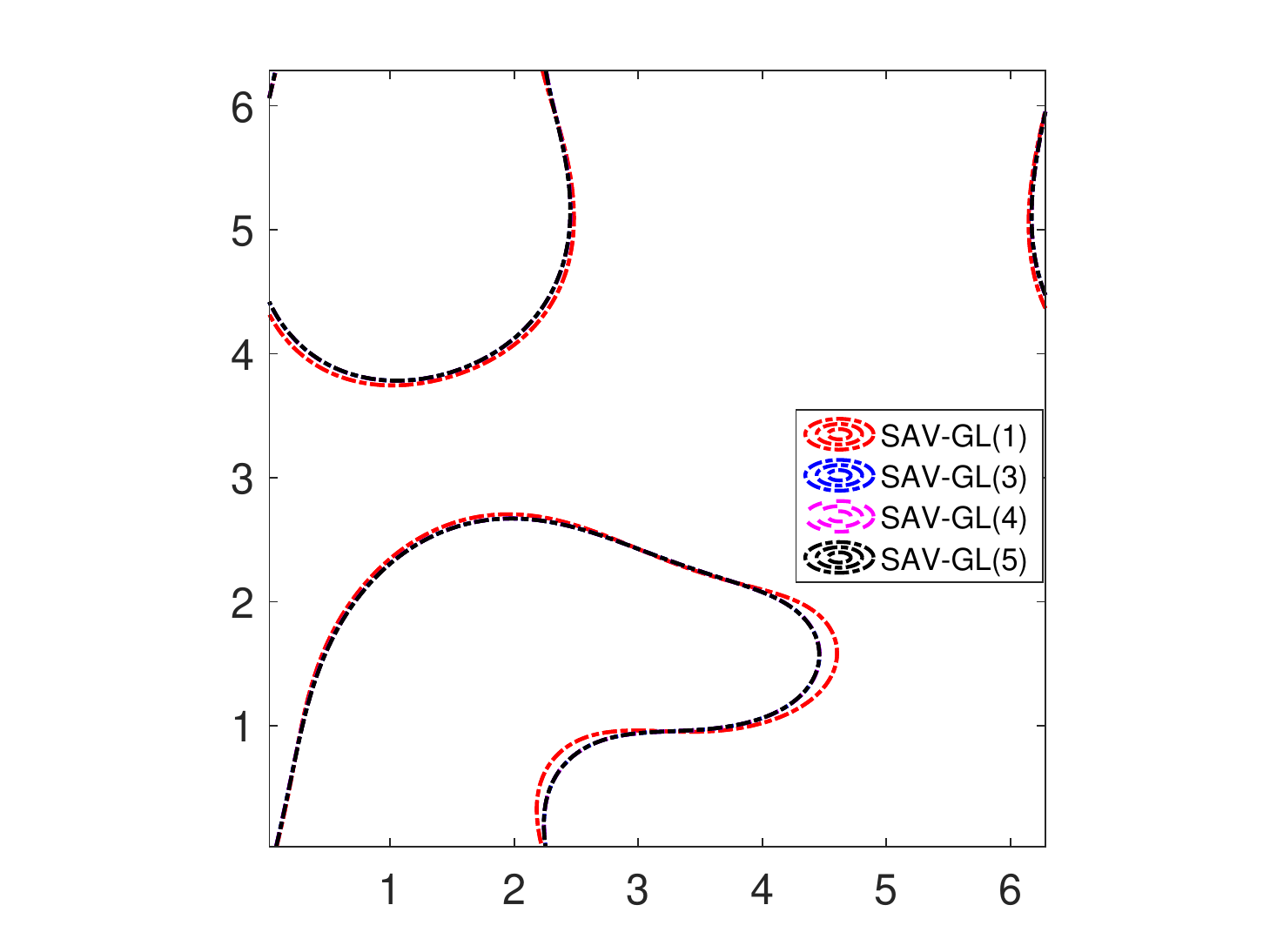}}
\caption{Example \ref{Exp5.1.2}.
Cut lines of $u(x, y,t)$ along $y=\frac{\pi}{2}$ (Left)
and  %(x\in[0,2\pi])$;
contour lines of $u(x,y,t)=-0.1$ (Right) at $t=200$.}
\label{fig5.1.1}
\end{figure}

\begin{figure}
\includegraphics[width=16cm,height=9cm]{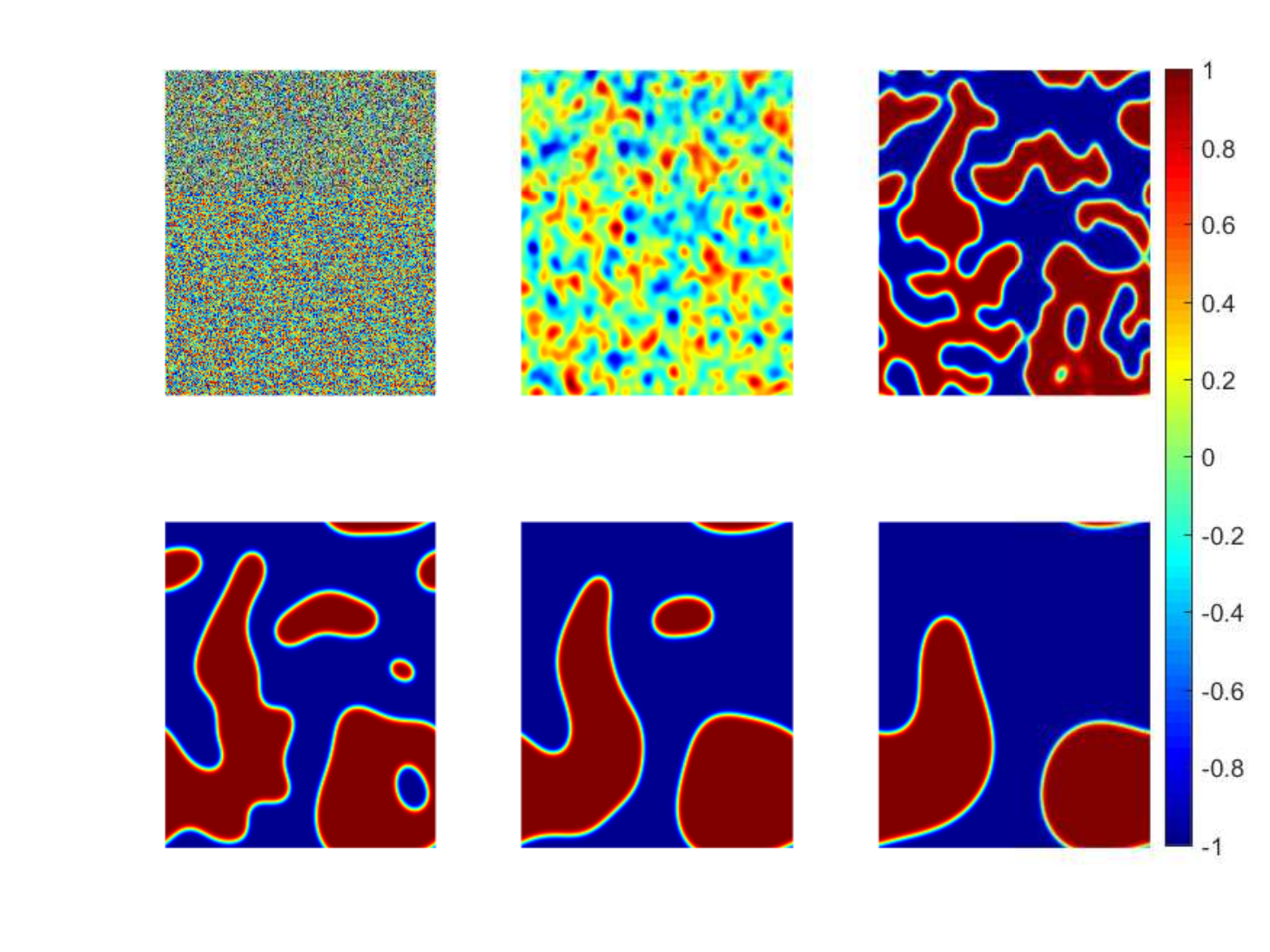}
\caption{Example \ref{Exp5.1.2}.  Snapshots of the numerical solutions at $t = 0, 2, 10, 50, 100$ and  $200$ obtained by using {\tt SAV-GL(5)} with $\tau = 0.1$.}
\label{fig5.1.2}
\end{figure}

\begin{figure}
\centering
{\includegraphics[width=7.5cm]{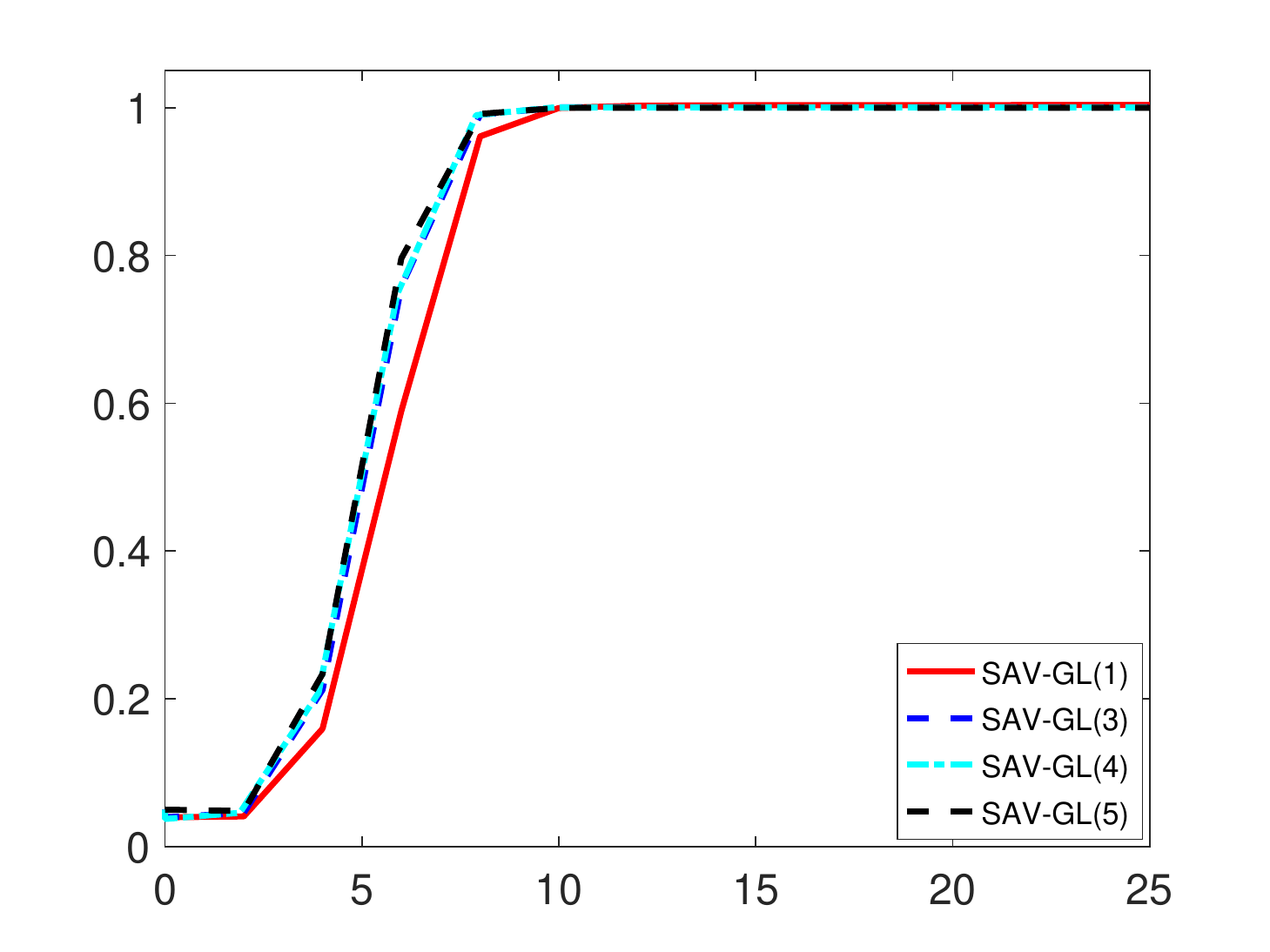}}
{\includegraphics[width=7.5cm]{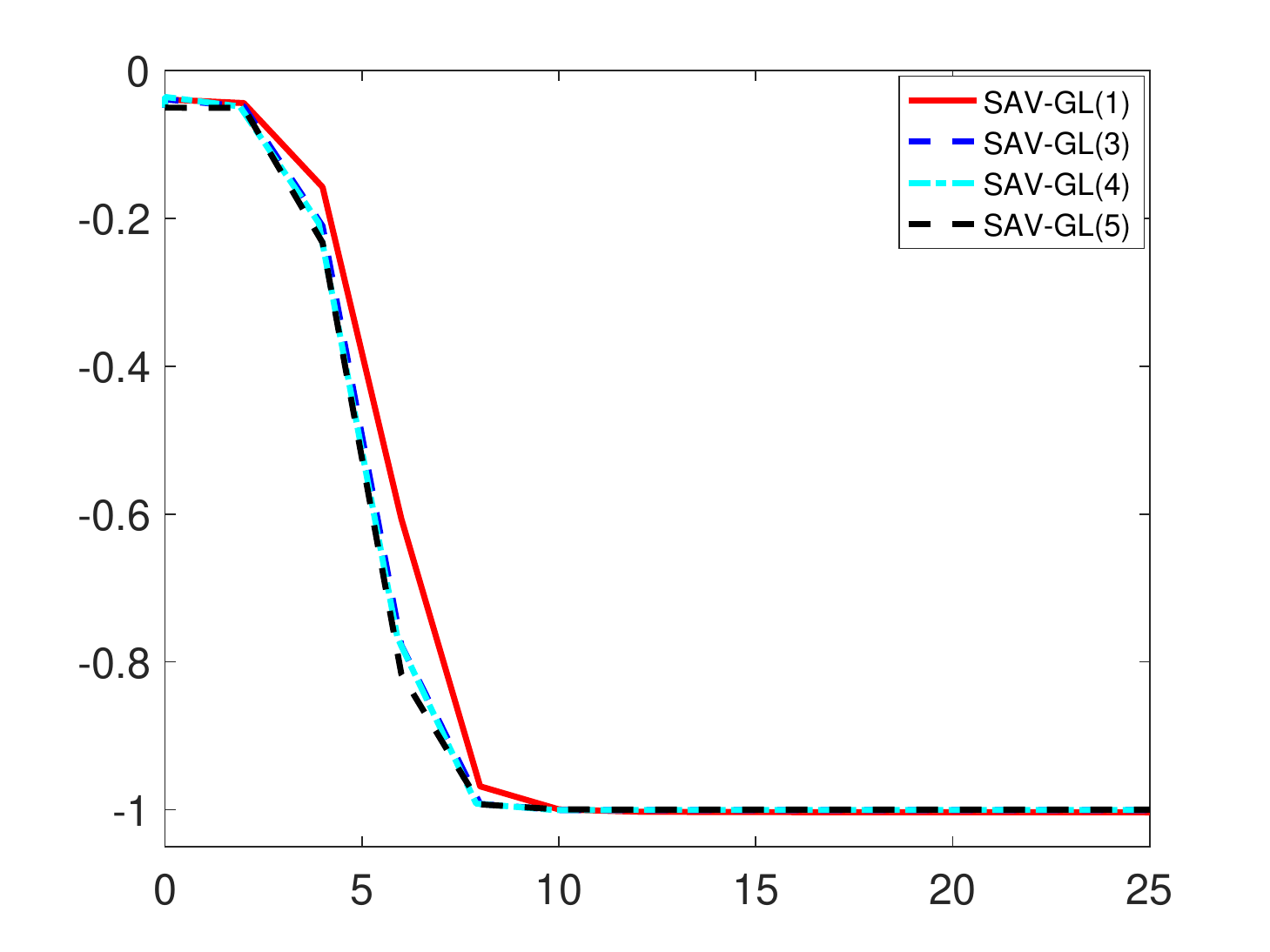}}
\caption{Example \ref{Exp5.1.2}.
Maximal and minimal values (from left to right) of the numerical solutions derived by {\tt SAV-GL(1)}, {\tt SAV-GL(3)}, {\tt SAV-GL(4)} and {\tt SAV-GL(5)} with $\tau = 0.1$.}
\label{fig5.1.3}
\end{figure}

\begin{figure}
{\includegraphics[width=7.5cm]{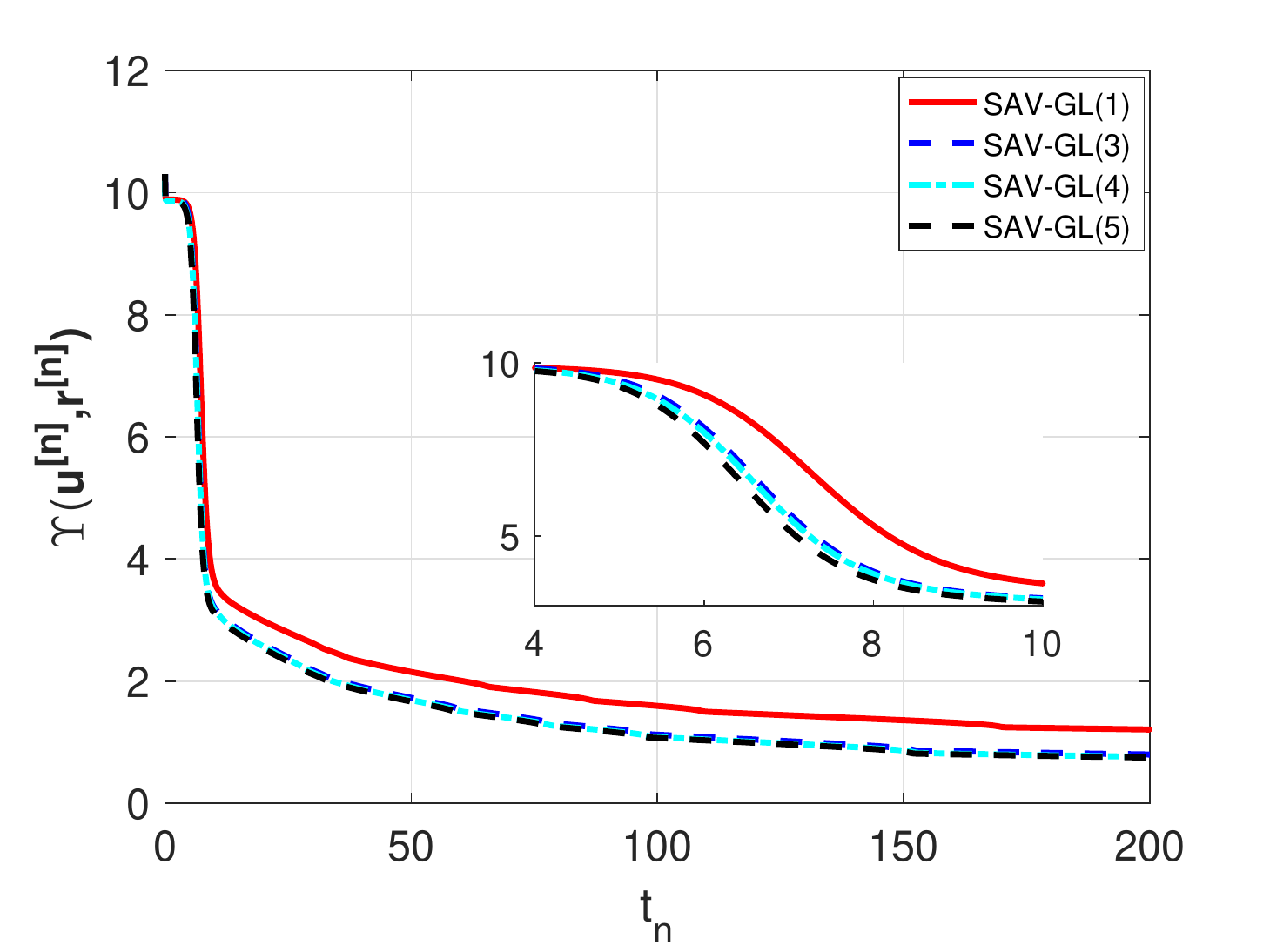}}
{\includegraphics[width=7.5cm]{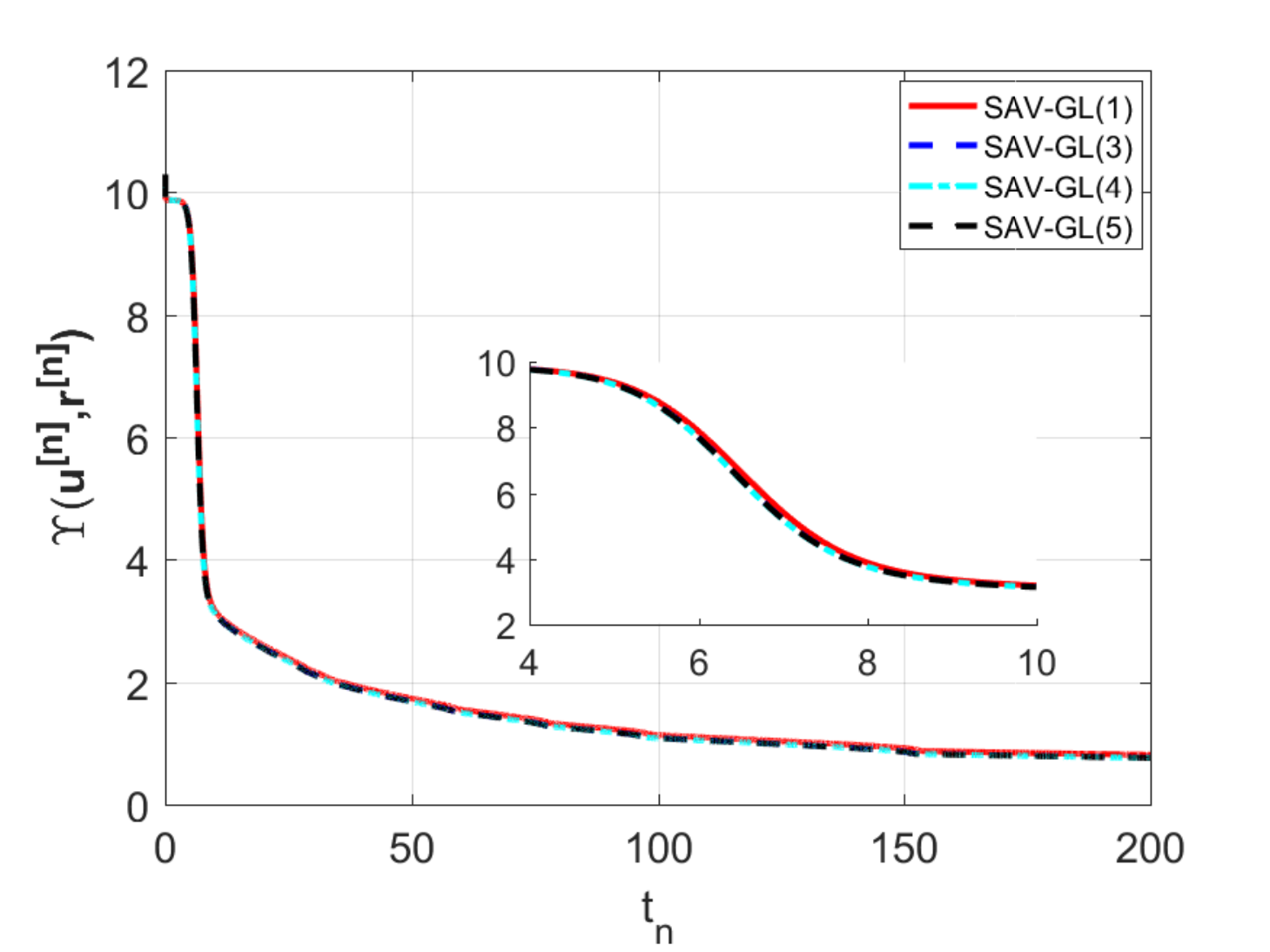}}
\caption{Example \ref{Exp5.1.2}.
Discrete energies of {\tt SAV-GL(1)}, {\tt SAV-GL(3)}, {\tt SAV-GL(4)} and {\tt SAV-GL(5)} with $\tau = 0.1$ (Left) and $0.01$ (Right).}
\label{fig5.1.4}
\end{figure}

\subsection{Cahn-Hilliard model}

The Cahn-Hilliard (CH) model was introduced by Cahn and Hilliard
in \cite{Cahn58} to describe the complicated phase separation and coarsening phenomena. Different from the Allen-Cahn \eqref{5.2},  the CH model is   derived from the $H^{-1}$ gradient flow of the free energy \eqref{5.1} and  is a fourth-order nonlinear PDE as follows
\begin{align}   \label{5.3}
\frac{\partial u}{\partial t} = \Delta \left( -\epsilon^2\Delta u + u^3 - u \right),
\end{align}

In order to validate the energy
stability and accuracy of {\tt SAV-GL(1)}$\sim${\tt SAV-GL(6)} for the CH model \eqref{5.3}, the operators $\mathcal{L}$, $\mathcal{G}$ and  the energy $\mathcal{F}_1(u)$ are taken as
\[  \mathcal{L} = - \epsilon^2 \Delta + \beta ,~~~ \mathcal{G} = \Delta, ~~~ \mathcal{F}_1(u) = \int_{\Omega} \frac{1}{4}\left(u^2 - 1 \right)^2 - \frac{\beta}{2} u^2dx, \]
where the parameter $\beta$ is chosen as $2$ in subsequent simulations, and it is obvious that the energy $\mathcal{F}_1(u)$ is bounded from below.

\begin{example} \label{Exp5.2.1}
This example is  used to test the accuracy of  {\tt SAV-GL(1)}$\sim${\tt SAV-GL(6)} for  the CH model \eqref{5.3}. The parameter $\epsilon$ is chosen as $ 1$, the initial data are  $u(x,y,0) = 0.4\sin(x)\sin(y)$,  and {\tt SAV-GL(6)} with $\tau = 10^{-4}$ is used to generate the reference solution for computing the $L^2$ errors.
Table \ref{tab:5.2} lists the $L^2$ errors of {\tt SAV-GL(1)}$\sim${\tt SAV-GL(6)} at $t=0.3$ and corresponding convergence rates with different time stepsizes.
One can find that the numerical accuracies of {\tt SAV-GL(1)}$\sim${\tt SAV-GL(4)} are consistent with the theoretical, while  {\tt SAV-GL(5)} (resp. {\tt SAV-GL(6)}) with the number of extrapolation points $\nu = 3$ (resp. $\nu=4$) can arrive at the third-order (resp. fourth-order) accuracy, which validates the statement in Remark \ref{remark4.6}.
\end{example}

\begin{table}[!htbp]
\begin{center}
\caption{Example \ref{Exp5.2.1}. $L^2$ errors of {\tt SAV-GL(1)}$\sim${\tt SAV-GL(6)} at {$t = 0.3$} and corresponding convergence rates.}
\begin{tabular*}{\textwidth}{@{\extracolsep{\fill}}lccccccccccc}
\toprule[1pt]
{}&\multicolumn{2}{c}{{\tt SAV-GL(1)}}&{} & \multicolumn{2}{c}{{\tt SAV-GL(2)}} & {} & \multicolumn{2}{c}{{\tt SAV-GL(3)}} & \\
\cmidrule[0.5pt]{2-3}\cmidrule[0.5pt]{5-6} \cmidrule[0.5pt]{8-9}
{$K$} & Errors & Orders &  {} & Errors & Orders  &  {} & Errors & Orders   \\
\midrule[1pt]
$120$        &3.6925e-03     &--          &{}      &3.1250e-05    &--       &{}      &4.1560e-05     &--       \\
$160$       &2.7695e-03     &0.9998     &{}      &1.7598e-05     &1.9961   &{}      &2.3339e-05     &2.0057     \\
$200$       &2.2157e-03     &0.9998      &{}      &1.1270e-05     &1.9969   &{}      &1.4923e-05     &2.0042     \\
$240$       &1.8465e-03     &0.9999      &{}      &7.8304e-06     &1.9974   &{}      &1.0357e-05     &2.0034     \\
$280$       &1.5827e-03     &0.9999      &{}      &5.7548e-06     &1.9978   &{}      &7.6059e-06     &2.0028     \\
\midrule[1pt]
{}&\multicolumn{2}{c}{{\tt SAV-GL(4)}}&{} & \multicolumn{2}{c}{{\tt SAV-GL(5)}} & {} & \multicolumn{2}{c}{{\tt SAV-GL(6)}} & \\
\cmidrule[0.5pt]{2-3}\cmidrule[0.5pt]{5-6} \cmidrule[0.5pt]{8-9}
{$K$} & Errors & Orders &  {} & Errors & Orders  &  {} & Errors & Orders   \\
\midrule[1pt]
$120$        &2.2516e-05     &--          &{}      &1.8203e-09     &--       &{}      &2.4250e-09     &--       \\
$160$       &1.2680e-05     &1.9959      &{}      &7.7214e-10     &2.9811   &{}      &7.2844e-10     &4.1806     \\
$200$       &8.1207e-06     &1.9969      &{}      &3.9652e-10     &2.9866   &{}      &2.8832e-10     &4.1535    \\
$240$       &5.6420e-06     &1.9975      &{}      &2.2981e-10     &2.9919   &{}      &1.3615e-10    &4.1153     \\
$280$       &4.1465e-06     &1.9979      &{}      &1.4486e-10     &2.9935   &{}      &7.2146e-11     &4.1198     \\
\bottomrule[1pt]
\end{tabular*}        \label{tab:5.2}
\end{center}
\end{table}

\begin{example} \label{Exp5.2.2}
This example applies {\tt SAV-GL(1)}$\sim${\tt SAV-GL(6)} to a benchmark problem of studying the coarsening effect. The parameter
$\epsilon$ in \eqref{5.3} is taken as  $0.1$,  the time stepsize $\tau$ is chosen as $0.1$ or $0.01$, and the initial data are specified by the following expression  \cite{YangX20}
\begin{align*}
u_0(x,y,0) = \sum_{i=1}^2 -\tanh\left(\frac{\sqrt{(x-x_i)^2 + (y-y_i)^2 - \nu_i}}{1.2\epsilon}\right) + 1,
\end{align*}
where $(x_1,y_1,\nu_1) = (\pi-0.7,\pi-0.6,1.5)$ and $(x_2,y_2,\nu_2) = (\pi+1.65,\pi+1.6,0.7)$.

Figure \ref{fig5.2.2}  gives the snapshos of the numerical solutions at $t = 0, 2, 5, 7, 9$ and  $20$ obtained by {\tt SAV-GL(5)} with $\tau = 0.01$. One can clearly observe the coarsening effect that the small circle is absorbed into the big circle, and the total absorption happens at around $t = 10$.
%In order to give a further insight to those schemes,
Figure \ref{fig5.2.1} presents the cut lines and contour lines of the numerical solutions at $t=5$ and $20$ derived by {\tt SAV-GL(1)}, {\tt SAV-GL(3)}, {\tt SAV-GL(4)} and {\tt SAV-GL(5)} with $\tau=0.1$. It can be seen that those solutions at $t=5$ have some visible differences, see Figure \ref{fig5.2.1} (a), but  they  become quite similar when  $t=20$, see Figure \ref{fig5.2.1} (b).
{Compared to the initial total mass, the total mass differences} at $t_n$  obtained by {\tt SAV-GL(1)}, {\tt SAV-GL(3)}, {\tt SAV-GL(4)} and {\tt SAV-GL(5)} with $\tau = 0.1$ are given in Figure \ref{fig5.2.3}, which checks the mass conservation numerically.
Figure \ref{fig5.2.4} shows the discrete energy curves of {\tt SAV-GL(1)}, {\tt SAV-GL(3)}, {\tt SAV-GL(4)} and {\tt SAV-GL(5)} with $\tau = 0.1$ and $0.01$. One can see that the discrete energy curves are monotonically decreasing so that those schemes are energy stable in solving the CH model \eqref{5.3}; the discrete energy curves of those schemes have big differences when $\tau = 0.1$, but the differences become small for $\tau=0.01$; and similarly, SAV-GL(5)  can obtain the steady state faster than t SAV-GL(1), SAV-GL(3) and {\tt SAV-GL(4)}. The results shown in Figure \ref{fig5.2.4} are also consistent with the differences between the numerical solutions at $t=5$ in Figure \ref{fig5.2.1}.
\end{example}

\begin{figure}
\centering
{\includegraphics[width=7.5cm]{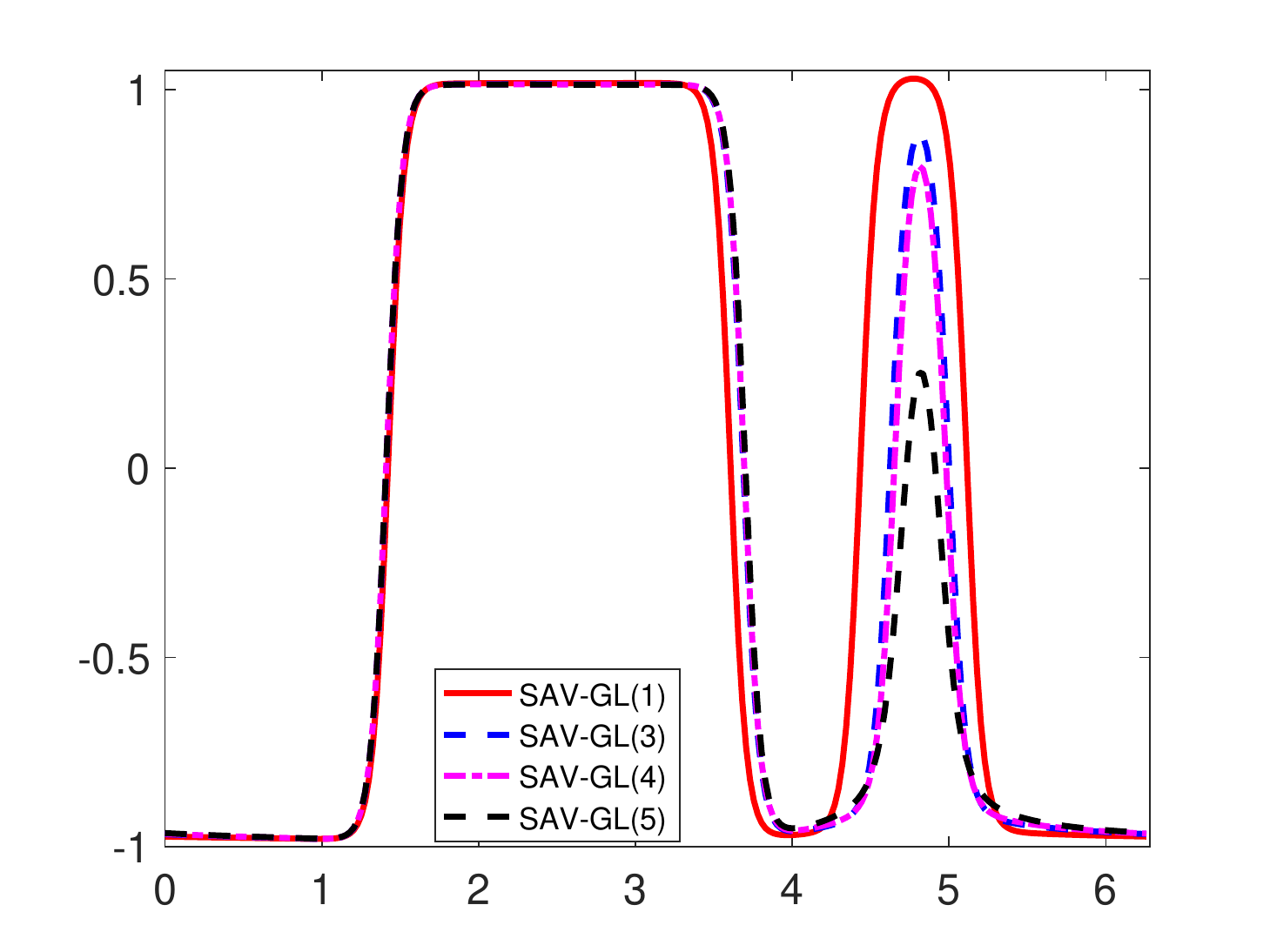}}
{\includegraphics[width=7.5cm]{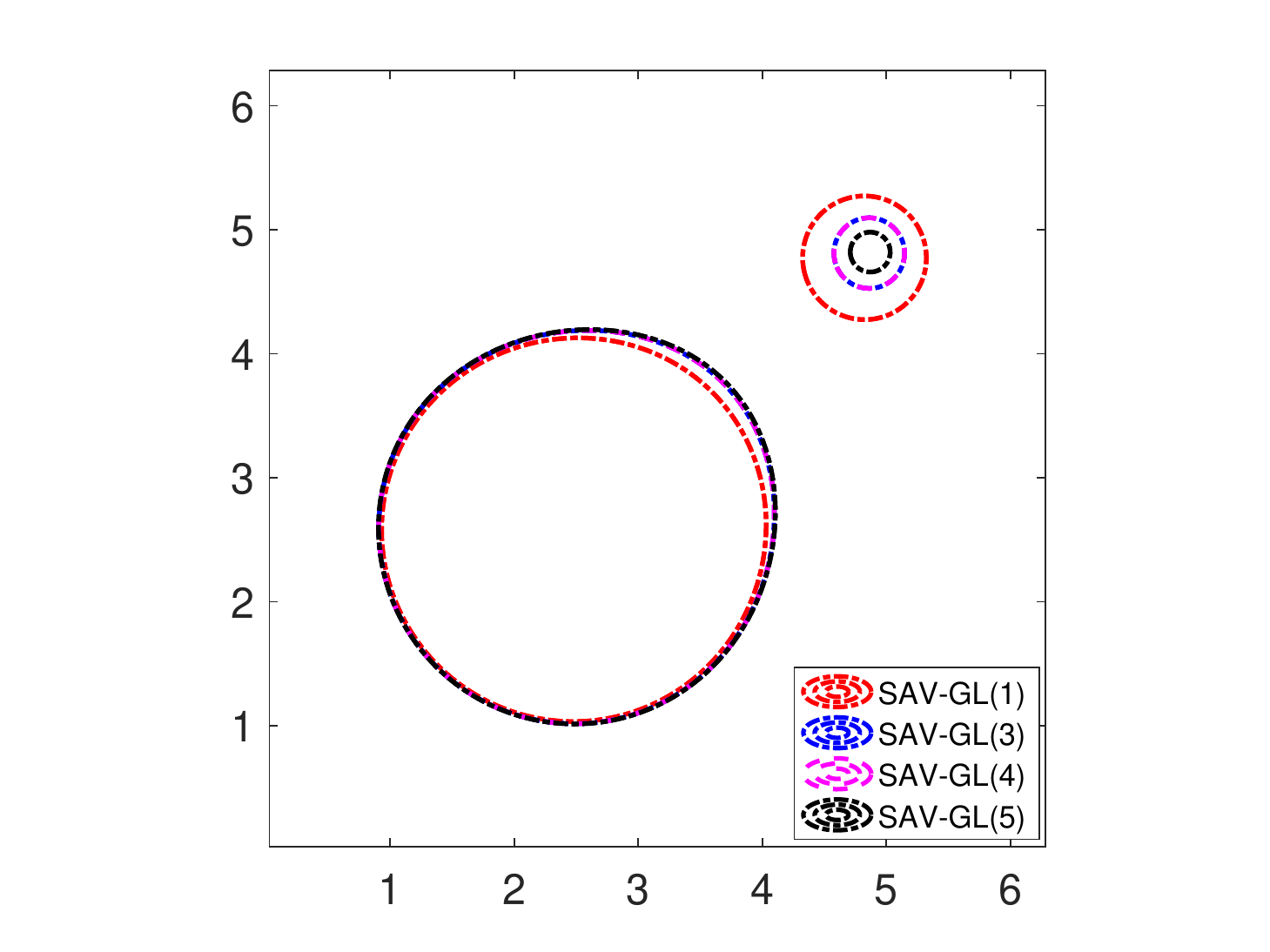}}
  \centerline{ (a) $t=5$}

{\includegraphics[width=7.5cm]{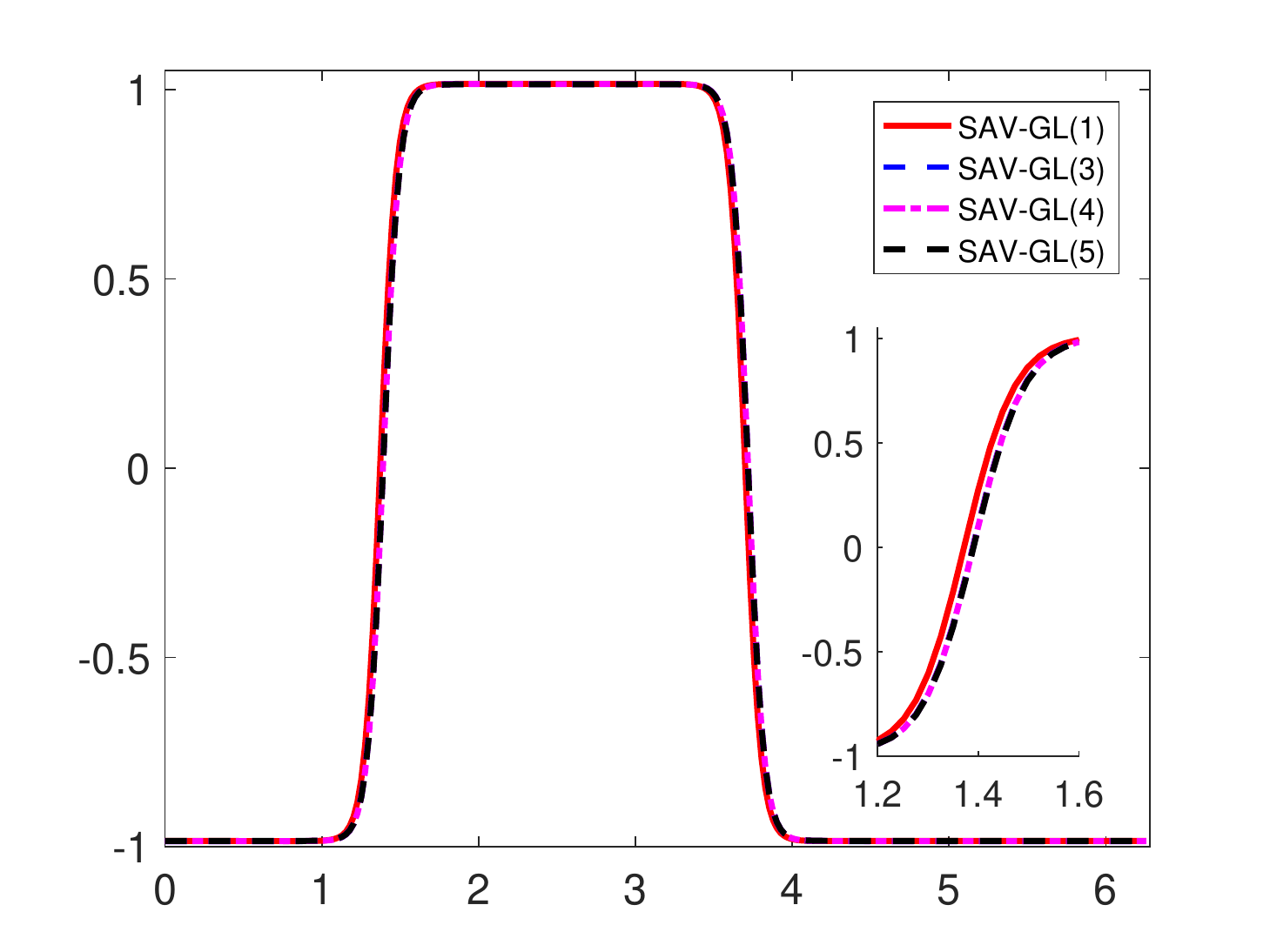}}
{\includegraphics[width=7.5cm]{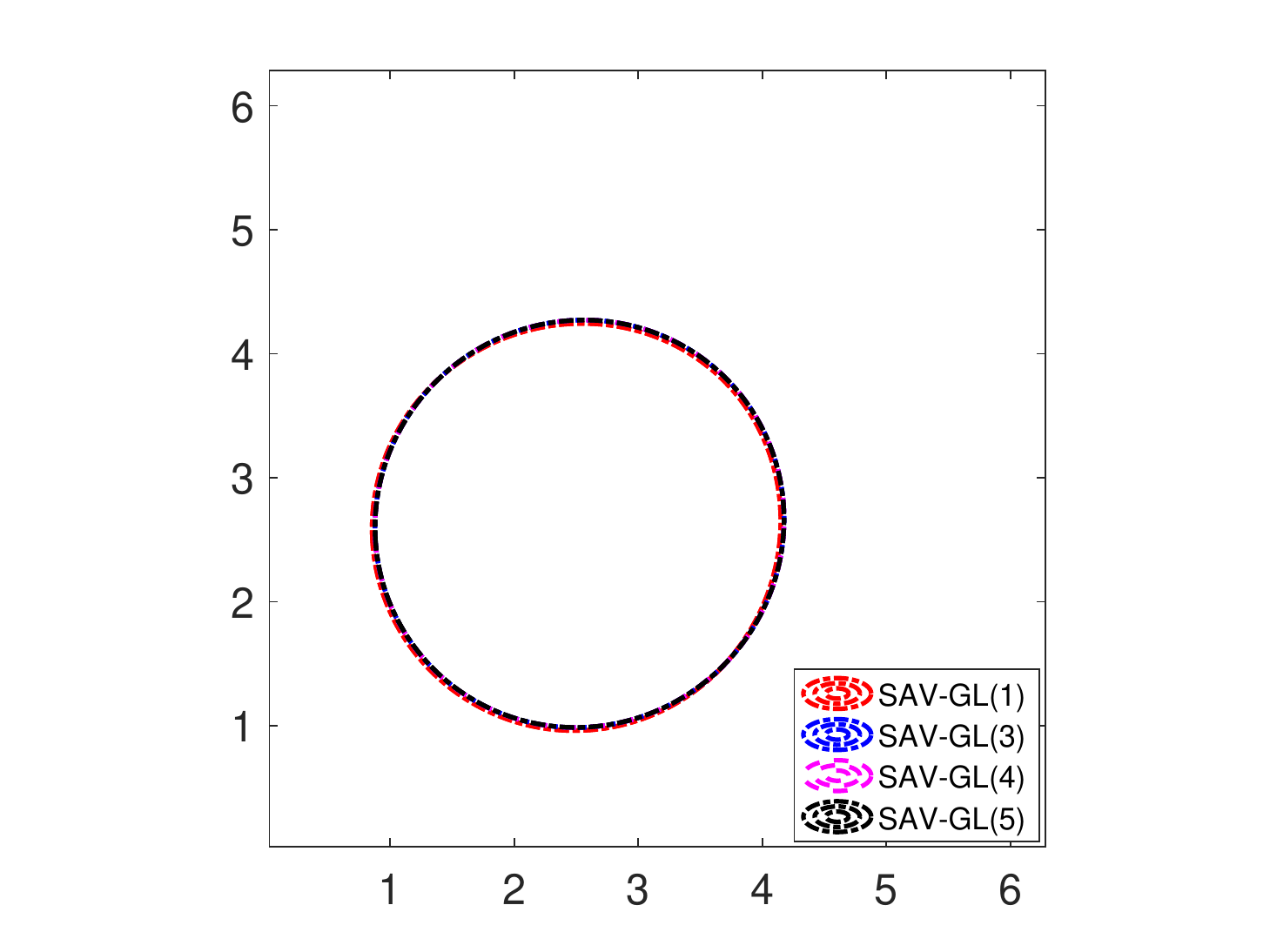}}
  \centerline{ (b) $t=20$}
\caption{Example \ref{Exp5.2.2}.
Left: cut lines of the numerical solutions along $y=x$; %(x\in[0,2\pi])$;
right: contour lines of  $u(x,y,t)=-0.1$.}
\label{fig5.2.1}
\end{figure}

\begin{figure}
\includegraphics[width=16cm,height=9cm]{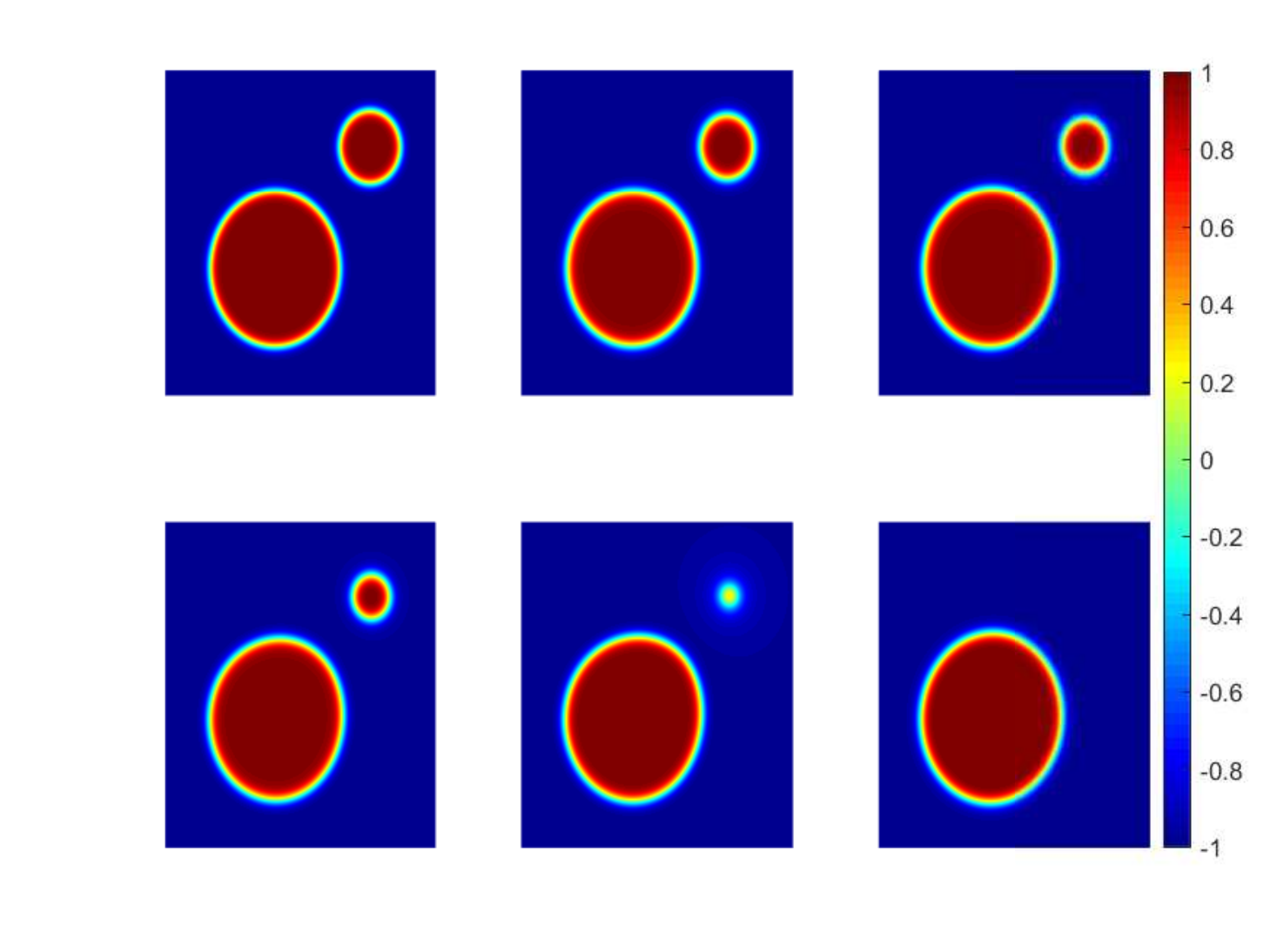}
\caption{Example \ref{Exp5.2.2}.   Snapshots of the numerical solutions at $t = 0, 2, 5, 7, 9$ and  $20$ derived by using {\tt SAV-GL(5)} with $\tau = 0.1$.}
\label{fig5.2.2}
\end{figure}

\begin{figure}
\centerline{\includegraphics[width=12cm,height=7.5cm]{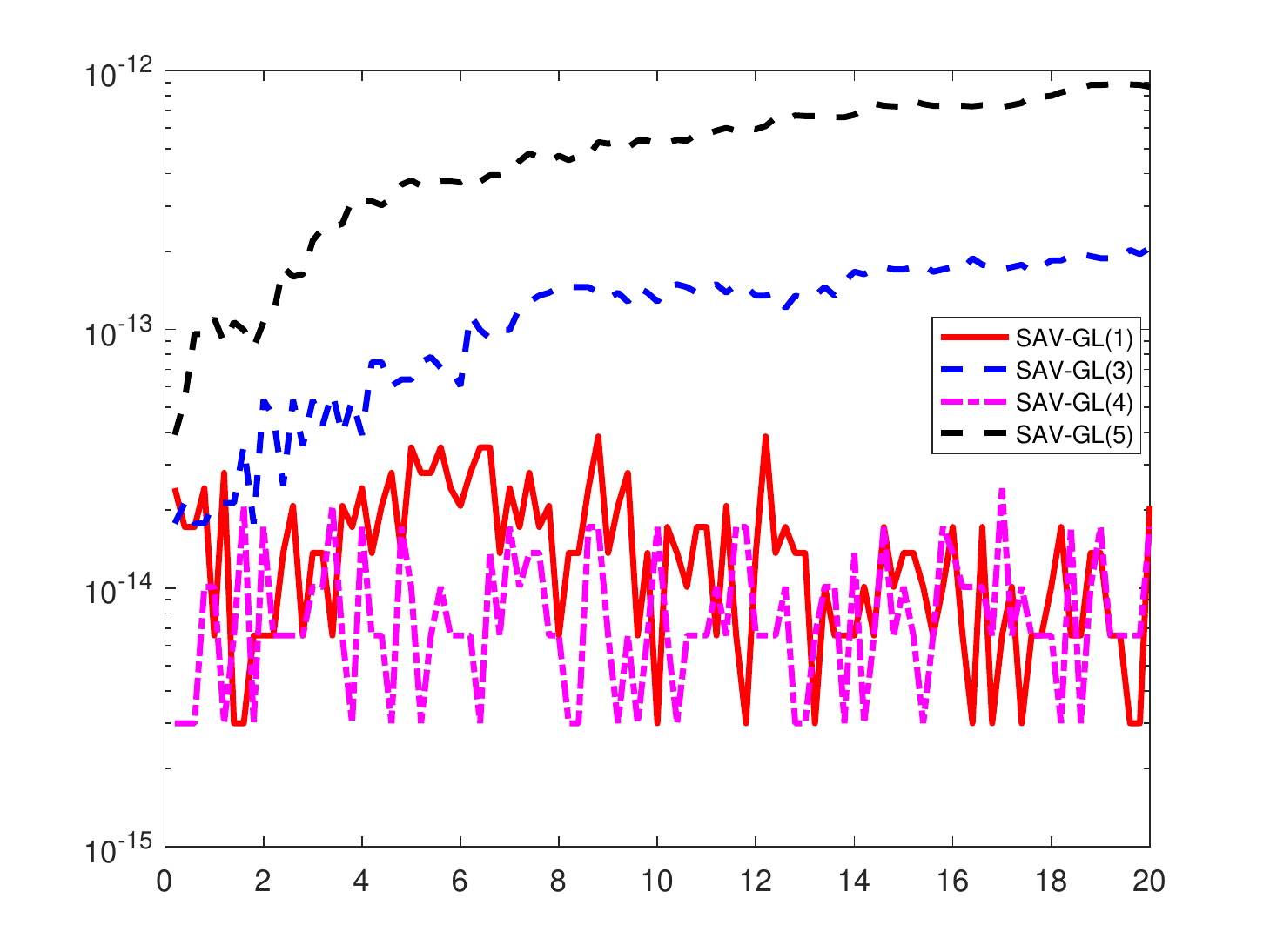}}
%\centerline{ Mass }
\caption{Example \ref{Exp5.2.2}.  {Total mass differences}  of {\tt SAV-GL(1)}, {\tt SAV-GL(3)}, {\tt SAV-GL(4)} and {\tt SAV-GL(5)} with $\tau = 0.1$.}
\label{fig5.2.3}
\end{figure}

\begin{figure}
\begin{minipage}{0.48\linewidth}
  \centerline{\includegraphics[width=7.5cm]{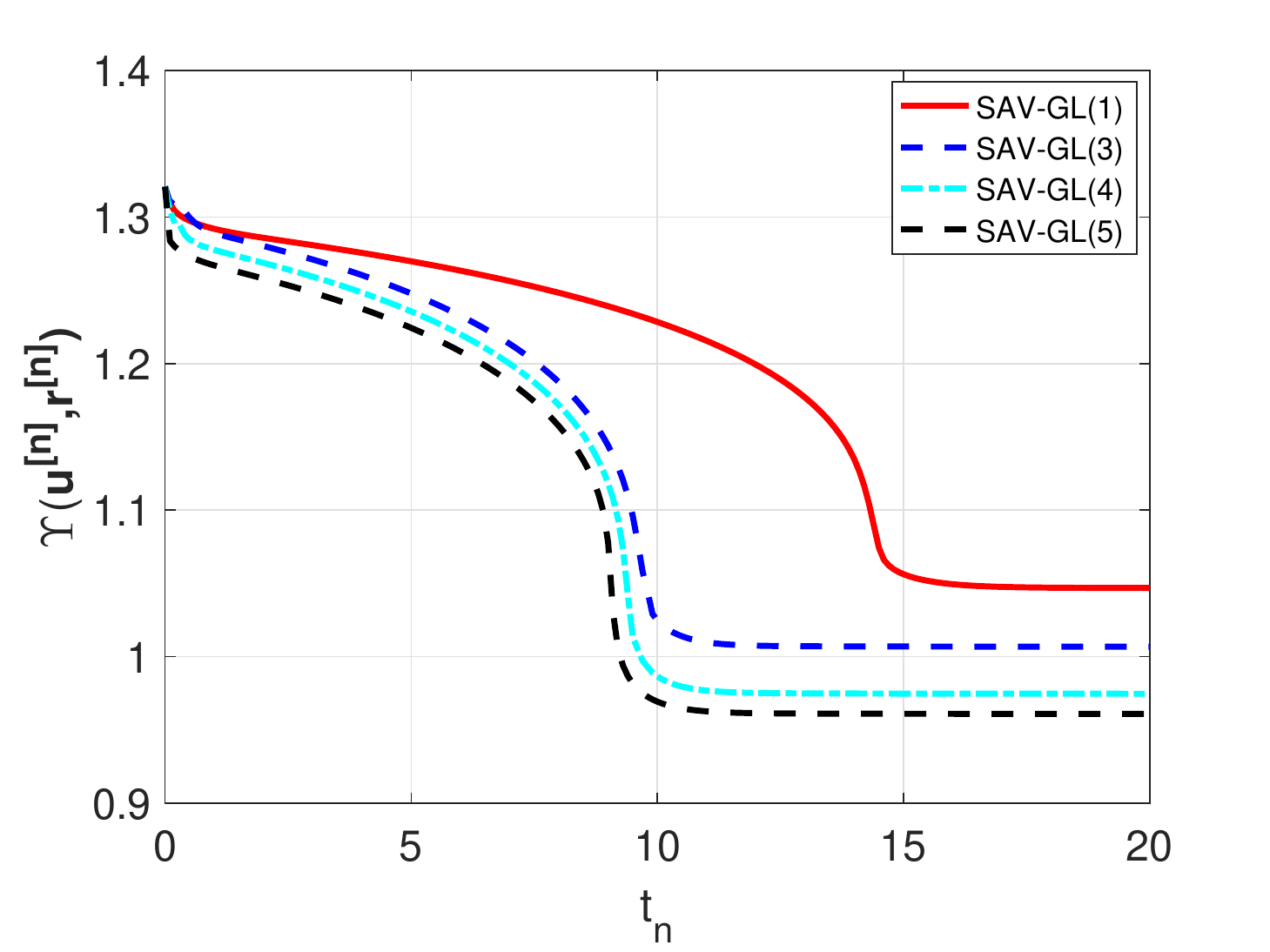}}
%  \centerline{ $\tau = 0.1$}
\end{minipage}
\hfill
\begin{minipage}{0.48\linewidth}
  \centerline{\includegraphics[width=7.5cm]{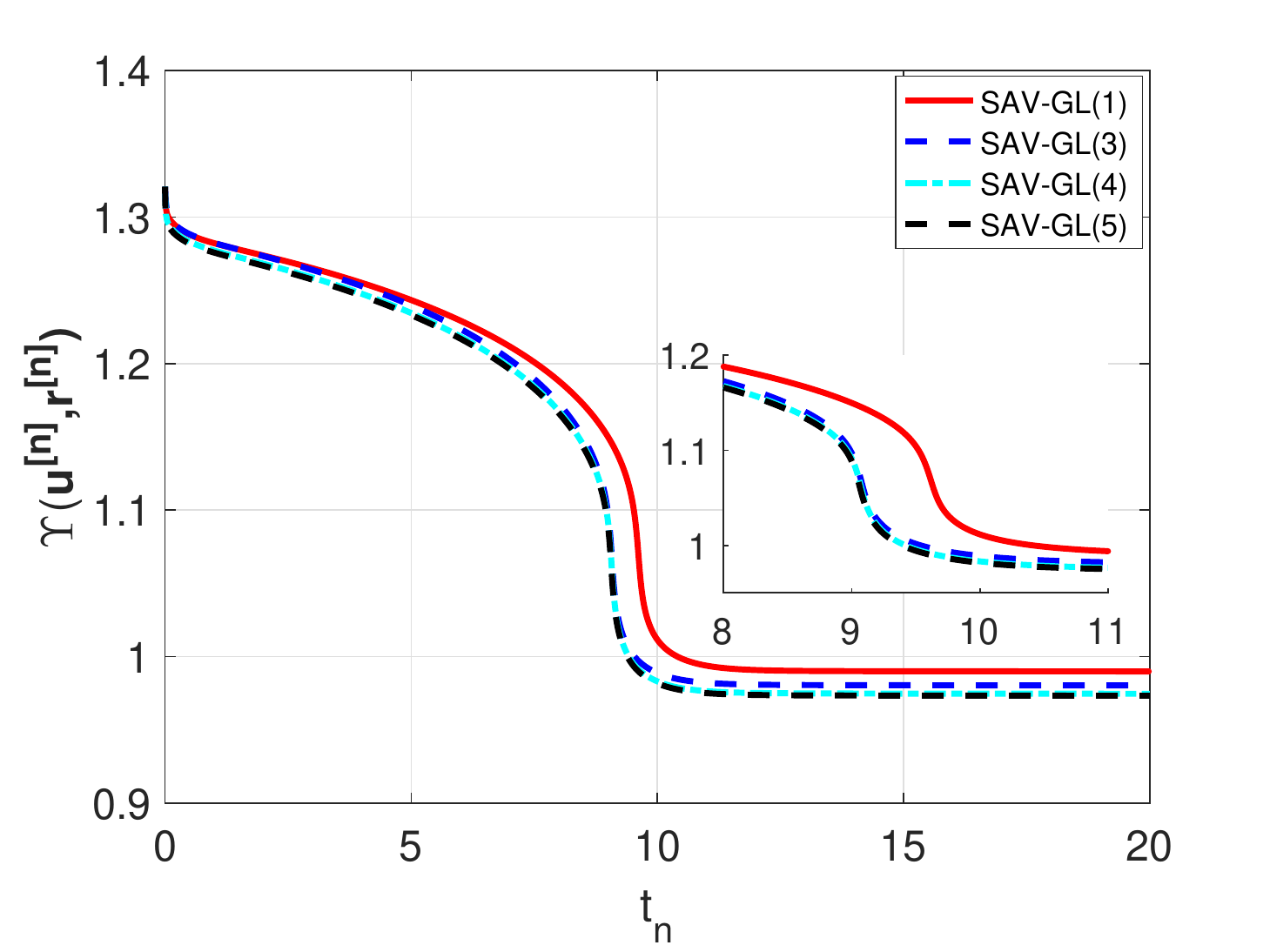}}
 % \centerline{ $\tau = 0.01$}
\end{minipage}
\caption{Example \ref{Exp5.2.2}.
Discrete energies of {\tt SAV-GL(1)}, {\tt SAV-GL(3)}, {\tt SAV-GL(4)} and {\tt SAV-GL(5)} with $\tau = 0.1$ (Left) and $0.01$ (Right).}
\label{fig5.2.4}
\end{figure}

\subsection{Phase field crystal model}   \label{sec:5.2}

The phase field crystal (PFC) model is a sixth-order nonlinear PDE
\begin{align}   \label{5.5}
\frac{\partial u}{\partial t} = \Delta\mu,~~~ \mu = u^3 - \epsilon_1 u^2 + (1- \epsilon_2) u + 2\Delta u + \Delta^2 u,
\end{align}
which can be derived from the $H^{-1}$ gradient flow of the following free energy
\begin{align*}%   \label{5.4}
\mathcal{F}(u) = \int_{\Omega} \left[ \frac{1}{4}u^4 - \frac{\epsilon_1}{3} u^3 + \frac{1-\epsilon_2}{2} u^2 - |\nabla u|^2 + \frac{1}{2}(\Delta u)^2\right] dx,
\end{align*}
where $\epsilon_1$ and $\epsilon_2$  are two non-negative constants.
This model can be used to describe many crystal phenomena such as edge dislocations \cite{Berry06}, deformation and plasticity in nanocrystalline material \cite{Stefanovic09}, fcc ordering \cite{WuK10}, epitaxial growth and zone refinement \cite{Elder02}.  When $\epsilon_1 = 0$, \eqref{5.5} becomes the classical PFC equation.

In order to apply the SAV-GL schemes for the PFC equation successfully,  the operators $\mathcal{L}$, $\mathcal{G}$ and the energy $\mathcal{F}_1(u)$  are chosen as
\[  \mathcal{L} = \alpha \Delta^2 + \beta, ~~ \mathcal{G} = \Delta,~~ \mathcal{F}_1(u) = \int_{\Omega} \left[ \frac{1}{4}u^4 - \frac{\epsilon_1}{3} u^3+ \frac{1\!-\!\epsilon_2\!-\!\beta}{2} u^2 - |\nabla u|^2 + \frac{1\!-\!\alpha}{2}(\Delta u)^2\right] dx,  \]
where $0<\alpha<1$ and $\beta\ge 0$ are two given parameters. It can be verified  that  $\mathcal{F}_1(u)$ is bounded from below, since
\begin{align*}
\mathcal{F}_1(u) =\;& \int_{\Omega} \left[ \frac{1}{4}u^4 - \frac{\epsilon_1}{3} u^3 + \frac{1\!-\!\epsilon_2\!-\!\beta}{2} u^2 + u\Delta u + \frac{1\!-\!\alpha}{2}(\Delta u)^2\right] dx   \\[2 \jot]
\ge \;& \int_{\Omega} \left[ \frac{1}{4}u^4 - \frac{\epsilon_1}{3} u^3 + \frac{1\!-\!\epsilon_2\!-\!\beta}{2} u^2  - \frac{1}{2(1\!-\!\alpha)} u^2\right] dx,
\end{align*}
where  the inequality $ab\ge -\frac{1}{2\varepsilon}a^2 - \frac{\varepsilon}{2}b^2$, $\varepsilon>0$ is used.
Let us apply {\tt SAV-GL(1)}$\sim${\tt SAV-GL(6)}
 to  the PFC model \eqref{5.5} in order to validate the energy stability and accuracy.

\begin{example} \label{Exp5.3.1}

This example checks the accuracy of {\tt SAV-GL(1)}$\sim${\tt SAV-GL(6)}
for the PFC model \eqref{5.5}. The parameters are taken as $\epsilon_1 = 0$, $\epsilon_2 = 0.5$, $\alpha = 0.99$ and $\beta = 4$, the initial data are $u(x,y,0) = 0.4\sin(x)\cos(y)$, and {\tt SAV-GL(6)} with $\tau = 5\times10^{-5}$ is used to generate the reference solution for computing the $L^2$ errors.
Table \ref{tab:5.3} shows the $L^2$ errors of {\tt SAV-GL(1)}$\sim${\tt SAV-GL(6)} at $t=0.1$ and corresponding convergence rates  with different time stepsizes.
It is seen that  the numerical accuracies of {\tt SAV-GL(1)}$\sim${\tt SAV-GL(4)} are consistent with the theoretical, and {\tt SAV-GL(5)} (resp. {\tt SAV-GL(6)}) with the number of extrapolation points $\nu = 3$ (resp. $\nu=4$) reaches the third-order (resp. fourth-order) accuracy for the PFC model \eqref{5.5}, which validates the statement in Remark \ref{remark4.6}.

\end{example}

\begin{table}[!htbp]
\begin{center}
\caption{Example \ref{Exp5.3.1}. $L^2$ errors of {\tt SAV-GL(1)}$\sim${\tt SAV-GL(6)} at $t = 0.1$ and corresponding convergence rates. }
\begin{tabular*}{\textwidth}{@{\extracolsep{\fill}}lccccccccccc}
\toprule[1pt]
{}&\multicolumn{2}{c}{{\tt SAV-GL(1)}}&{} & \multicolumn{2}{c}{{\tt SAV-GL(2)}} & {} & \multicolumn{2}{c}{{\tt SAV-GL(3)}} & \\
\cmidrule[0.5pt]{2-3}\cmidrule[0.5pt]{5-6} \cmidrule[0.5pt]{8-9}
{$K$} & Errors & Orders &  {} & Errors & Orders  &  {} & Errors & Orders   \\
\midrule[0.5pt]
$240$        &4.3324e-04     &--          &{}      &3.2234-06     &--       &{}      &4.7791e-06     &--       \\
$280$       &3.7131e-04     &1.0007     &{}      &2.3687e-06     &1.9987   &{}      &3.5115e-06     &1.9995     \\
$320$       &3.2487e-04     &1.0006      &{}      &1.8138e-06     &1.9989  &{}      &2.6887e-06     &1.9995      \\
$360$       &2.8875e-04     &1.0005      &{}      &1.4333e-06     &1.9990  &{}      &2.1245e-06     &1.9995      \\
$400$       &2.5987e-04     &1.0005      &{}      &1.1610e-06     &1.9991   &{}      &1.7209e-06     &1.9995     \\
\toprule[1pt]
{}&\multicolumn{2}{c}{{\tt SAV-GL(4)}}&{} & \multicolumn{2}{c}{{\tt SAV-GL(5)}} & {} & \multicolumn{2}{c}{{\tt SAV-GL(6)}} & \\
\cmidrule[0.5pt]{2-3}\cmidrule[0.5pt]{5-6} \cmidrule[0.5pt]{8-9}
{$K$} & Errors & Orders &  {} & Errors & Orders  &  {} & Errors & Orders   \\
\midrule[0.5pt]
$240$        &8.8351e-07    &--          &{}      &1.1427e-09     &--       &{}      &4.4513e-10     &--       \\
$280$       &6.4931e-07    &1.9980      &{}      &7.1376e-10     &3.0528   &{}      &2.4953e-10     &3.7546    \\
$320$       &4.9724e-07     &1.9982      &{}      &4.7372e-10    &3.0700   &{}      &1.4818e-10     &3.9028    \\
$360$       &3.9296e-07     &1.9984     &{}      &3.2929e-10     &3.0877   &{}      &9.3183e-11    &3.9384     \\
$400$       &3.1835e-07     &1.9985     &{}      &2.3740e-10    &3.1055
&{}      &6.1172e-11     &3.9947     \\
\bottomrule[1pt]
\end{tabular*}        \label{tab:5.3}
\end{center}
\end{table}

\begin{example} \label{Exp5.3.2}
This example is used to simulate the polycrystal growth in a supercool liquid by solving \eqref{5.5}. In this case, $ \epsilon_1 = 0$, $\epsilon_2 = 0.25 $, $\alpha = 0.8$ and $\beta = 0$, the spatial stepsize $h$ is $1$ for  the domain $\Omega = [0,400]\times[0,400]$, and the time stepsize $\tau$ is taken as $0.1$ or $0.01$. The initial value $u_0$ is fixed to be a constant value $\phi_0 =0.285$  firstly and then modified by setting three crystallites in three small square patches of the domain, where the centers of three crystallites are located at $(150,150), (200,250)$ and $(250,150)$, respectively, the length of each path is $40$, and the three crystallites are defined by the following expression (see e.g. \cite{YangX17e,LiuZ21})
\[ u(x_l,y_l) = \phi_0  + B\left[\cos\left(\frac{\vartheta}{\sqrt{3}}y_l\right) \cos\left(\vartheta x_l\right) - \frac{1}{2} \cos\left(\frac{2\vartheta}{\sqrt{3}}y_l\right) \right], \]
with $ B = 0.446$, $\vartheta = 0.66$, and local
coordinates $x_l, y_l$ given by
\[ x_l(x,y) = x\sin(\theta) + y\cos(\theta) ,~~~y_l(x,y) = -x\cos(\theta) + y\sin(\theta), ~~\mbox{for}~~\theta = \frac{\pi}{4}, 0, -\frac{\pi}{4}. \]

Figure \ref{fig5.3.1} gives the cut lines and contour lines of the numerical solutions at $t=150$ derived by {\tt SAV-GL(1)}, {\tt SAV-GL(3)}, {\tt SAV-GL(4)} and {\tt SAV-GL(5)} with $\tau=0.1$, from which one can see that the numerical solutions computed by those schemes are quite similar.
 Figure \ref{fig5.3.3} shows the differences between the total masses at $t_n$ and $t_0$ obtained by {\tt SAV-GL(1)}, {\tt SAV-GL(3)}, {\tt SAV-GL(4)} and {\tt SAV-GL(5)} with $\tau = 0.1$, which checks the mass conservation numerically.
 Figure \ref{fig5.3.2} gives the snapshots of the numerical solutions at $t = 0, 70, 150, 300, 450$ and  $1000$ computed by using {\tt SAV-GL(5)} with $\tau = 0.1$. It can be seen that the three different crystal grains grow and become large enough to form grain boundaries finally.
Figure \ref{fig5.3.4} displays the discrete energy curves of {\tt SAV-GL(1)}, {\tt SAV-GL(3)} and {\tt SAV-GL(5)} with $\tau = 0.1$ and $0.01$, which indicates that those schemes are energy stable in solving the PFC model \eqref{5.5}.  The discrete energy curves of those schemes have some visible differences with $\tau=0.1$, but the differences are almost indistinguishable for $\tau=0.01$.  It means that  {\tt SAV-GL(5)} may have some advantages to get the accurate steady solution of the PFC model \eqref{5.5} when  a large time stepsize is taken.

\end{example}

\begin{figure}
\begin{minipage}{0.48\linewidth}
  \centerline{\includegraphics[width=7.5cm]{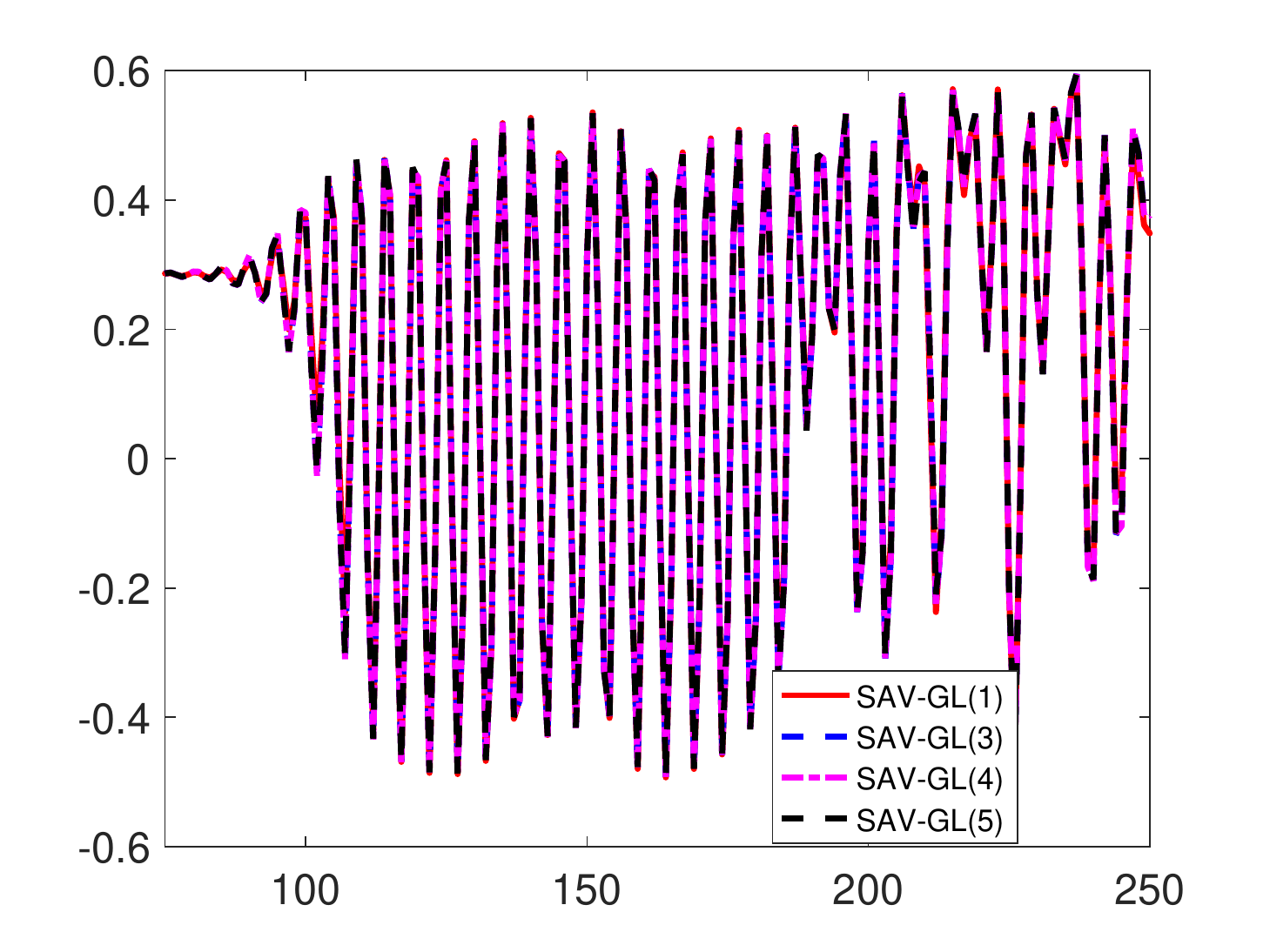}}
  \centerline{ (a)}
\end{minipage}
\hfill
\begin{minipage}{0.48\linewidth}
  \centerline{\includegraphics[width=7.5cm]{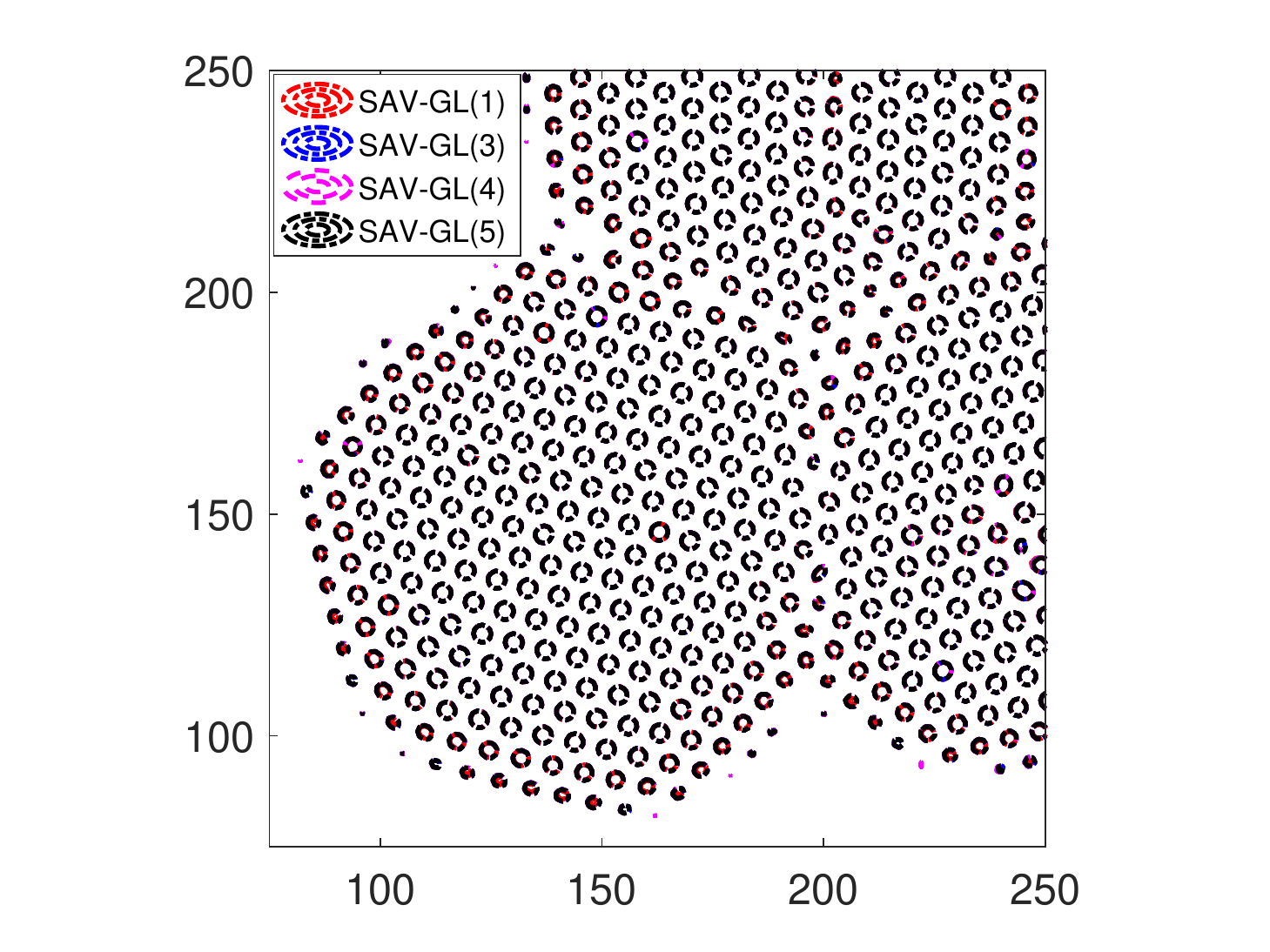}}
  \centerline{ (b)}
\end{minipage}
\caption{Example \ref{Exp5.3.2}. Left:
cut lines of the numerical solutions at $t=150$ along $y=x~(x\in[75,250])$; %(x\in[0,400])$;
right: contour lines of $u=0.1$ at $t=150$.}
\label{fig5.3.1}
\end{figure}

\begin{figure}
\includegraphics[width=16cm,height=9cm]{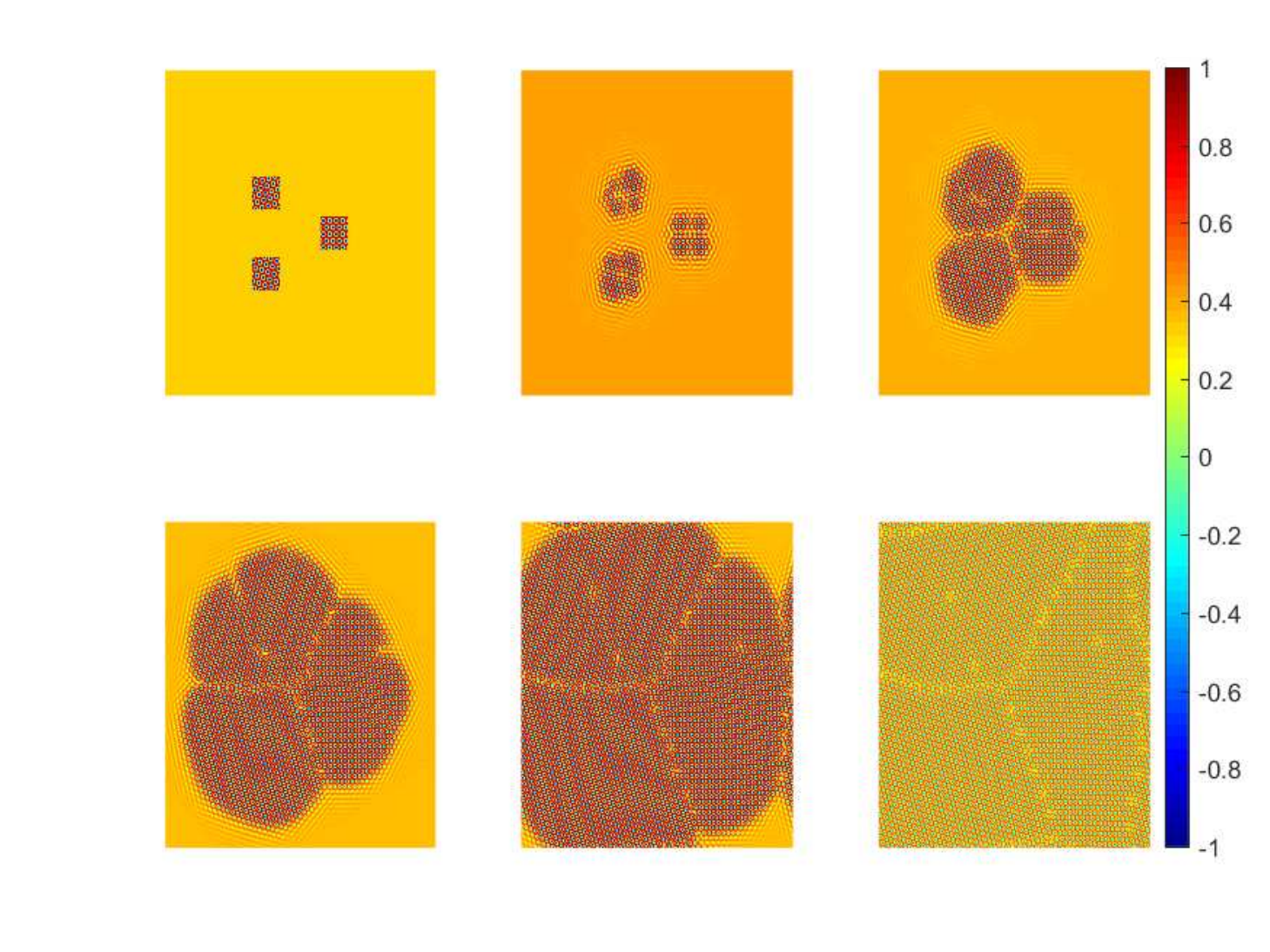}
\caption{Example \ref{Exp5.3.2}.   Snapshots of the numerical solutions at $t = 0, 70, 150, 300, 450$ and  $1000$ computed by using {\tt SAV-GL(5)} with $\tau = 0.1$.}
\label{fig5.3.2}
\end{figure}

\begin{figure}
\centerline{\includegraphics[width=12cm,height=7.5cm]{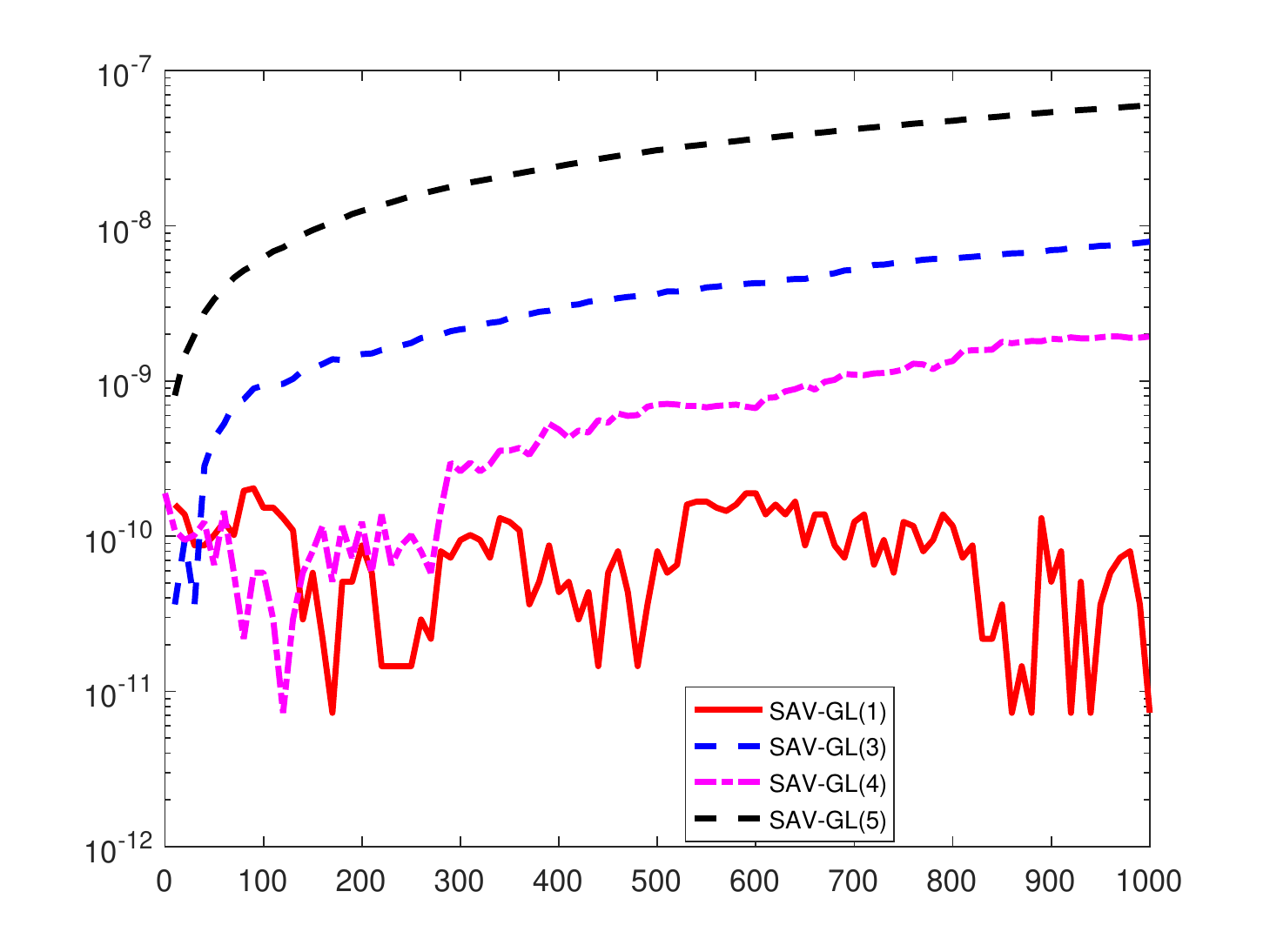}}
%\centerline{ Mass }
\caption{Example \ref{Exp5.3.2}. {Total mass differences}  of {\tt SAV-GL(1)}, {\tt SAV-GL(3)}, {\tt SAV-GL(4)} and {\tt SAV-GL(5)} with $\tau = 0.1$.}
\label{fig5.3.3}
\end{figure}

\begin{figure}
\begin{minipage}{0.48\linewidth}
  \centerline{\includegraphics[width=7.5cm]{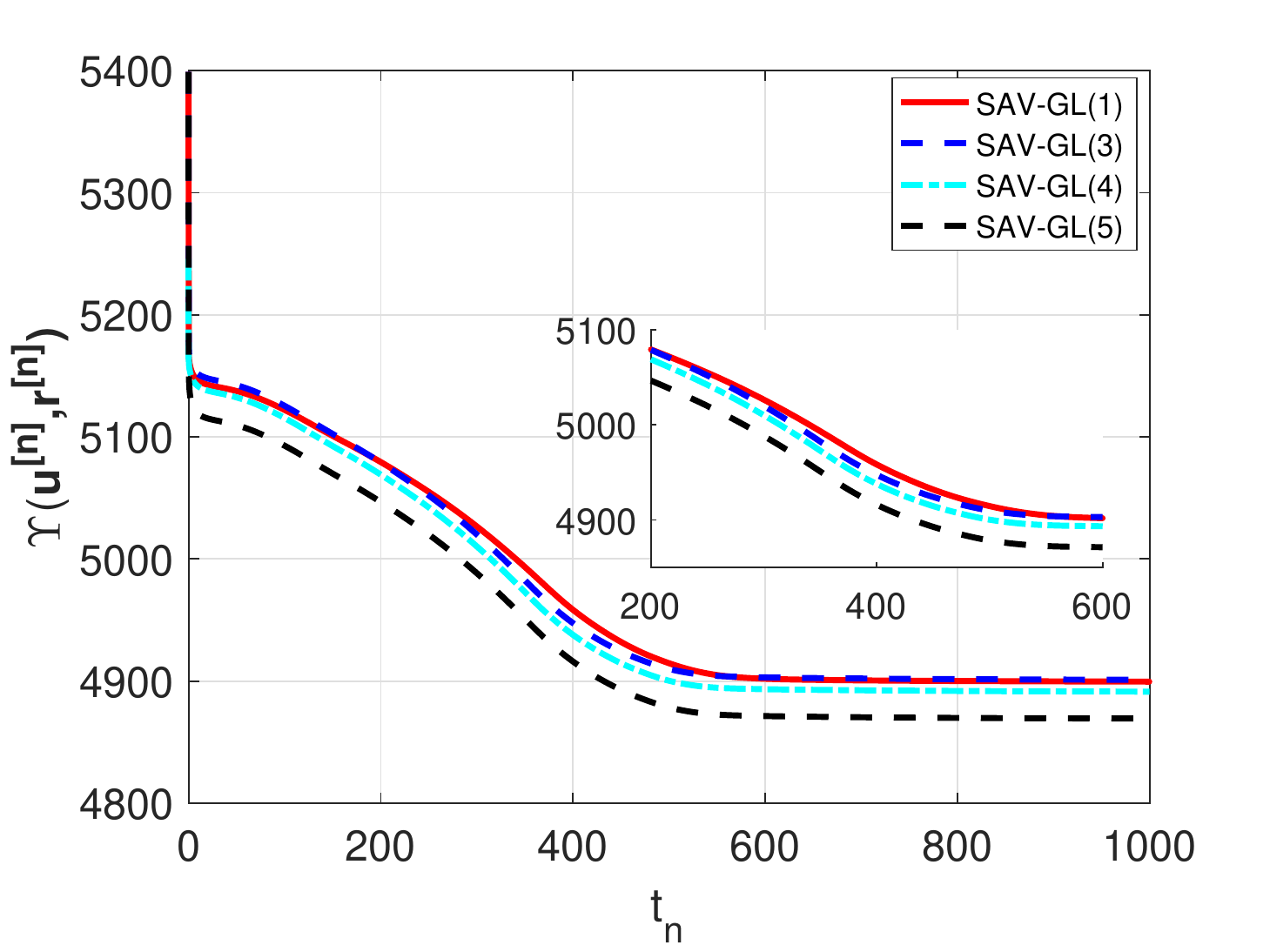}}
 % \centerline{ $\tau = 0.1$}
\end{minipage}
\hfill
\begin{minipage}{0.48\linewidth}
  \centerline{\includegraphics[width=7.5cm]{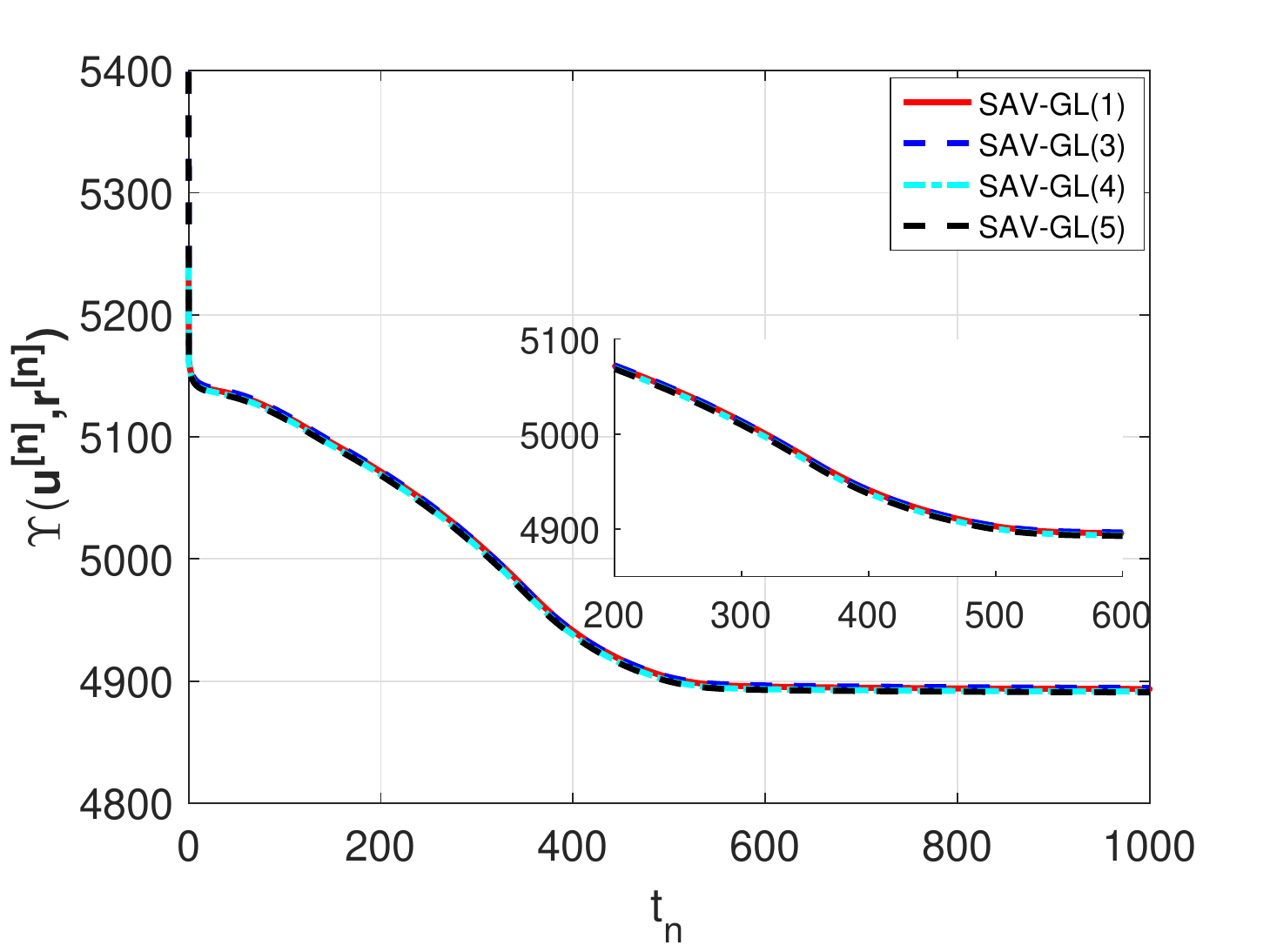}}
 % \centerline{ $\tau = 0.01$}
\end{minipage}
\caption{Example \ref{Exp5.3.2}.
Discrete energies of {\tt SAV-GL(1)}, {\tt SAV-GL(3)}, {\tt SAV-GL(4)} and {\tt SAV-GL(5)} with $\tau = 0.1$ (Left) and $0.01$ (Right).}
\label{fig5.3.4}
\end{figure}

\section{Conclusions}   \label{sec:7}

This paper proposed a general class of linear and unconditionally energy stable numerical schemes for the gradient flows by using the SAV  and the general linear time discretizations (GLTDs).
Those SAV-GL schemes could reach arbitrarily high-order accuracy in time, and only a coupled system of linear equations was solved at each time step since the nonlinear terms of the reformulated SAV equations were  linearized based on extrapolation.
Importantly, the resulting SAV-GL schemes contained most of the time integration schemes for the gradient flows in literature and many new schemes.
The semi-discrete-in-time  SAV-GL schemes were proved to be unconditionally energy stable
when the GLTD was algebraically stable,
and to be  convergent with the order of $\min\{\hat{q},\nu\}$
under the diagonal stability and some suitable regularity and    accurate  starting values,
where $\hat{q}$ was the generalized stage order of the GLTD and $\nu$ denoted the number of the extrapolation points in time.
As two typical examples, the so-called one-leg and multistep Runge-Kutta (MRK) time integration schemes were considered for the gradient flows and their energy stabilities and error estimates were discussed  separately.
Because the proof of the energy stability   was variational and the energy stability was available for the boundary conditions which made all boundary terms  disappear when the integration by parts was performed,
the above energy stability could be straightforwardly extended to the fully discrete SAV-GL schemes with the Galerkin finite element  or  the spectral methods  or  the finite
difference methods, satisfying the summation by parts for the spatial discretization.

In order to demonstrate numerically the energy stability and accuracy of the SAV-GL schemes,
the fully discrete SAV-GL schemes with the Fourier spectral spatial discretization were presented
for three gradient flows equations (the Allen-Cahn, Cahn-Hilliard and phase field crystal models) with periodic boundary conditions.
%were numerically solved by using several fully discrete SAV-GL schemes (abbreviated as {\tt SAV-GL(1)}$\sim${\tt SAV-GL(6)} in Section \ref{sec:6}).
 Our numerical experiments well demonstrated the theoretical results of {\tt SAV-GL(1)}$\sim${\tt SAV-GL(6)}
and also checked the discrete maximum principle  for the Allen-Cahn model and the mass conservation for the Cahn-Hilliard and  phase field crystal models. %, which were difficult to prove theoretically.
They also showed that
 the high-order SAV-GL schemes such as {\tt SAV-GL(5)} might have some obvious advantage
to derive the accurate steady solutions for the three gradient flow equations when taking a large time stepsize,
% On the other hand, Theorem \ref{Thm4.8} showed that
and the SAV-GL scheme \eqref{3.2.13}-\eqref{3.2.15} with the integer $s\ge 2$  and the number of extrapolation points $\nu = s+1$ was convergent with the order of $s+1$.
%However, in practical computations, an additional extrapolation point was added, i.e., $\nu = s+1$,
%which would lead to $(s+1)$st-order accuracy. That was a good and efficient choice for the SAV-GL scheme \eqref{3.2.13}-\eqref{3.2.15} with the integer $s\ge 2$.

Besides the one-leg and MRK time integration schemes considered in this paper, other time discretizations,  such as the diagonally implicit multistage integrations (see e.g. \cite{Izzo14,Butcher93}) and the general class of two-step Runge-Kutta methods (see e.g. \cite{Dambrosio12,Jackiewicz95}) etc.,
can be reformulated as the form of the GLTDs \eqref{2.1.2}. Those methods may also possess good energy stability and accuracy for the gradient flows.
In future, we will further address several interesting topics on numerical schemes for
 the gradient flows:
exploring higher-order one-leg schemes with the aid of the novel SAV approach  \cite{HuangF20},
estimating the errors of the fully discrete SAV-GL schemes,
and combining the present SAV-GL schemes with the adaptive moving mesh method \cite{Zhang-Tang2007} for the mixture of two incompressible fluids etc.

%% Acknowledgments %%%%%%%%
\section*{Acknowledgments}
%{The authors would like to thank the anonymous referees for their valuable comments which helped them to improve the paper.}
The authors were partially supported by %the National Numerical Windtunnel project,
the National Key R\&D Program of China (Project Number 2020YFA0712000) and
the National Natural Science Foundation of China (No. 12126302 \& 12171227).
%and High-performance Computing Platform of Peking University.

\end{document}